\numberwithin{equation}{section}
\numberwithin{figure}{section}
\numberwithin{table}{section}
\newtheorem{claim}{\bf \t}[part]
\newtheorem{theorem}{Theorem}[section]
\newtheorem{lemma}{Lemma}[section]
\newtheorem{proposition}[theorem]{Proposition}
\newtheorem{remark}{Remark}[section]
\def\vep{\varepsilon}
\def\ep{\epsilon}
\def\pt{\partial}
\def\br{\bar{\rho}}
\def\bu{\bar{u}}
\def\div{\mathrm{div}}
\def\mI{\mathrm{I}}
\def\Du{\mathfrak{D}(u)}
\def\Dtu{\mathfrak{D}(\tu)}
\def\Dpsi{\mathfrak{D}(\psi)}
\def\tC{{\widetilde{C}}}
\def\ttau{\widetilde{\tau}}
\def\fT{\mathcal{T}}
\def\tfT{\widetilde{\fT}}
\def\Nhat{\widehat{N}}
\def\tW{\widetilde{W}}
\def\trho{{\widetilde{\rho}}}
\def\tu{\widetilde{u}}
\def\tthe{{\widetilde{\theta}}}
\def\mup{\mu^\prime}
\def\nup{\nu^\prime}
\def\Ma{\mbox{Ma}}
\def\Re{\mbox{Re}}
\def\Pr{\mbox{Pr}}
\def\bfT{\bar{\fT}}
\newcommand \R{\mathbb{R}}
\begin{document}

\title[]
{Global existence and low mach number limit of strong solutions to the full compressible Navier-Stokes equations around the plane Couette flow}
\author{Tuowei Chen}
\address{Tuowei Chen, Institute of Applied Physics and Computational Mathematics, 100088 Beijing, P. R. China;}
\email{tuowei\_chen@163.com}
\author{Qiangchang Ju}
\address{Qiangchang Ju, Institute of Applied Physics and Computational Mathematics, 100088 Beijing, P. R. China;}
\email{ju\_qiangchang@iapcm.ac.cn}

\date{\today}

\begin{abstract}
In this paper, we study the global existence and low Mach number limit of strong solutions to the 2-D full compressible Navier-Stokes equations around the plane Couette flow in a horizontally periodic layer with non-slip and isothermal boundary conditions. It is shown that the plane Couette flow is asymptotically stable for sufficiently small initial perturbations, provided that the Reynolds number, Mach number and temperature difference between the top and the lower walls are small. For the case that both the top and the lower walls maintain the same temperature, we further prove that such global strong solutions converge to a steady solution of the incompressible Navier-Stokes equations as the Mach number goes to zero. 
\end{abstract}
\keywords{Plane Couette flow, full compressible Navier-Stokes equations, low Mach number limit, stability.}
\subjclass[2020]{
	35Q30; 
	35M33; 
	35B40; 
	76N15. 
}

\maketitle

\section{Introduction}
This paper is concerned with the motion of viscous compressible gas flow between two parallel walls that are separated by a distance $h$, where the top wall moves with a constant speed $V_{1}$ and the lower wall is stationary, with the temperatures of the top and the lower walls being maintained at $\fT_1>0$ and $\fT_b>0$, respectively. The gas flow is governed by the full compressible Navier-Stokes equations
\begin{equation}
	\left.\begin{cases}
		\pt_t\rho+\div(\rho u)=0,\\
		\pt_t(\rho u)+ \div(\rho u\otimes u)+\nabla P= \div\mathbf{\tau},\\
		\pt_t(\rho E)+\div\big((\rho E+P) u\big)= \div q+\div(\mathbf{\tau} u)
	\end{cases}\label{2DCNS}
	\right.
\end{equation}
in a two-dimensional infinite layer $\Omega_{h}=\R\times(0,h)$:
\begin{align}
	\Omega_{h}=\{x=(x_1,x_2):x_1\in\R,\,0<x_2<h\}.
\end{align}
Here, $\rho(x,t)$, $u(x,t)=(u^1(x,t),u^2(x,t))^\top$ and $\fT(x,t)$ are unknowns and represent the gas density, velocity and absolute temperature, respectively, at time $t\geq0$, and position $x\in\Omega_h$; $P$ is the gas pressure and $E=e+\frac{1}{2}|u|^2$ is the specific total energy, where $e$ is the specific internal energy. $\mathbf{\tau}$ and $q$ are the viscous stress tensor and the heat flux vector, respectively, that are given by
\begin{align}
	\mathbf{\tau}=2\nu\Du+\nup\div u \mI,\,\,\,\,\,\, q=\lambda\nabla \fT,
\end{align} 
where $\Du:=\frac{1}{2}\big(\nabla u+ (\nabla u)^\top\big)$ is the deformation tensor, $\mI$ denotes the identity matrix, $\nu$ and $\nup$ are the viscosity coefficients that are assumed to be constants and satisfy
\begin{align}
\nu>0,\,\,\,\,\,\nu+\nup>0;
\end{align}
and $\lambda$ is the heat conductivity coefficient that is assumed to satisfy (cf. \cite{MDA2008,Xu2001})
\begin{align}
	\lambda =\frac{C_p\nu}{\Pr}.   \label{kappa}
\end{align}
In the above formula, $\Pr$ and $C_p$ are positive constants standing for the Prandtl number and the specific heat at constant pressure, respectively. We study the ideal polytropic gas so that $C_p$ satisfies
\begin{align}
	C_p=\frac{\gamma}{\gamma-1}R,
\end{align}
and $e$ and $P$ are given by the state equations
\begin{align}
	e=e(\fT)=\frac{R\fT}{\gamma-1}, \quad P=P(\rho,\fT)=R\rho\fT,
\end{align}
where constant $\gamma>1$ is the ratio of the specific heats and $R$ is the universal gas constant. Without loss of generality, in this paper we assume that $R=1$. 

The boundary conditions over the top and lower walls read 
\begin{equation}
	\left.\begin{cases}
	u^1(x_1,h,t)=V_{1}, \,\,\, &u^1(x_1,0,t)=0, \\
	u^2(x_1,h,t)=0, \,\,\,\,\,  &u^2(x_1,0,t)=0, \\
	\fT(x_1,h,t)=\fT_1, \,\,\, &\fT(x_1,0,t)=\fT_b.
	\end{cases}\label{boundarycon}
\right.
\end{equation}
We also require periodicity of $W:=(\rho, u,\fT)$ in $x_1$-direction:
\begin{align}
	W(x_1+2\pi h k,x_2,t)=	W(x_1,x_2,t),\,\,\,\,\,\forall k\in \mathbb{Z}. \label{boundarycon1}
\end{align}
Here, without loss of generality, the length of the basic periodic cell is set as $2\pi h$.

It is easily seen that the system \eqref{2DCNS}--\eqref{boundarycon1} has a steady solution $\overline{W}$ satisfying
\begin{equation}
\begin{aligned}
&\overline{W}=(\br,\bu,\bfT)^\top,\\
&\bfT=\fT_b+(\fT_1-\fT_b)\frac{x_2}{h}-\frac{\nu V_{1}^2}{2\lambda}\left[\left(\frac{x_2}{h}\right)^2-\frac{x_2}{h}\right],\\
&\br=p_{1}(\bfT)^{-1},\,\,\,\,\, \bu^1=V_{1}\frac{x_2}{h}, \,\,\,\,\, \bu^2=0,
\end{aligned}    \label{Couette}
\end{equation}
where $p_{1}$ is a positive constant standing for the steady pressure. $\overline{W}$ is driven by the speed and temperature differences between the top and the lower walls, and is known as the plane Couette flow; see \cite{MDA2008}. In this paper, we assume that $p_{1}=1$.

Based on the reference quantities from the steady solution $\overline{W}$ and the distance between the two walls, the Mach number $\Ma$ and Reynolds number $\Re$ are defined as
\begin{align}
	\Ma= \frac{V_{1}}{\sqrt{\gamma \fT_1}},\,\,\,\,\, \Re=\frac{\rho_{1}V_{1}h}{\mu},    \label{ReMa}
\end{align}
where
\begin{align}
	 \rho_{1}:=\br|_{x_2=h}=\frac{1}{\fT_1}.
\end{align}
Moreover, the temperature ratio between the lower and the top walls is defined as
\begin{align}
	\chi=\frac{\fT_b}{\fT_1}.    \label{Tdiffratio}
\end{align}
We point out that the plane Couette flow $\overline{W}$ has been widely used as a benchmark for low Mach number viscous and heat-conductive flows in the context of computational fluid dynamics; see \cite{Xu2001,KW2010,CYLLL2016}.

There have been many works on the plane Couette flow of incompressible fluids in the literature. It is well known that the plane Couette flow of the incompressible Navier–Stokes equation is in general stable under any initial perturbation in $L^2$ if the Reynolds number $Re$ is sufficiently small. At high Reynolds number regime, Romanov \cite{R1973} first proved that the plane Couette flow of the incompressible Navier–Stokes equations is stable for any Reynolds number $Re>0$ under sufficiently small disturbances. More recent progress on such incompressible plane Couette flows can be found in \cite{MZ2020,WZZ2018,WZZ2019,WZ2021} and the references therein.

Here we are mainly concerned with the plane Couette flow of the compressible Navier-Stokes equations. In the isentropic regime, Iooss-Padula \cite{IP1998} investigated the linear stability of stationary parallel flow in a cylindrical domain. When the nonlinear effect is concerned with, Kagei \cite{K2011} proved that the plane Couette flow in an infinite layer is asymptotically stable for sufficiently small initial disturbances if the Reynolds and Mach numbers are sufficiently small, and showed the asymptotic behavior of the disturbances as well. Later, Kagei \cite{K2011JDE,K2012} extended this result to general parallel flows. Li and Zhang \cite{LZ2017} studied the stability of plane Couette flow in an infinite layer with Navier-slip boundary condition at the bottom boundary. Readers are referred to \cite{KN2015,KN2019,KT2020} for further results on plane Poiseuille flows and Couette flows between two concentric cylinders. In the non-isentropic regime, Duck et al. \cite{DEH1994} investigated the linear stability of the plane Couette flow for the full compressible Navier-Stokes equations. Recently, Zhai \cite{Z2021} studied the linear stability of the Couette flow for the non-isentropic compressible Navier-Stokes equations with vanished shear viscosity. However, to the best of our knowledge, there is few result on the nonlinear stability of the plane Couette flow for the full compressible Navier-Stokes equations.

Physically, as the Mach number vanishes, the behaviors of compressible fluid flows would tend to the incompressible ones (cf. \cite{Lions1996}). Mathematically, this is a singular limit. The rigorous justification of this limit process has been studied extensively since the pioneer work by Klainerman and Majda \cite{KM1981,KM1982} for local strong solutions of compressible fluids (Naiver-Stokes or Euler). Here we focus more specifically on the study of the low Mach number limit of the compressible Navier-Stokes equations. Alazard \cite{A2006} studied the low Mach number limit of the full Navier-Stokes equations in the whole space. For boundary-value problems, the researches of low Mach number limit problem in a bounded domain with the slip type boundary conditions are quite fruitful. For the case with vorticity-slip boundary conditions, see the studies by Ou \cite{O2009}, by Ou and Yang \cite{OY2022}, and by Ju and Ou \cite{JO2022}. For the case with Navier-slip boundary conditions, see the works by Ren and Ou \cite{RO2016}, by Masmoudi et al. \cite{MRS2022}, and by Sun \cite{S2022}. 

It is particularly interesting and more difficult to study the low Mach number problem in a bounded domain with non-slip boundary conditions, where the terms containing normal derivatives of the velocity need to be treated carefully due to boundary effects. Bessaih \cite{B1995} studied the low Mach number behavior of local strong solutions to the compressible Navier-Stokes equations in a bounded domain with non-slip boundary conditions. Later, Jiang and Ou \cite{JO2011} extended this result to the case for the non-isentropic Navier-Stokes equations with zero thermal conductivity coefficient. Related studies in the weak solution framework can be found in \cite{LM1998,DGLM1999}. Very recently, Fan et al. \cite{FJX2023} proved the long time existence of the slightly compressible isentropic Navier-Stokes equations in bounded domains with non-slip boundary conditions. We comment that the above-mentioned studies of strong solutions are based on an asymptotic expansion for the solution around a motionless constant state, i.e., $(\rho,u,T)=(1,0,1)$. It is more attractive to study the low Mach limit of solutions to the compressible Navier-Stokes equations around a steady flow with nonzero velocity, e.g., the Plane Couette flow \eqref{Couette}. We also note that the Plane Couette flow \eqref{Couette} is an important exact steady solution to the full compressible Navier-Stokes equations, where both velocity and temperature enjoy inhomogeneous Dirichlet type boundary conditions. To the best of our knowledge, there is no result on the low Mach limit of solutions to the full compressible Navier-Stokes equations around the Plane Couette flow. In addition, we mention an interesting work by Huang et al. \cite{HWW2017} that the solutions of 1-D full compressible Navier–Stokes equations with different end states converge to a nonlinear diffusion wave solution globally in time as the Mach number goes to zero.

The purpose of this paper is twofold: (i) to study the global in time stability of the Couette flow $\overline{W}$ with small initial perturbations and the small Mach number; (ii) to study the low Mach number limit of the global strong solutions around the Couette flow $\overline{W}$.

To this end, we derive a non-dimensional form of system \eqref{2DCNS}--\eqref{boundarycon1} as follows. We introduce the non-dimensional variables:
\begin{equation}
	x=h\hat{x}, \,\, t=\frac{h}{V_{1}}\hat{t}, \,\, 
	u=V_{1}\hat{u},\,\,  \rho=\rho_{1}\hat{\rho},\,\, 
	\fT= T_{1}\hat{\fT},   \label{transform}
\end{equation}
Under the transformation \eqref{transform}, system \eqref{2DCNS} on $\Omega_{h}$ with boundary conditions \eqref{boundarycon}--\eqref{boundarycon1} are written, by omitting hats, as
\begin{equation}
	\left.\begin{cases}
		\pt_t\rho+\div(\rho u)=0,\\
		\rho(\pt_t u+ u\cdot\nabla u)+\ep^{-2}\nabla P(\rho,\fT)= \mu\Delta u+(\mu+\mup)\nabla\div u,\\
		\frac{\rho}{\gamma-1}(\pt_t \fT+ u\cdot\nabla \fT)+P(\rho,\fT)\div u
		=\kappa\Delta\fT+\ep^2\big(2\mu|\Du|^2+\mup(\div u)^2\big),
	\end{cases}\label{2DCNS0}
	\right.
\end{equation}
on a horizontally periodic domain $\Omega=\R/(2\pi\mathbb{Z})\times (0,1)$:
\begin{align}
	\Omega=\{x=(x_1,x_2):x_1\in\mathcal{S}^1,\,0<x_2<1\},   \label{OmegaP}
\end{align}
subjected to the boundary condition
\begin{equation}
	\left.\begin{cases}
		u^1(x_1,1,t)=1, \,\,\, &u^1(x_1,0,t)=0, \\
		u^2(x_1,1,t)=0, \,\,\,\,\,  &u^2(x_1,0,t)=0,\\
		\fT(x_1,1,t)=1, \,\,\, &\fT(x_1,0,t)=\chi, \quad\quad\quad  \forall x_1\in \mathcal{S}^1,\, t>0,
	\end{cases}\label{boundarycon2}
	\right.	
\end{equation}
and the initial condition
\begin{align}
	W(x,0)=W_0(x)=(\rho_0(x),u_0(x),\fT_0(x))^\top, \quad\quad \forall x\in \Omega. \label{initialcon1}
\end{align}
Here, and in the sequel, $\mathcal{S}^1$ denotes the unit circle, and $\ep$, $\mu$, $\mup$ and $\kappa$ are non-dimensional parameters given by  
\begin{equation}
	\ep=\sqrt{\gamma}\Ma, \,\, \mu=\frac{1}{\Re}, \,\, \mup=\frac{\nup}{\Re\nu},\,\, \kappa=\frac{C_p}{\Re \Pr}.  \label{ReMAnew}
\end{equation}
To derive \eqref{2DCNS0}, we have used the relation \eqref{kappa} and \eqref{ReMa}. Accordingly, the Couette flow \eqref{Couette} is transformed to 
\begin{align}
	\tW=(\trho,\tu,\tfT)^\top,
\end{align}
where
\begin{equation}
	\tfT=\chi +(1-\chi)x_2-\frac{\ep^2\Pr }{2C_p}(x^2_2-x_2),\,\,\,\,\, \tu^1=x_2, \,\,\,\,\, \tu^2=0,\,\,\,\,\,\trho=(\tfT)^{-1}.\label{Couette1}
\end{equation}


Before stating the main theorems, we introduce the notations used in this paper. \\
$\bullet\,\,\,L^q(\Omega)$: The standard Lebesgue space over $\Omega$ with the norm $\| \cdot \|_{L^{q}}\,\,(1\leq q\leq \infty)$.\\
$\bullet\,\,\,H^l(\Omega)$: The usual $L^2$-Sobolev space over $\Omega$ of integer order $l$ with the norm $\| \cdot \|_{H^{l}}\,\,(l\geq 0)$.\\
$\bullet\,\,\,C([0,T];H^l(\Omega))$: The space of continuous functions on an interval $[0,T]$ with values in $H^l(\Omega)$.\\
The function spaces $L^2([0,T];H^1(\Omega))$ and $L^\infty([0,T];H^2(\Omega))$ can be defined similarly.

For a function $f$, we use the simplified notation
\begin{align}
	\int f dx:=\int_{\Omega} f dx.
\end{align}
We also denote by $\pt_{1}$ and $\pt_{2}$ the operators $\pt_{x_1}$ and $\pt_{x_2}$, respectively. Moreover, the  perturbation functions are defined by
\begin{align}
     (\varphi(t),\psi(t),\theta(t))^\top:=(\rho(t)-\trho,u(t)-\tu,\fT(t)-\tfT)^\top.  \label{def}
\end{align}

The main theorem of this paper can be stated as follows.
\begin{theorem} \label{main1}	
		Suppose that $\chi$, i.e, the temperature ratio between the lower and the top walls, satisfies
	\begin{align}
		|1-\chi|=O(\ep).  \label{chi}
	\end{align}
	Suppose that the initial perturbation $(\varphi_0,\psi_0,\theta_0):=(\varphi(0),\psi(0),\theta(0))$ satisfies
	\begin{equation}
	\begin{aligned}
		&\varphi_0\in H^3(\Omega),\quad \int_{\Omega}\varphi_0(x) dx=0,\\
		&\psi_0\in  H^1_0(\Omega)\cap H^3(\Omega),
		\quad \theta_0\in  H^1_0(\Omega)\cap H^3(\Omega).
	\end{aligned}   \label{compatibilitycon}
\end{equation}
Assume further that the compatibility conditions are satisfied:
\begin{equation}
	\left.\begin{cases}
	\pt_{t}\psi(0)=0,\quad\quad\text{on}\quad\pt\Omega, \\
	\pt_{t}\theta(0)=0,\quad\quad\text{on}\quad\pt\Omega.
	\end{cases}
\right.	 \label{compatibilitycon1}
\end{equation}
Then, there exist positive constants $\Re^\prime$, $\vep^\prime$ and $C_0$ such that if
\begin{align}
	\Re<\Re^\prime, \quad \ep<\vep^\prime,  \label{con2}
\end{align}
and if
\begin{equation}
	\begin{aligned}
		&\|\psi(0)\|^2_{L^2}+\ep^{2}\|\nabla\psi(0)\|^2_{L^2}
		+\ep^{4}\|(\nabla^2\psi,\pt_{t}\psi)(0)\|^2_{L^2}
		+\ep^{6}\|(\nabla^3\psi,\nabla\pt_{t}\psi)(0)\|^2_{L^2}\leq C_0\ep^2,\\
			&\|\varphi(0)\|^2_{L^2}+\ep^{2}\|\nabla\varphi(0)\|^2_{L^2}
		+\ep^{4}\|(\nabla^2\varphi,\pt_{t}\varphi)(0)\|^2_{L^2}
		+\ep^{6}\|\nabla^3\varphi(0)\|^2_{L^2}\leq C_0\ep^4,\\
			&\|\theta(0)\|^2_{L^2}+\ep^{2}\|\nabla\theta(0)\|^2_{L^2}
		+\ep^{4}\|(\nabla^2\theta,\pt_{t}\theta)(0)\|^2_{L^2}
		+\ep^{6}\|(\nabla^3\theta,\nabla\pt_{t}\theta)(0)\|^2_{L^2}\leq C_0\ep^4,
	\end{aligned}  \label{con3}
\end{equation}
then there exists a unique global strong solution $(\rho,u,\fT)$ to \eqref{2DCNS0}--\eqref{initialcon1} satisfying 
\begin{equation}
	\begin{aligned}
		&\rho\in C\big([0,\infty);H^3(\Omega)\big),
		\quad u\in C\big([0,\infty);H^3(\Omega)\big)\cap L^2\big([0,\infty);H^4(\Omega)\big),\\
		&\fT\in C\big([0,\infty);H^3(\Omega)\big)\cap L^2\big([0,\infty);H^4(\Omega)\big),
	\end{aligned}   \label{regularity}
\end{equation}
and
\begin{equation}
	\begin{aligned}
		&\sup_{t\in [0,\infty)}\big(\ep^{-1}\|\varphi(t)\|_{L^2}+\|\psi(t)\|_{L^2}
		+\ep^{-1}\|\theta(t)\|_{L^2}\big)\leq \widehat{C}\ep,\\
		&\sup_{t\in [0,\infty)}\big(\ep^{-1}\|\nabla\varphi(t)\|_{L^2}
		+\|\nabla\psi(t)\|_{L^2}+\ep^{-1}\|\nabla\theta(t)\|_{L^2}\big)	\leq \widehat{C}.
	\end{aligned}  \label{uniform}
\end{equation}
Here, $\widehat{C}$ is a positive constant independent of $\ep$. Moreover, we have
\begin{align}
	\|\big(\rho(t)-\trho,u(t)-\tu,\fT(t)-\tfT\big)\|_{L^\infty}\rightarrow 0 
	\quad\quad \text{as} \quad t\rightarrow \infty. \label{largetime}
\end{align}
Therefore, the plane Couette flow \eqref{Couette1} with the initial perturbation $(\varphi_0,\psi_0,\theta_0)$ is asymptotically stable. 
\end{theorem}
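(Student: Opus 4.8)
\emph{Strategy and local existence.} The plan is to run the standard continuation scheme: local well-posedness plus a closed, uniform-in-time a priori estimate. First I would insert $(\rho,u,\fT)=(\trho+\varphi,\tu+\psi,\tfT+\theta)$ into \eqref{2DCNS0} and use that $\tW$ solves \eqref{2DCNS0} exactly, so that the system for $(\varphi,\psi,\theta)$ carries no forcing and reads schematically
\begin{equation*}
\begin{cases}
\pt_t\varphi+\tu\cdot\nabla\varphi+\trho\,\div\psi=\mathcal{N}_1,\\
\trho(\pt_t\psi+\tu\cdot\nabla\psi)+\trho\,(\psi\cdot\nabla)\tu+\ep^{-2}\nabla(\tfT\varphi+\trho\theta)-\mu\Delta\psi-(\mu+\mup)\nabla\div\psi=\mathcal{N}_2,\\
\tfrac{\trho}{\gamma-1}(\pt_t\theta+\tu\cdot\nabla\theta)+P(\trho,\tfT)\div\psi+\trho\,\theta\,\div\tu-\kappa\Delta\theta=\mathcal{N}_3,
\end{cases}
\end{equation*}
where the $\mathcal{N}_i$ collect the terms that are at least quadratic in $(\varphi,\psi,\theta)$ (including the $\sim\ep^2$ frictional heating), together with $\psi|_{\pt\Omega}=\theta|_{\pt\Omega}=0$, $\int_\Omega\varphi\,dx=0$, and $x_1$-periodicity. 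Local existence on an interval $[0,T_*]$ in the class \eqref{regularity} then follows from a routine linearization/iteration argument based on parabolic regularity for the $\psi$- and $\theta$-equations and a transport estimate for $\varphi$; the compatibility conditions \eqref{compatibilitycon1} are exactly what make $\pt_t\psi(0),\pt_t\theta(0)$ admissible so that the top-order norms in \eqref{con3} can be propagated.

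\emph{The $\ep$-weighted a priori estimate.} The heart of the matter is to control, along the solution, the weighted energy
\begin{equation*}
\mathcal{E}(t):=\sum_{k=0}^{3}\ep^{2k-2}\big(\|\nabla^k\varphi\|_{L^2}^2+\|\nabla^k\theta\|_{L^2}^2\big)+\sum_{k=0}^{3}\ep^{2k}\|\nabla^k\psi\|_{L^2}^2+(\text{matching }\pt_t\text{-terms}),
\end{equation*}
whose weights are precisely those read off from \eqref{con3} and \eqref{uniform}. At the basic level one tests the momentum equation with $\psi$, the continuity equation with $\ep^{-2}\tfT\trho^{-1}\varphi$, and the temperature equation with $\ep^{-2}(\gamma-1)P(\trho,\tfT)^{-1}\theta$; after integration by parts the singular contribution $\ep^{-2}\!\int\nabla(\tfT\varphi+\trho\theta)\cdot\psi$ cancels the $\ep^{-2}\div\psi$ terms produced by the other two equations (the usual antisymmetry of the acoustic part), leaving the dissipation $\mu\|\nabla\psi\|_{L^2}^2+\ep^{-2}\kappa\|\nabla\theta\|_{L^2}^2$ plus commutator and lower-order terms. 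The one term lacking a good sign, $\int\trho\,(\psi\cdot\nabla)\tu\cdot\psi$ coming from the background shear $\tu^1=x_2$, is $O(\|\psi\|_{L^2}^2)=O(\|\nabla\psi\|_{L^2}^2)$ by Poincar\'e and is absorbed because $\mu=\Re^{-1}$ is large under $\Re<\Re'$; the remaining linear terms carry a factor $\nabla\trho$ or $\nabla\tfT$, which is $O(\ep)$ thanks to $|1-\chi|=O(\ep)$, and are likewise absorbed for $\ep$ small. Higher derivatives are then taken in the tangential direction $\pt_1$ and in time, which preserve the Dirichlet conditions, each order raising the $\ep$-weight by one power; the normal derivatives $\nabla^2\psi$, $\nabla^2\theta$ (and subsequently $\nabla^3$) are recovered from the elliptic structure $\mu\Delta\psi=\cdots$, $\kappa\Delta\theta=\cdots$ of the equations. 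Since $\varphi$ enjoys no smoothing, its $H^3$-bound is propagated by a transport estimate along $\tu+\psi$, and the extra integrated quantity $\ep^{-2}\!\int_0^t\|\nabla\varphi\|_{L^2}^2$ needed to close the loop is extracted by taking $L^2$-norms in the momentum equation solved for $\ep^{-2}\nabla(\tfT\varphi+\trho\theta)$, which is bounded by $\|\pt_t\psi\|_{L^2}+\mu\|\nabla^2\psi\|_{L^2}+(\text{l.o.t.})$.

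\emph{Closure, continuation, and decay.} Assembling the hierarchy with suitable combining constants yields a differential inequality of the form $\tfrac{d}{dt}\mathcal{E}+c_0\mathcal{D}\le C(\Re+\ep+\sqrt{\mathcal{E}})\,\mathcal{D}$, where $\mathcal{D}$ is the corresponding weighted dissipation, with leading part $\mu\|\nabla\psi\|_{L^2}^2+\ep^{-2}\kappa\|\nabla\theta\|_{L^2}^2$ together with the higher-order, time-derivative, and effective-flux contributions. Under the smallness of $\Re$, $\ep$, and of $\mathcal{E}$ (the bootstrap hypothesis), the right-hand side is absorbed into the left, and integrating against $\mathcal{E}(0)\le C_0\ep^2$ from \eqref{con3} gives $\mathcal{E}(t)+c_0\!\int_0^t\mathcal{D}\le C_0\ep^2$ on the whole interval of existence; this closes the bootstrap and, with the local theory, yields the global solution \eqref{regularity} and the uniform bounds \eqref{uniform}. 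Uniqueness follows from a Gr\"onwall estimate on the difference of two solutions. For \eqref{largetime}, from $\int_0^\infty\mathcal{D}\,dt<\infty$ together with uniform bounds on the time derivatives of the quantities entering $\mathcal{D}$ (hence their uniform continuity in $t$) one gets $\mathcal{D}(t)\to0$; in particular $\|\nabla\psi\|_{L^2}$, $\|\nabla\theta\|_{L^2}$, $\|\pt_t\psi\|_{L^2}\to0$, whence $\|\nabla\varphi\|_{L^2}\to0$ by the momentum equation solved for $\ep^{-2}\nabla(\tfT\varphi+\trho\theta)$, and then $\|\psi\|_{L^2},\|\theta\|_{L^2}\to0$ by Poincar\'e and $\|\varphi\|_{L^2}\to0$ by the Poincar\'e--Wirtinger inequality using $\int_\Omega\varphi\,dx=0$; interpolating with the uniform $H^3$-bounds gives $\|(\varphi,\psi,\theta)\|_{L^\infty}\to0$.

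\emph{Main obstacle.} I expect the decisive difficulty to lie in closing the top-order a priori estimate uniformly in $\ep$, where one must simultaneously (i) neutralize the singular $\ep^{-2}$ pressure, which is possible only through the cancellation structure above combined with the precisely tuned $\ep$-weights at each derivative level; (ii) compensate the complete lack of any regularizing mechanism for $\varphi$, via transport estimates plus the effective-flux identity; and (iii) cope with the non-slip boundary conditions, which rule out the naive tangential-to-normal derivative trick and route every normal-derivative bound through the elliptic part of the system, precisely where the compatibility conditions \eqref{compatibilitycon1} are indispensable.
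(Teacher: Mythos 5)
Your proposal is correct in outline and follows the same architecture as the paper: local well-posedness, a closed $\ep$-weighted energy hierarchy with exactly the weights dictated by \eqref{con3}, absorption of the shear term $\int\trho(\psi\cdot\nabla\tu)\cdot\psi$ using the smallness of $\Re$ and of the terms carrying $\nabla\tfT,\nabla\trho=O(\ep)$ using \eqref{chi}, tangential/time differentiation followed by elliptic recovery of normal derivatives, and a continuation argument. The one genuine methodological difference is at the basic level: you symmetrize the linearized acoustic part by testing with $\psi$, $\ep^{-2}\tfT\trho^{-1}\varphi$ and $\ep^{-2}(\gamma-1)P(\trho,\tfT)^{-1}\theta$, i.e.\ a quadratic weighted energy, whereas the paper uses a genuinely nonlinear relative entropy $\eta$ (built from $z-\ln(1+z)$ applied to $\fT/\tfT$ and $\tau/\ttau$, see \eqref{equiv1}). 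The two are equivalent at leading order, but the relative entropy is chosen precisely so that, after the exact cancellations and using $\div\tu=0$, the right-hand side of \eqref{I0} contains no term in $\varphi$ and no derivative of $\psi$ or $\theta$ beyond what the dissipation controls; with the quadratic energy you must additionally handle cubic remainders such as $\ep^{-2}\int\varphi^{2}\,\div\psi\,dx$ via $L^\infty$ bounds from the bootstrap, which works but is less clean. Your treatment of the decay \eqref{largetime} (integrability of the dissipation plus uniform continuity) also differs superficially from the paper's one-line appeal to $(\varphi,\psi,\theta)\in H^{1}([0,\infty);H^{2})$ and Sobolev embedding; both are fine.

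One step in your sketch needs care as written: you propose to extract $\ep^{-4}\int_{0}^{t}\|\nabla\varphi\|_{L^{2}}^{2}\,ds$ by solving the momentum equation for $\ep^{-2}\nabla(\tfT\varphi+\trho\theta)$ and bounding it by $\|\pt_{t}\psi\|_{L^{2}}+\mu\|\nabla^{2}\psi\|_{L^{2}}+\mathrm{l.o.t.}$, while simultaneously recovering $\nabla^{2}\psi$ from the elliptic part of the same equation, which requires $\ep^{-2}\nabla P$ in $L^{2}$; taken literally these two bounds feed into each other and yield nothing. The paper breaks this circle by (i) first obtaining $\int_{0}^{t}\|\nabla\div\psi\|_{L^{2}}^{2}\,ds$ from the normal-derivative/effective-flux identity for $(2\mu+\mup)\pt_{2}^{2}\psi^{2}-\ep^{-2}H_{1}$ (Lemma 3.6), and (ii) then applying the stationary Stokes estimate of Lemma 2.4, which places $\mu\|\psi\|_{H^{2}}^{2}$ and $\ep^{-4}\|\nabla P\|_{L^{2}}^{2}$ together on the left with only $\nabla\div\psi$, $\pt_{t}\psi$ and lower-order terms on the right. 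Since you do invoke ``the effective-flux identity'' among your listed obstacles, the intended mechanism is the right one, but this ordering is the actual content of Lemmas 3.6--3.8 and should be made explicit for the argument to close.
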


\begin{remark}
	It follows from \eqref{Couette1} and \eqref{def} that $\pt_{t}\psi(0)=\pt_{t}u(0)$. The notation $\pt_{t}u(0)$ is defined by taking $t = 0$ in \eqref{eq2}, that is,
	\begin{align}
		\rho_0(\pt_t u(0)+ u_0\cdot\nabla u_0)+\ep^{-2}\nabla P(\rho_0,\fT_0)= \mu\Delta u_0+(\mu+\mup)\nabla\div u_0.
	\end{align}
It is indeed a compatibility condition. The notations $\pt_{t}\varphi(0)$ and $\pt_{t}\theta(0)$ are defined in a similar way. 
\end{remark}

\begin{remark}
	The condition $\eqref{compatibilitycon}_1$ naturally follows from the conservation of mass.
\end{remark} 

\begin{remark}
From proof below, the constant $\ep^\prime$ indeed depends on the Reynolds number $\Re$ in \Cref{main1}. Moreover, the constant $C_0$ depends on $\Re$, but is independent of $\ep$.  
\end{remark} 

\begin{remark}
	Note that for the case that $\chi=1+\frac{\ep^2\Pr }{2C_p}$, the temperature of the Couette flow $\tW$ reads 
	\begin{equation}
		\tfT=\chi-\frac{\ep^2\Pr }{2C_p}x^2_2.
	\end{equation}
	In this case, the isothermal lower wall corresponds to an adiabatic lower wall, i.e., 
	\begin{align}
		\pt_{x_2}\fT|_{x_2=0}=0.
	\end{align}
	It can be proved in a similar manner that with the above Neumann boundary condition for the temperature at the lower wall,  Theorem \ref{main1} also holds. The details are omitted here. 
\end{remark}

For the case that there is no temperature difference between the top and the lower walls, i.e.
\begin{align}
	\chi=1,  \label{same}
\end{align}   
which can be covered by the condition \eqref{chi}, we state the result on low Mach number limit of the global strong solutions obtained in \Cref{main1} as follows. 
\begin{theorem} \label{main2}
Suppose that \eqref{same} holds, and suppose that the condition \eqref{compatibilitycon}--\eqref{con3} hold as in \Cref{main1}. Denote by $(\rho^\ep,u^\ep,\fT^\ep)$ the unique global strong solution obtained from \Cref{main1}. Then, it holds that
\begin{align}
	\|\rho^\ep-1\|_{L^2}=O(\ep^2),\quad	\|u^\ep-\tu\|_{L^2}=O(\ep),\quad	\|\fT^\ep-1\|_{L^2}=O(\ep^2). \label{converge0}
\end{align} 
Note that $(\tu,p_*)$ with $p_*:= P(1,1)$ is indeed the incompressible plane Couette flow being a steady solution to the following initial-boundary value problem 
 \begin{equation}
	\left.\begin{cases}
		\div v=0,    \quad\quad\text{in}\quad \Omega\times[0,\infty),  \\
		\pt_t v+ v\cdot\nabla v+\nabla p = \mu\Delta v,\quad\quad\text{in}\quad \Omega\times[0,\infty),\\
         v|_{x_2=1}=1,\quad v|_{x_2=0}=0,\\
         v|_{t=0}=\tu.
	\end{cases}\label{2DINS}
	\right.
\end{equation} 
Thus, the global strong solution $(\rho^\ep,u^\ep,\fT^\ep)$ converges to $(1,\tu,1)$, which gives the incompressible plane Couette flow, as $\ep\rightarrow0$.
\end{theorem}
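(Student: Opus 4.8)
The plan is to deduce Theorem \ref{main2} from the uniform-in-$\ep$ bounds \eqref{uniform} of Theorem \ref{main1}, using the elementary observation that when $\chi=1$ the steady Couette flow $\tW$ is itself an $O(\ep^2)$-perturbation of the constant state $(1,\tu,1)$ in the thermodynamic variables.

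First I would read off, from the first line of \eqref{uniform} together with the definition \eqref{def} of the perturbations, the bounds
\[
\sup_{t\ge0}\|\rho^\ep-\trho\|_{L^2}\le\widehat{C}\ep^2,\qquad \sup_{t\ge0}\|u^\ep-\tu\|_{L^2}\le\widehat{C}\ep,\qquad \sup_{t\ge0}\|\fT^\ep-\tfT\|_{L^2}\le\widehat{C}\ep^2 ,
\]
with $\widehat{C}$ independent of $\ep$. Next I would estimate the distance of the steady state to the constant state. Setting $\chi=1$ in \eqref{Couette1} gives $\tfT=1-\frac{\ep^2\Pr}{2C_p}(x^2_2-x_2)$, so that $1\le\tfT\le1+C\ep^2$ on $\Omega$ and hence $\|\tfT-1\|_{L^2}\le C\ep^2$; since $\tfT$ is bounded below by $1$, $\trho-1=(\tfT)^{-1}-1=(1-\tfT)/\tfT$ also satisfies $\|\trho-1\|_{L^2}\le C\ep^2$. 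Combining the two estimates by the triangle inequality yields \eqref{converge0}: $\|\rho^\ep-1\|_{L^2}\le\|\rho^\ep-\trho\|_{L^2}+\|\trho-1\|_{L^2}=O(\ep^2)$, likewise $\|\fT^\ep-1\|_{L^2}=O(\ep^2)$, and $\|u^\ep-\tu\|_{L^2}=O(\ep)$ directly.

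To finish, I would verify by direct substitution that $(\tu,p_*)$ with $\tu=(x_2,0)^\top$ and the constant $p_*=P(1,1)$ is a steady solution of \eqref{2DINS}: indeed $\div\tu=\pt_1 x_2=0$, $(\tu\cdot\nabla)\tu=x_2\,\pt_1\tu=0$ because $\tu$ is independent of $x_1$, $\Delta\tu=0$ by the linearity of $\tu^1$, $\nabla p_*=0$, and the Dirichlet data $\tu|_{x_2=1}=(1,0)^\top$, $\tu|_{x_2=0}=0$ match. Consequently \eqref{converge0} shows $(\rho^\ep,u^\ep,\fT^\ep)\to(1,\tu,1)$ in $L^2(\Omega)$, uniformly on $[0,\infty)$, as $\ep\to0$. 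There is no genuinely new obstacle in this argument — all the analytic difficulty is already absorbed into the $\ep$-uniform a priori estimates underlying Theorem \ref{main1}; the only points deserving care are that one must invoke the $\ep$-uniformity of $\widehat{C}$ in \eqref{uniform} (not merely its uniformity in time), and that the improved $O(\ep^2)$ rate for the density and temperature, rather than $O(\ep)$, relies precisely on the choice $\chi=1$, which leaves the Couette steady state carrying only $O(\ep^2)$ thermal stratification.
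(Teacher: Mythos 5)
Your proposal is correct and follows essentially the same route as the paper: invoke the $\ep$-uniform bounds \eqref{uniform} from \Cref{main1} to get $\|\rho^\ep-\trho\|_{L^2}=O(\ep^2)$, $\|u^\ep-\tu\|_{L^2}=O(\ep)$, $\|\fT^\ep-\tfT\|_{L^2}=O(\ep^2)$, observe from \eqref{Couette1} with $\chi=1$ that $\|\trho-1\|_{L^2}=O(\ep^2)$ and $\|\tfT-1\|_{L^2}=O(\ep^2)$, and conclude by the triangle inequality. Your explicit verification that $(\tu,p_*)$ solves \eqref{2DINS} is a harmless elaboration of a step the paper simply asserts.
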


\begin{remark}
	We mention that \Cref{main1,main2} can be extended to the case of 3-D full compressible Navier-Stokes equations around the plane Couette flow by slight modifications.  
\end{remark}

Now we make some comments on the analysis of this paper. Compared to Matsumura-Nishida's results \cite{MN1980,MN1983} on global existence of strong solutions to the full compressible Navier-Stokes equations with small initial disturbance around a motionless constant state, and compared to the results (e.g., \cite{JO2011,JO2022}) on low Mach number limit of compressible Navier-Stokes equations around a motionless constant state, the Couette flow \eqref{Couette1} with non-zero velocity brings some essential difficulties. More specifically, we need control the convective effects brought by the non-zero velocity $\tu$. Moreover, to study the low Mach number behavior of the solutions, we need to derive the uniform estimates independent of $\ep$ and the time. However, due to the non-slip boundary conditions, it is difficult to obtain the uniform estimates on normal derivatives of the velocity by using the method of integration by parts as in the case with Navier-slip boundary conditions \cite{O2009,RO2016,JO2022,OY2022,MRS2022,S2022}. Similarly, the isothermal boundary condition also cause troubles in applying the method of integration by parts as in the case with zero thermal conductivity coefficient \cite{JO2011}, the case with adiabatic boundary condition \cite{JO2022}, and the case with convective boundary condition \cite{RO2016}. In addition, quite different from the plane Couette flow of the isentropic compressible Navier-Stokes equations studied by Kagei \cite{K2011,K2012}, the plane Couette flow of the full compressible Navier-Stokes equations is more complicated, since both its density and temperature are no more constant when there is a temperature difference between the top and the lower walls. To overcome these difficulties, we consider the equations of perturbation and establish the uniform estimates on an energy functional which includes the $\ep$-weighted mixed time and spatial derivatives of the solutions. As one of the key ingredients in our proofs, based the observation that $\div\tu=0$, we employ the relative entropy method to derive the $\ep$-weighted basic energy estimate without involving high-order derivatives terms on the right hand side of the inequality, provided that some smallness assumptions hold. We also remark that to control the convective effect terms brought by $\tu$, we need to use the structure that $\div\tu=0$ and employ the $\ep$-weighted $L^2_TH^1$-type estimates on the perturbations. Based on the Poincar\'{e} inequality, the $L^2_TH^1$-type estimates of $\psi$ and $\theta$ naturally follow from the viscosity and heat conductivity effects, respectively. With help of the zero average assumption on the initial density perturbation $\eqref{compatibilitycon}_1$, we use the elliptic estimates on stationary Stokes equations in the spirits of \cite{MN1983,K2011} to obtain the $\ep$-weighted $L^2_TH^1$-type estimate on $\varphi$.  

The rest of this paper is organized as follows. In Section 2, we collect some basic facts and elementary inequalities. In Section 3, we show the uniform \textit{a priori} estimates independent of the Mach number by using a weighted energy method. Finally, the main theorems are proved in Section 4.

\section{Preliminaries}
In this section we first recall some known facts and elementary inequalities that will be used later. Later in the section, we give the system of equations for the perturbation and show the local existence and uniqueness of strong solutions to the initial-boundary value problem of the resulting system.

\subsection{Some elementary inequalities}
In this subsection we recall some known facts and elementary inequalities that will be used later.

We first recall the following Poincar\'{e} type inequality.
\begin{lemma} \label{Poincareineq2}
	Let $\Omega^b\subset\R^n$ $(n>1)$ be bounded and locally Lipschitz and let $\Sigma$ be an arbitrary portion of $\pt\Omega^b$ of positive (($n-1$)-dimensional) measure. Then, the following statements hold.
	
	\noindent(1) For $f\in W^{1,q}$ with $\int_{\Omega^{b}}fdx=0$, $1\leq q<\infty$, it holds that
	\begin{align}
		\|f\|_{L^q(\Omega^b)}
		\leq c_1\|\nabla f\|_{L^q(\Omega^b)},
	\end{align}
	where $c_1$ is a positive constant depending only on $n$, $q$ and $\Omega^b$;
	
	\noindent(2) For $f\in W^{1,q}$, $1\leq q<\infty$, the following inequality holds:
	\begin{align}
			\|f\|_{L^q(\Omega^b)}
			\leq c_2\left(\|\nabla f\|_{L^q(\Omega^b)}+\big|\int_{\Sigma}f\big|\right),
	\end{align}
	where $c_2$ is a positive constant depending only on $n$, $q$, $\Omega^b$ and $\Sigma$.
\end{lemma}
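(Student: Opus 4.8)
The plan is to prove both inequalities by a standard compactness--contradiction argument, relying on the Rellich--Kondrachov compact embedding $W^{1,q}(\Omega^b)\hookrightarrow L^q(\Omega^b)$ (as a compact embedding) and on the continuity of the trace operator, both of which are available because $\Omega^b$ is bounded with locally Lipschitz boundary, hence a Lipschitz domain. Since in all our applications $\Omega^b$ will be the connected domain $\Omega$, I will assume $\Omega^b$ connected; otherwise one runs the same argument on each connected component (using that the relevant linear constraint forces the constant to vanish on at least one component, and then on all of them for part (1)).

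For part (1): suppose the inequality fails. Then I can choose a sequence $f_k\in W^{1,q}(\Omega^b)$ with $\int_{\Omega^b}f_k\,dx=0$, $\|f_k\|_{L^q(\Omega^b)}=1$, and $\|\nabla f_k\|_{L^q(\Omega^b)}\to 0$. The sequence is then bounded in $W^{1,q}(\Omega^b)$, so by the compact embedding a subsequence (not relabeled) converges in $L^q(\Omega^b)$ to some $f$; combined with $\nabla f_k\to 0$ in $L^q$, this gives $f_k\to f$ strongly in $W^{1,q}(\Omega^b)$ and $\nabla f=0$, so $f$ is a.e.\ constant on the connected set $\Omega^b$. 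Passing to the limit in the zero-average constraint forces $f=0$, contradicting $\|f\|_{L^q(\Omega^b)}=\lim_k\|f_k\|_{L^q(\Omega^b)}=1$. By homogeneity this yields the stated inequality with $c_1$ depending only on $n$, $q$ and $\Omega^b$.

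For part (2): I would again argue by contradiction, with a sequence $f_k\in W^{1,q}(\Omega^b)$ satisfying $\|f_k\|_{L^q(\Omega^b)}=1$ and $\|\nabla f_k\|_{L^q(\Omega^b)}+\big|\int_\Sigma f_k\big|\to 0$. As before, along a subsequence $f_k\to f$ strongly in $W^{1,q}(\Omega^b)$ with $f$ constant on $\Omega^b$. The extra ingredient here is the trace theorem: continuity of the trace operator $W^{1,q}(\Omega^b)\to L^q(\pt\Omega^b)$ applied to the $W^{1,q}$-convergent sequence gives $f_k|_{\pt\Omega^b}\to f|_{\pt\Omega^b}$ in $L^q(\pt\Omega^b)$, and since $\Sigma\subset\pt\Omega^b$ has finite $(n-1)$-dimensional measure, $\int_\Sigma f_k\to\int_\Sigma f$. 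Hence $\int_\Sigma f=0$; as $f$ is constant and $\Sigma$ has positive measure, $f=0$, contradicting $\|f\|_{L^q(\Omega^b)}=1$.

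I expect the only delicate point to be making sure the compact Sobolev embedding and the trace inequality genuinely hold under the stated minimal regularity (bounded, locally Lipschitz), including the endpoint $q=1$; these are classical for Lipschitz domains, so I would simply invoke them (or cite a standard monograph such as Galdi's book, which is how the paper treats this lemma). If an explicit constant were needed, an alternative would be to combine a bounded Sobolev extension operator $W^{1,q}(\Omega^b)\to W^{1,q}(\R^n)$ with the Poincar\'e inequality on a ball containing $\Omega^b$, but the compactness argument above is the cleaner route and is the one I would carry out.
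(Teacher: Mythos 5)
Your proposal is correct. Note, however, that the paper does not actually prove this lemma: its ``proof'' is a one-line citation to Theorem II.5.4 and Exercise II.5.13 of Galdi's monograph, exactly as you anticipated in your closing remark. What you supply is the standard self-contained argument behind those results: normalize a hypothetical counterexample sequence, use the Rellich--Kondrachov compact embedding $W^{1,q}(\Omega^b)\hookrightarrow L^q(\Omega^b)$ (valid for bounded Lipschitz domains and all $1\le q<\infty$) to extract a strong $L^q$ limit which must be constant, and then kill the constant using the zero-average constraint in part (1) or the trace theorem together with the positive measure of $\Sigma$ in part (2). Both steps are sound, including the endpoint $q=1$. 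The one point deserving care is the connectedness of $\Omega^b$, which you correctly flag: part (1) is genuinely false on a disconnected domain (a function taking opposite constant values on two components of equal volume has zero mean and zero gradient), so the implicit convention that a ``domain'' is connected is essential, and your parenthetical suggestion of running the argument componentwise does not rescue the disconnected case for part (1); since the lemma is only applied to the connected strip $\Omega$, this is harmless here. The only thing your compactness route does not deliver is an explicit constant, but none is needed in the paper, and you already indicate the extension-operator alternative if one were.
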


\begin{proof}
	See the Theorem II.5.4 and Exercise II.5.13 in \cite{Galdi2011}.
\end{proof}
The following is the Gagliardo-Nirenberg inequality which will be used frequently.
\begin{lemma}\label{GN lemma}
	For $p\in[2,\infty)$, there exists a generic positive constant $C$ which may depend on $p$ and $\Omega$, such that for any $f\in H^2(\Omega)$, we have
	\begin{align}
		&\|f\|_{L^p}    
		\leq C\|f\|^{\frac{2}{p}}_{L^2}\|\nabla f\|^{1-\frac{2}{p}}_{L^2}+C\|f\|_{L^2},\\
		&\|f\|_{L^\infty}\leq C\|f\|^{\frac{1}{2}}_{L^2}
		\|\nabla^2 f\|^{\frac{1}{2}}_{L^{2}}+C\|f\|_{L^2}.
	\end{align}
\end{lemma}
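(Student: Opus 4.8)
The plan is to reduce both inequalities to their scale-invariant counterparts on the whole plane $\R^2$ and then transfer them to the bounded domain $\Omega$ by means of an extension operator, the additive lower-order terms being generated precisely in the transfer step. Since $\Omega=\mathcal{S}^1\times(0,1)$ is a bounded domain with smooth boundary (a flat cylinder), there is a single linear extension operator $E$ that is bounded simultaneously from $H^m(\Omega)$ into $H^m(\R^2)$ for $m=0,1,2$ and satisfies $Ef|_\Omega=f$. One may take Stein's universal extension, or, exploiting the product structure, a Hestenes-type higher-order reflection across the two flat pieces $x_2=0$ and $x_2=1$ (no reflection is needed in $x_1$, since the domain is already periodic there), followed by a cutoff. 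In either case $\|Ef\|_{L^2(\R^2)}\le C\|f\|_{L^2(\Omega)}$, $\|\nabla Ef\|_{L^2(\R^2)}\le C\|f\|_{H^1(\Omega)}$ and $\|\nabla^2 Ef\|_{L^2(\R^2)}\le C\|f\|_{H^2(\Omega)}$.

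For the first inequality I would invoke the homogeneous Gagliardo--Nirenberg inequality on $\R^2$, namely $\|g\|_{L^p(\R^2)}\le C\|g\|_{L^2(\R^2)}^{2/p}\|\nabla g\|_{L^2(\R^2)}^{1-2/p}$ for all $g\in H^1(\R^2)$ and $p\in[2,\infty)$, and apply it to $g=Ef$. Using $\|f\|_{L^p(\Omega)}\le\|Ef\|_{L^p(\R^2)}$ together with the mapping properties of $E$ gives
\[
\|f\|_{L^p}\le C\|f\|_{L^2}^{2/p}\big(\|f\|_{L^2}+\|\nabla f\|_{L^2}\big)^{1-2/p}.
\]
The elementary inequality $(a+b)^s\le C(a^s+b^s)$ for $a,b\ge0$ and $s\in(0,1)$, applied with $s=1-2/p$, then splits the right-hand side into $C\|f\|_{L^2}^{2/p}\|\nabla f\|_{L^2}^{1-2/p}+C\|f\|_{L^2}$, which is exactly the claimed bound.

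The second inequality follows the same pattern, now starting from Agmon's inequality on $\R^2$, $\|g\|_{L^\infty(\R^2)}\le C\|g\|_{L^2(\R^2)}^{1/2}\|\nabla^2 g\|_{L^2(\R^2)}^{1/2}$ for $g\in H^2(\R^2)$, applied to $g=Ef$. The mapping properties of $E$ yield $\|f\|_{L^\infty}\le C\|f\|_{L^2}^{1/2}\|f\|_{H^2(\Omega)}^{1/2}$, and the standard interpolation bound $\|f\|_{H^2(\Omega)}\le C(\|f\|_{L^2}+\|\nabla^2 f\|_{L^2})$ (itself a consequence of $\|\nabla f\|_{L^2}\le\delta\|\nabla^2 f\|_{L^2}+C_\delta\|f\|_{L^2}$, valid on bounded Lipschitz domains), combined once more with $(a+b)^{1/2}\le C(a^{1/2}+b^{1/2})$, produces $\|f\|_{L^\infty}\le C\|f\|_{L^2}^{1/2}\|\nabla^2 f\|_{L^2}^{1/2}+C\|f\|_{L^2}$.

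The genuine content, and the step I expect to be the main obstacle if a fully self-contained argument is desired, is twofold: the two homogeneous inequalities on $\R^2$, and, more subtly, producing the \emph{additive} correction $+C\|f\|_{L^2}$ rather than the weaker $C\|f\|_{H^1}$. The latter is exactly why one needs an extension operator bounded on $L^2$ \emph{as well as} on the higher $H^m$; an extension bounded only on $H^1$ would contaminate the leading factor and destroy the interpolated form. The $\R^2$ ingredients themselves can be established directly: $\|g\|_{L^4(\R^2)}^2\le C\|g\|_{L^2}\|\nabla g\|_{L^2}$ by Ladyzhenskaya's slicing argument (bounding $\sup_{x_1}|g|^2$ and $\sup_{x_2}|g|^2$ by one-dimensional integrals of $|g|\,|\partial_i g|$ and applying Fubini), the general $p\in[2,\infty)$ case by running the same slicing on a suitable power $|g|^{\alpha}$ of $g$, and the $L^\infty$ estimate by the tensorized fundamental-theorem-of-calculus bound $\|g\|_{L^\infty}^2\le C\|g\|_{L^2}\|\nabla^2 g\|_{L^2}$. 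Alternatively, all of these are classical and may simply be quoted from Nirenberg's interpolation theorem, in which case the only point requiring care is the transfer to $\Omega$ described above; because the slicing estimates on the interval $(0,1)$ and the circle $\mathcal{S}^1$ naturally carry the lower-order corrections, one could equally run the entire argument directly on the product domain $\Omega$ and bypass the extension altogether.
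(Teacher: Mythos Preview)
Your argument is correct: the reduction to the homogeneous Gagliardo--Nirenberg and Agmon inequalities on $\R^2$ via a simultaneous $L^2$/$H^1$/$H^2$ extension operator, followed by the splitting $(a+b)^s\le a^s+b^s$ to isolate the additive $\|f\|_{L^2}$ term, is a clean and standard route, and your remark that the extension must be bounded on $L^2$ (not merely on $H^1$) in order to obtain $+C\|f\|_{L^2}$ rather than $+C\|f\|_{H^1}$ is exactly the right point of care.

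The paper, however, does not give a proof at all: it simply cites Nirenberg's original paper \cite{N1959}. So there is no approach to compare against; you have supplied a genuine proof where the authors only quote the classical reference. If anything, your write-up is more informative than the paper's treatment, and the alternative you mention at the end---running the one-dimensional slicing directly on $\mathcal{S}^1\times(0,1)$ to bypass the extension---would also work and is perhaps the most elementary option given the product structure of $\Omega$.
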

\begin{proof}
   See \cite{N1959}.
\end{proof}

Next, we recall the classical elliptic theorey for the Lam\'{e} system.
\begin{lemma}\label{Lame lemma}
	Let $u$ be a smooth solution solving the problem
	\begin{equation}
		\left.
		\begin{cases}
			\mu\Delta u+(\mu+\mup)\nabla\div u=F\quad\quad\text{in}\,\,\Omega,\\
			u=0\quad\quad\text{on}\,\,\pt\Omega.
		\end{cases} \label{lamesys}
		\right.
	\end{equation} 
    Then, for $p\in[2,\infty)$ and $k\in \mathcal{N}^{+}$, there exists a positive constant $C$ depending only on $\mu$, $\mup$, $p$, $k$ and $\Omega$ such that the following estimates hold:
    
	\noindent(1) if $F\in L^p(\Omega)$, then
	\begin{align}
		\|u\|_{W^{k+2,p}}\leq C\|F\|_{W^{k,p}};
	\end{align}
	\noindent(2) if $F=\div f$ with $f=(f^{ij})_{3\times 3}$, $f^{ij}\in W^{k,p}(\Omega)$, then
	\begin{align}
		\|u\|_{W^{k+1,p}}\leq C\|F\|_{W^{k,p}}.
	\end{align}
\end{lemma}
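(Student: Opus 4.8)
The plan is to reduce the estimates for the Lamé operator $\mu\Delta u+(\mu+\mup)\nabla\div u$ to the classical Agmon--Douglis--Nirenberg (ADN) theory for elliptic systems with Dirichlet boundary conditions. First I would observe that, under the structural assumptions $\nu>0$ and $\nu+\nup>0$ (equivalently $\mu>0$ and $2\mu+\mup>0$ in the non-dimensional scaling; note $\mu+\mup>0$ already suffices here), the operator is a constant-coefficient, homogeneous second-order system that is properly elliptic: the symbol $\mu|\xi|^2\mI+(\mu+\mup)\xi\otimes\xi$ is positive definite for every $\xi\neq 0$, since for any vector $a$ one has $a^\top\!\big(\mu|\xi|^2\mI+(\mu+\mup)\xi\otimes\xi\big)a=\mu|\xi|^2|a|^2+(\mu+\mup)(\xi\cdot a)^2\geq \min\{\mu,\mu+\mup\}|\xi|^2|a|^2$ (or bound the $(\xi\cdot a)^2$ term by $|\xi|^2|a|^2$). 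Hence the principal symbol is invertible away from the origin, which is exactly the ellipticity condition. One then checks the complementing (Lopatinski--Shapiro) condition for the Dirichlet boundary operator $u\mapsto u|_{\pt\Omega}$; for the Lamé system with these coefficient sign conditions this is classical (see e.g. Agmon--Douglis--Nirenberg, or Valli, or Temam's treatment of the Stokes/Lamé systems). Consequently the a priori estimate
\[
\|u\|_{W^{m+2,p}}\leq C\big(\|Lu\|_{W^{m,p}}+\|u\|_{L^p}\big)
\]
holds for all $m\geq 0$, $1<p<\infty$, with $C=C(\mu,\mup,m,p,\Omega)$.

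For part (1), it then remains to absorb the lower-order term $\|u\|_{L^p}$. Since $u=0$ on $\pt\Omega$, Poincar\'e's inequality (Lemma \ref{Poincareineq2}(1), or its $L^p$ form applied component-wise) gives $\|u\|_{L^p}\leq c\|\nabla u\|_{L^p}$; more efficiently, I would pair the equation with $u$ and integrate by parts:
\[
\mu\int|\nabla u|^2\,dx+(\mu+\mup)\int(\div u)^2\,dx=-\int F\cdot u\,dx\leq \|F\|_{L^{p'}}\|u\|_{L^p},
\]
but since we want $L^p$ rather than $L^2$ control I would instead simply invoke uniqueness: the only solution of \eqref{lamesys} with $F=0$ is $u=0$ (by the energy identity above together with Poincar\'e), so the map $F\mapsto u$ is well-defined and, by the closed graph theorem applied to the a priori estimate, $\|u\|_{W^{k+2,p}}\leq C\|F\|_{W^{k,p}}$ with no lower-order term. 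Alternatively one argues by contradiction with a compactness argument (Rellich), extracting from a putative sequence $u_n$ with $\|u_n\|_{W^{k+2,p}}=1$, $\|F_n\|_{W^{k,p}}\to0$ a limit solving the homogeneous problem, hence zero, contradicting $\|u_n\|_{L^p}$ being bounded below (which follows because the $W^{k+2,p}$ norm concentrates in the top derivatives only if $\|u_n\|_{W^{k+1,p}}\to 0$, and then the full-regularity estimate forces $\|u_n\|_{W^{k+2,p}}\to 0$).

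For part (2), with $F=\div f$ and $f^{ij}\in W^{k,p}$, the gain is that one only needs a $W^{k+1,p}$ bound, i.e.\ one fewer derivative, which is the ``divergence-form'' (or $W^{-1,p}$) elliptic estimate. I would establish this by duality: for $\phi\in C_c^\infty(\Omega)^n$, solve the adjoint Dirichlet problem $L w=\phi$ (the operator is formally self-adjoint), obtaining $\|w\|_{W^{2,p'}}\leq C\|\phi\|_{L^{p'}}$ from part (1); then
\[
\int u\cdot\phi\,dx=\int u\cdot Lw\,dx=\int Lu\cdot w\,dx=\int (\div f)\cdot w\,dx=-\int f:\nabla w\,dx\leq \|f\|_{L^p}\|\nabla w\|_{L^{p'}},
\]
which gives $\|u\|_{L^p}\leq C\|f\|_{L^p}$; differentiating the equation $k+1$ times (interior) and using the corresponding boundary-flattening argument near $\pt\Omega$ upgrades this to $\|u\|_{W^{k+1,p}}\leq C\|f\|_{W^{k,p}}=C\|F\|_{W^{k,p}}$ in the sense that $\|F\|_{W^{k,p}}$ is read as the $W^{k,p}$-norm of a potential $f$. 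The main obstacle is really just the careful verification of the complementing condition and the bootstrapping near the boundary in the divergence-form case; once the ADN machinery and the uniqueness/compactness removal of lower-order terms are in place, everything else is routine. Since this lemma is entirely classical, I would in the writeup simply cite the ADN paper together with a standard reference for the Lam\'e system (e.g.\ \cite{Galdi2011} or the monographs cited therein) rather than reproduce the proof.
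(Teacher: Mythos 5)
Your proposal is correct and follows essentially the same route as the paper: the paper's entire proof is a citation of the Agmon--Douglis--Nirenberg $L^p$ theory, and your argument is just that reduction carried out in detail (ellipticity of the symbol, complementing condition, removal of the lower-order term by uniqueness, and duality for the divergence-form case). Nothing in your sketch conflicts with the paper's treatment; you simply supply the standard details the authors delegate to \cite{ADN1959,ADN1964}.
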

\begin{proof}
	See the standard $L^p$-estimates for the Agmon-Douglis-Nirenberg systems \cite{ADN1959,ADN1964}.
\end{proof}

Furthermore, we consider the following stationary nonhomogeneous Stokes equations,
\begin{equation}
	\left.
	\begin{cases}
	\div u=g\quad\quad\text{in} \,\,\Omega,\\
	-\mu\Delta u+\nabla p=F \quad\quad\text{in} \,\,\Omega,\\
    u=0 \quad\quad\text{on}\,\,\pt\Omega,
	\end{cases}\label{Stokes}
	\right.
\end{equation}
where $g$ and $F$ are known functions. We recall some known results for the Stokes system \eqref{Stokes}.
\begin{lemma}\label{Stokes lemma}
	If $F\in H^{k-1}(\Omega)$, $g\in H^k(\Omega)$ for some nonnegative integer $k$, and if the compatibility condition $\int_\Omega gdx=0$ holds, then there exists a unique solution $(u,p)$ in the space $H^{k-1}(\Omega)\times H^{k}_\#(\Omega)$ to the problem \eqref{Stokes}, where $H^{k}_\#(\Omega)\triangleq\big\{f\in H^{k}(\Omega):\int_\Omega fdx=0\big\}$ and $H^{-k}(\Omega)$ is the dual of $H^k_0(\Omega)$, the closure of $C^\infty_0(\Omega)$ in $H^{-k}(\Omega)$. Moreover,
	\begin{align}
		\mu\| u\|_{H^{k+1}}+\| p\|_{H^k}\leq C(\Omega,k)\big(\| F\|_{H^{k-1}}+\mu\| g\|_{H^k}\big), \label{Stokes1}
	\end{align}
	where $C(\Omega,k)$ is a positive constant depending at most on $\Omega$ and $k$.
\end{lemma}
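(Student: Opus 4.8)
The plan is to reduce to the divergence-free case, solve it by a variational argument, recover the pressure via de Rham's theorem, and then bootstrap regularity by the Agmon--Douglis--Nirenberg theory, tracking the $\mu$-scaling throughout. First I would divide the momentum equation by $\mu$ and set $q:=p/\mu$, which reduces matters to the normalized system ($\mu=1$, source $\mu^{-1}F$): it suffices to produce $(u,q)\in H^{k+1}(\Omega)\times H^k_\#(\Omega)$ solving $\div u=g$, $-\Delta u+\nabla q=\mu^{-1}F$ in $\Omega$, $u=0$ on $\pt\Omega$, with $\|u\|_{H^{k+1}}+\|q\|_{H^k}\le C(\Omega,k)(\mu^{-1}\|F\|_{H^{k-1}}+\|g\|_{H^k})$; multiplying this by $\mu$ and using $p=\mu q$ then yields \eqref{Stokes1}. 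Next, since $\int_\Omega g\,dx=0$, the Bogovskii operator provides $w\in H^{k+1}(\Omega)\cap H^1_0(\Omega)$ with $\div w=g$ and $\|w\|_{H^{k+1}}\le C(\Omega,k)\|g\|_{H^k}$; writing $u=v+w$ reduces the problem to finding $(v,q)$ with $\div v=0$, $-\Delta v+\nabla q=\widetilde F$, $v=0$ on $\pt\Omega$, where $\widetilde F:=\mu^{-1}F+\Delta w\in H^{k-1}(\Omega)$ satisfies $\|\widetilde F\|_{H^{k-1}}\le C(\mu^{-1}\|F\|_{H^{k-1}}+\|g\|_{H^k})$.

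On the Hilbert space $\mathcal{V}:=\{\phi\in H^1_0(\Omega)^2:\div\phi=0\}$, the form $a(v,\phi):=\int\nabla v:\nabla\phi\,dx$ is bounded and, by the Poincar\'{e} inequality of \Cref{Poincareineq2}(1), coercive, so Lax--Milgram gives a unique $v\in\mathcal{V}$ with $a(v,\phi)=\langle\widetilde F,\phi\rangle$ for all $\phi\in\mathcal{V}$ and $\|v\|_{H^1}\le C\|\widetilde F\|_{H^{-1}}$. The functional $\phi\mapsto\langle\widetilde F,\phi\rangle-a(v,\phi)$ on $H^1_0(\Omega)^2$ annihilates $\mathcal{V}$, so by de Rham's theorem (equivalently the Ne\v{c}as inequality $\|q\|_{L^2}\le C\|\nabla q\|_{H^{-1}}$ for zero-average $q$) there is a unique $q\in L^2(\Omega)$ with $\int_\Omega q\,dx=0$, $-\Delta v+\nabla q=\widetilde F$ in $\mathcal{D}'(\Omega)$, and $\|q\|_{L^2}\le C\|\widetilde F\|_{H^{-1}}$. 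Uniqueness for the original system follows because the difference of two solutions solves the homogeneous problem, hence vanishes.

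For the regularity I would invoke that the Stokes operator is elliptic in the sense of Agmon--Douglis--Nirenberg: interior estimates together with boundary estimates near $\{x_2=0,1\}$ (using the $x_1$-periodicity and local flattening) upgrade $(v,q)$ to $H^{k+1}(\Omega)\times H^k(\Omega)$ with $\|v\|_{H^{k+1}}+\|q\|_{H^k}\le C(\Omega,k)(\|\widetilde F\|_{H^{k-1}}+\|v\|_{L^2})$. Combining this with the bound $\|v\|_{L^2}\le C\|\widetilde F\|_{H^{-1}}$ from the previous step, the estimate for $\|\widetilde F\|_{H^{k-1}}$, and $u=v+w$, gives the normalized estimate, and rescaling by $\mu$ finishes the proof. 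The only genuinely technical ingredient is this boundary regularity for the Stokes system — verifying the complementing condition at the walls and carrying out the bootstrap — which is classical (Cattabriga, Temam, Galdi) and which in the paper we simply cite; the one thing to watch is that every constant remains independent of $\mu$, and this is precisely what the normalization performed in the first step guarantees.
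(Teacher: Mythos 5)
Your argument is correct in outline and is essentially the classical proof that the paper's own ``proof'' delegates to its references (\cite{Veigo1997} and Chapter IV of \cite{Galdi2011}): variational solution on the solenoidal subspace, pressure by de Rham/Ne\v{c}as, regularity by Agmon--Douglis--Nirenberg, and the $\mu$-scaling is handled correctly by your normalization $q=p/\mu$. The one step I would not let pass as stated is the lifting of the divergence: the Bogovskii estimate $\|w\|_{H^{k+1}}\leq C\|g\|_{H^{k}}$ is the standard theorem only for $g\in H^{k}_{0}(\Omega)$ with zero mean; for $g$ merely in $H^{k}(\Omega)$ (which is all the lemma assumes) the higher-order bound for the Bogovskii operator is not automatic, and the usual clean way to get such a lift is precisely via the Stokes problem itself, which would make your reduction circular at the level $k\geq 1$. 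The standard fix, and what Cattabriga/Galdi actually do, is to use the lift only at the base level $k=0$ to produce the weak solution, and then run the ADN bootstrap on the \emph{full} inhomogeneous Stokes system $(\div u=g,\ -\Delta u+\nabla q=\mu^{-1}F)$, treating the divergence equation as part of the elliptic system; this yields $\|u\|_{H^{k+1}}+\|q\|_{H^{k}}\leq C(\mu^{-1}\|F\|_{H^{k-1}}+\|g\|_{H^{k}}+\|u\|_{L^2})$ directly, after which your absorption of $\|u\|_{L^2}$ by the weak estimate finishes the proof exactly as you describe. With that adjustment the proposal is a complete and faithful reconstruction of the cited result.
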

\begin{proof}
	See \cite{Veigo1997} and the Chapter four in \cite{Galdi2011}.
\end{proof}

\subsection{System of equations for the perturbation}
In this subsection we derive the system of equations for the perturbation, and then show the local existence and uniqueness of strong solutions to the initial-boundary value problem of the resulting system. 

Recalling the notation \eqref{def}, we rewrite the system \eqref{2DCNS} as
 \begin{align}
	&\pt_t\varphi+u\cdot\nabla\varphi+\rho\div\psi+\psi\cdot\nabla\trho=0,   \label{eq1}\\
	&\rho\big(\pt_t\psi+u\cdot\nabla \psi+\psi\cdot\nabla\tu\big)
	+\ep^{-2}\nabla P(\rho,\fT)-\mu\Delta\psi-(\mu+\mup)\nabla\div\psi=0,\label{eq2}\\
	&\frac{1}{\gamma-1}\rho\big(\pt_t\theta+ u\cdot\nabla\theta+\psi\cdot\nabla\tfT\big)
	+ P(\rho,\fT)\div\psi-\kappa\Delta\theta\notag\\
	=& \ep^2\big(2\mu\Dpsi:\Dpsi+4\mu\Dtu:\Dpsi+\mup(\div\psi)^2\big).\label{eq3}
\end{align}

We consider \eqref{eq1}--\eqref{eq2} under boundary conditions
\begin{align}
	\psi|_{x_2=0,1}=0,\,\,\,\theta|_{x_2=0,1}=0, \label{eq4}
\end{align}
and the initial condition
\begin{align}
	(\varphi,\psi,\theta)^\top(0)=(\varphi_0,\psi_0,\theta_0)^\top.  \label{eq5}
\end{align}

We state the local existence result as follows:
\begin{proposition}[Local existence and uniqueness]\label{local}
	Suppose that \eqref{compatibilitycon}--\eqref{compatibilitycon1} hold. Then, there exists a small time $T$ such that there exists a local unique strong solution $(\varphi,\psi,\theta)$ to \eqref{eq1}--\eqref{eq5} on $\Omega\times[0,T]$ satisfying
	\begin{align}
		\varphi\in C([0,T];H^3(\Omega)),\,\,\,\,\,
		\psi,\theta\in C([0,T];H^3(\Omega))\cap L^2(0,T;H^4(\Omega)).\label{local0}
	\end{align}
\end{proposition}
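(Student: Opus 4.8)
The plan is to establish \Cref{local} by a standard linearization–iteration scheme for the quasilinear parabolic–hyperbolic system \eqref{eq1}--\eqref{eq5}, followed by a fixed-point argument and a continuation of the resulting solution to the maximal regularity class. First I would set up an iteration map: given $(\varphi^{(n)},\psi^{(n)},\theta^{(n)})$ in a suitable ball of the energy space
$X_T := \{\varphi\in C([0,T];H^3),\ \psi,\theta\in C([0,T];H^3)\cap L^2(0,T;H^4)\}$
(with time derivatives controlled as dictated by \eqref{compatibilitycon}--\eqref{compatibilitycon1}), define $(\varphi^{(n+1)},\psi^{(n+1)},\theta^{(n+1)})$ by solving the \emph{linear} problems obtained from \eqref{eq1}--\eqref{eq3} by freezing the transport and coefficient terms at the previous iterate. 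Concretely, $\varphi^{(n+1)}$ solves a linear transport equation with given velocity field $u^{(n)} = \tu+\psi^{(n)}$ (solvable by characteristics, preserving $H^3$ since $u^{(n)}\in L^1(0,T;W^{1,\infty})$ and the zero-average constraint $\eqref{compatibilitycon}_1$ is propagated by $\div$-structure), while $\psi^{(n+1)}$ and $\theta^{(n+1)}$ solve linear parabolic systems — a Lamé-type system with lower-order transport terms and a heat equation, respectively — with inhomogeneous Dirichlet data absorbed into the perturbation formulation so that the boundary conditions \eqref{eq4} are homogeneous. For these linear parabolic problems I would invoke the classical $L^2$ parabolic regularity theory (building on \Cref{Lame lemma} for the elliptic part together with standard energy estimates in $H^3$), which gives $\psi^{(n+1)},\theta^{(n+1)}\in C([0,T];H^3)\cap L^2(0,T;H^4)$ with norms controlled by the data and the iterate.

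The core of the argument is the uniform-in-$n$ bound: choosing $T$ small depending on the size of the initial data (and on $\ep,\mu,\mup,\kappa$, which are fixed here), a Gronwall-type estimate shows the iterates stay in a fixed ball $B_R\subset X_T$. The Sobolev embedding $H^2(\Omega)\hookrightarrow L^\infty$ in two dimensions (cf. \Cref{GN lemma}) makes all the nonlinear terms — the quadratic dissipation terms $\ep^2(2\mu\Dpsi:\Dpsi+\dots)$, the transport terms $u\cdot\nabla(\cdot)$, the pressure nonlinearity $\nabla P(\rho,\fT)$ expanded around $\tW$, and the $\psi\cdot\nabla\tW$ contributions from the Couette flow — into products of $H^3$ functions, hence controllable in $H^2$ by $R^2$, and after time integration by $T\cdot(\text{poly}(R))$; the higher derivatives are handled by commuting $\nabla^k$ with the equations and using that $\rho^{(n)}=\trho+\varphi^{(n)}$ stays bounded above and away from zero for small $T$. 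Then the contraction estimate in the \emph{lower} norm $Y_T := C([0,T];H^2)$ with $\psi,\theta\in L^2(0,T;H^3)$: differencing two iterates, all nonlinear terms produce either a small factor $T$ or a factor that is small because the differences themselves are small, giving $\|(\varphi^{(n+1)},\psi^{(n+1)},\theta^{(n+1)})-(\varphi^{(n)},\psi^{(n)},\theta^{(n)})\|_{Y_T}\le \tfrac12\|(\varphi^{(n)},\psi^{(n)},\theta^{(n)})-(\varphi^{(n-1)},\psi^{(n-1)},\theta^{(n-1)})\|_{Y_T}$ after shrinking $T$. The iterates then converge in $Y_T$; uniform boundedness in $X_T$ plus interpolation upgrades the limit to the full regularity \eqref{local0}, and weak-* lower semicontinuity gives membership in $C([0,T];H^3)\cap L^2(0,T;H^4)$ for $\psi,\theta$. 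Uniqueness follows from the same contraction estimate applied to two solutions.

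The step I expect to be the main obstacle is verifying the \emph{compatibility conditions} \eqref{compatibilitycon1} are consistent with the linear iteration and are what is needed to produce the claimed regularity up to $t=0$ — in particular, to ensure $\pt_t\psi,\pt_t\theta\in C([0,T];H^1)$ (equivalently that $\psi,\theta$ genuinely lie in the parabolic class $H^{3,3/2}$ in space-time) one needs the prescribed initial data $\psi_0,\theta_0$, the forcing, and the boundary data to match at the corner $\pt\Omega\times\{t=0\}$ at the appropriate order; this is exactly the role of requiring $\pt_t\psi(0)=\pt_t\theta(0)=0$ on $\pt\Omega$, where $\pt_t\psi(0),\pt_t\theta(0)$ are \emph{defined} through the equations as in the Remark following \Cref{main1}. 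Making the iteration scheme respect these conditions at every step — so that the limit inherits them — requires choosing the initial iterate to be the data itself and checking that the linear solvers preserve the corner compatibility; this bookkeeping, rather than any deep estimate, is the delicate part. A secondary technical point is maintaining the lower bound $\rho\ge c>0$ uniformly along the iteration, which follows from the transport structure of \eqref{eq1} and the smallness of $T$, and the zero-average condition on $\varphi$, which is preserved because integrating \eqref{eq1} over $\Omega$ and using $\div$-structure and periodicity in $x_1$ together with $\psi|_{x_2=0,1}=0$ kills the flux terms. Since \Cref{local} is standard and not the novel contribution of the paper, I would keep the write-up brief, citing the linear parabolic theory and \Cref{Lame lemma,Stokes lemma,GN lemma} and emphasizing only the points specific to the Couette background (the $\psi\cdot\nabla\tW$ terms and the $\ep$-dependent forcing).
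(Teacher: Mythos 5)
Your proposal follows exactly the route the paper takes: the paper's own proof is a one-line citation to the linearization technique, classical energy method, and Banach fixed point argument of Matsumura--Nishida \cite{MN1980,MN1983}, with all details omitted, and your sketch is a faithful (and more explicit) rendering of precisely that scheme. No discrepancy to report.
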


\begin{proof}
The proof of this proposition can be done by using the linearization technique, classical energy method and Banach fixed point argument as in \cite{MN1980,MN1983}. For simplicity, the details are omitted here.  
\end{proof}

\section{Uniform estimates}
In this section, we derive the uniform estimates (in time) for smooth solutions to the initial-boundary value problem \eqref{eq1}--\eqref{eq5}.

In the following of this section, we fix a smooth solution $(\varphi,\psi,\theta)$ to \eqref{eq1}--\eqref{eq5} on $\Omega\times[0,T]$ for a time $T>0$, and assume that the conditions \eqref{chi}--\eqref{compatibilitycon1} hold. For $0\leq t \leq T$, we define 
\begin{equation}
\begin{aligned}
A_0(t):=&\sup_{s\in [0,t]}\big(\|\psi(s)\|^2_{L^2}+\ep^{-2}\|\varphi(s)\|^2_{L^2}+\ep^{-2}\|\theta(s)\|^2_{L^2}\big)\\
       &+\int_{0}^{t}\big(\|\psi(s)\|^2_{H^1}+\ep^{-2}\|\theta(s)\|^2_{H^1}\big)ds,\\
A_1(t):=&\sup_{s\in [0,t]}\big(\|\nabla\psi(s)\|^2_{L^2}+\ep^{-2}\|\nabla\theta(s)\|^2_{L^2}  \big)
+\int_{0}^{t}\big(\|\pt_t\psi\|^2_{L^2}+\ep^{-2}\|\pt_t\theta\|^2_{L^2}\big)ds,\\
A_2(t):=&\ep^{-2}\sup_{s\in [0,t]}\|\nabla\varphi(s)\|^2_{L^2}
+\int_{0}^{t}\big(\ep^{-4}\|\varphi\|^2_{H^1}
+\ep^{-2}\|\pt_{t}\varphi\|^2_{L^2}\big)ds\\
&+\int_{0}^{t}\big(\|\psi(s)\|^2_{H^2}+\ep^{-2}\|\theta(s)\|^2_{H^2}\big)ds,\\
A_3(t):=&\sup_{s\in[0,t]}\big(\ep^{-2}\|\pt_{t}\varphi(s)\|^2_{L^2}
+\|\pt_{t}\psi(s)\|^2_{L^2}+\ep^{-2}\|\pt_{t}\theta(s)\|^2_{L^2}  \big)\\
&+\int_{0}^{t}\big(\|\nabla\pt_{t}\psi(s)\|^2_{L^2}
+\ep^{-2}\|\nabla\pt_{t}\theta(s)\|^2_{L^2}\big)ds,\label{A3}\\
A_4(t):=&\sup_{s\in[0,t]}\big(\ep^{-2}\|\nabla^2\varphi(s)\|^2_{L^2}
+\|\nabla^2\psi(s)\|^2_{L^2}+\ep^{-2}\|\nabla^2\theta(s)\|^2_{L^2}  \big)\\
&+\int_{0}^t\big(\ep^{-4}\|\varphi(s)\|^2_{H^2}+\|\psi(s)\|^2_{H^3}
+\ep^{-2}\|\theta(s)\|^2_{H^3}\big)ds,\\
A_5(t):=&\sup_{s\in[0,t]}\big(\ep^{-2}\|\nabla^3\varphi(s)\|^2_{L^2}
+\|(\nabla^3\psi,\nabla\pt_{t}\psi)(s)\|^2_{L^2}+\ep^{-2}\|(\nabla^3\theta,\nabla\pt_{t}\theta)(s)\|^2_{L^2} \big)\\
&+\int_{0}^{t}\big( \|\pt^2_{t}\psi(s)\|^2_{L^2}+\ep^{-2}\|\pt^2_{t}\theta(s)\|^2_{L^2}\big)ds,
\end{aligned}
\end{equation}
and
\begin{align}
N(t):=&\ep^{-2}A_0(t)+A_1(t)+A_2(t)+\ep^{2}A_3(t)+\ep^{2}A_4(t)+\ep^{4}A_5(t).\label{N}
\end{align}
For simplicity, we use the notation that
\begin{align}
	\Nhat:=N(T),   \notag
\end{align}
and denote by $C$ a generic positive constant depending only on $\Re$, $\Pr$, $\frac{\mup}{\mu}$ and $\gamma$, but not on $\ep$ and $T$. In addition, we denote by $\tC$ a generic positive constant depending only on $\Pr$, $\frac{\mup}{\mu}$ and $\gamma$, but not on $\Re$, $\ep$ and $T$.

Moreover, recalling \eqref{chi}, since we focus on the case with low Mach number and small perturbations, we always assume that
\begin{align}
	\ep\leq1,\quad |1-\chi|\leq \frac{1}{2},  \quad \Nhat\leq 1.  \label{focus}
\end{align}	 

The main result of this section can be concluded as follows.
\begin{proposition} \label{priori}
	Suppose that \eqref{chi}--\eqref{compatibilitycon1} holds. Then, there exists a positive constant $\Re^\prime$ depending only on $\Pr$, $\frac{\mup}{\mu}$ and $\gamma$, and positive constants $\vep^\prime$ and $N^\prime$ depending only on $\Re$, $\Pr$, $\frac{\mup}{\mu}$ and $\gamma$, such that if 
 \begin{align}
	\Re<\Re^\prime, \quad \ep<\vep^\prime, \quad \Nhat <N^\prime,
\end{align}
    then it holds that
    	\begin{align}
    	\Nhat\leq \hat{C}N(0).  
    \end{align}   
   Here, $\hat{C}$ is a positive constant depending only on $\Re$, $\Pr$, $\frac{\mup}{\mu}$ and $\gamma$.
\end{proposition}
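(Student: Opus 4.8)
The plan is to prove Proposition~\ref{priori} by a bootstrap (continuity) argument built on a hierarchy of $\ep$-weighted energy estimates for the perturbation system \eqref{eq1}--\eqref{eq3}, carried out in the order suggested by the definitions of $A_0,\dots,A_5$. The overall strategy: assume $\Nhat$ is small, derive a closed inequality $\Nhat \le \hat C N(0) + (\text{small})\,\Nhat + (\text{higher powers of }\Nhat)$, and then absorb the small terms. The smallness comes from three independent small parameters --- $\Re$ (so $\mu,\kappa$ are large and the dissipation dominates), $\ep$ (the Mach number, so the acoustic/pressure terms are controlled), and $\Nhat$ itself (so the nonlinear terms are quadratically small) --- together with $|1-\chi|=O(\ep)$, which makes the Couette profile $\tW$ close to the motionless state $(1,\tu,1)$ with $\div\tu=0$.

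First I would establish the \textbf{basic energy estimate} controlling $A_0$. Following the relative-entropy idea mentioned in the introduction, one multiplies \eqref{eq2} by $\psi$, \eqref{eq1} by an appropriate function of $(\varphi,\theta)$ (the second derivative of the pressure potential), and \eqref{eq3} by $\theta/\tfT$ or similar, then integrates over $\Omega$ and sums. The key structural point is that $\div\tu=0$, so the ``bad'' convective term $\int \rho\,\psi\cdot\nabla\tu\cdot\psi\,dx$ is bounded by $\|\nabla\tu\|_{L^\infty}\|\psi\|_{L^2}^2 = \|\psi\|_{L^2}^2$ and, crucially, the transport terms $\int u\cdot\nabla(\cdot)$ produce no uncontrolled boundary or divergence contributions. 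The pressure term, after using the state equation $P=\rho\fT$ and the relative-entropy weighting, becomes a positive quadratic form $\ep^{-2}(\|\varphi\|_{L^2}^2+\|\theta\|_{L^2}^2)$ modulo lower-order errors; the viscous and heat terms give $\mu\|\nabla\psi\|_{L^2}^2 + \kappa\|\nabla\theta\|_{L^2}^2$, and Poincar\'e (Lemma~\ref{Poincareineq2}, using $\psi,\theta\in H^1_0$) upgrades these to full $H^1$-control, giving the $\int_0^t(\|\psi\|_{H^1}^2 + \ep^{-2}\|\theta\|_{H^1}^2)ds$ terms in $A_0$. The right-hand side collects $\ep^2$-weighted dissipation (from $\ep^2(2\mu|\Dpsi|^2+\dots)$), terms $\propto |1-\chi| = O(\ep)$ from the non-constant part of $\tfT$ and $\trho$, and cubic nonlinearities $\propto \Nhat^{1/2}\cdot(\text{dissipation})$ --- all absorbable.

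Next come the \textbf{higher-order estimates} $A_1$ through $A_5$. For $A_1$: differentiate \eqref{eq2}--\eqref{eq3} in $t$, multiply by $\pt_t\psi,\pt_t\theta$, integrate; the time derivative lands favorably on the viscous/heat terms giving $\int_0^t(\|\pt_t\psi\|_{L^2}^2 + \ep^{-2}\|\pt_t\theta\|_{L^2}^2)ds$, and the $\sup$ of $\|\nabla\psi\|_{L^2}^2 + \ep^{-2}\|\nabla\theta\|_{L^2}^2$ follows because, on $\pt\Omega$, $\pt_t\psi = \pt_t\theta = 0$ (from the compatibility conditions \eqref{compatibilitycon1} propagated in time), so integration by parts is legitimate. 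For the \textbf{density/elliptic estimates} $A_2$ (and $A_4,A_5$ for higher derivatives): here one cannot get $\varphi$-regularity from dissipation since \eqref{eq1} is pure transport; instead, rewrite \eqref{eq2} as a stationary Stokes (or Lam\'e) system for $\psi$ with $\ep^{-2}\nabla(\br^{-1}\varphi + \dots)$ on one side, invoke Lemma~\ref{Stokes lemma}/Lemma~\ref{Lame lemma} using $\int_\Omega\varphi\,dx = 0$ (from $\eqref{compatibilitycon}_1$ and conservation of mass), to trade $\ep^{-2}\|\nabla\varphi\|$ against $\|\psi\|_{H^2}$, $\|\pt_t\psi\|$, and nonlinear terms --- this is exactly the Matsumura--Nishida / Kagei mechanism cited. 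Iterating up: $A_3$ uses $\pt_t$-differentiated equations plus elliptic estimates on $\pt_t\varphi$; $A_4$ uses spatial derivatives and $H^3$-elliptic estimates; $A_5$ uses $\pt_t^2$-differentiated equations (for which the compatibility condition at $t=0$ again enters, defining $\pt_t^2\psi(0)$ etc.). At each level one must verify that all nonlinear commutators and the terms generated by the non-constant coefficients $\trho,\tfT,\tu$ are either $O(\ep)$, $O(\mu) = O(\Re)$ where a large factor helps, or $O(\Nhat^{1/2})$ times a controlled quantity.

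The \textbf{main obstacle} I anticipate is the interplay between the non-slip boundary conditions and the $\ep$-weights in the normal-derivative estimates: with inhomogeneous Dirichlet data one cannot integrate by parts against $\Delta\psi$ to extract $\|\nabla^2\psi\|_{L^2}$ directly (the boundary terms do not vanish and no good structure like Navier-slip is available), so one is forced to route \emph{all} normal second-derivative control for $\psi$ and $\theta$ through the elliptic theory of Lemmas~\ref{Lame lemma}--\ref{Stokes lemma}, which in turn feeds the pressure/density term $\ep^{-2}\nabla(\text{function of }\varphi,\theta)$ back in --- and closing this loop without losing powers of $\ep$ requires the precise $\ep$-weighting in $N(t)$ (note the asymmetric weights: $\varphi,\theta$ carry $\ep^{-1}$ relative to $\psi$) to be exactly matched at every level $A_k$. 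A secondary difficulty is controlling the convective terms $\tu\cdot\nabla(\cdot)$ and $\psi\cdot\nabla\tu$ in the high-order ($A_4,A_5$) estimates, where $\nabla\tu$ (though constant) multiplies top-order derivatives; here one must again exploit $\div\tu=0$ and the $\ep$-weighted $L^2_TH^1$ bounds on the perturbations to avoid a loss, combined with Gagliardo--Nirenberg (Lemma~\ref{GN lemma}) to handle the genuinely nonlinear products. Once all six estimates are summed with the weights in \eqref{N}, choosing $\Re'$ small (to beat the $O(\Re)$ terms and pin down the dissipation constants), then $\vep'$ small (to beat the $O(\ep)$ terms coming from $|1-\chi|$ and the frictional heating), then $N'$ small (to absorb the quadratic/cubic nonlinearities) yields $\Nhat \le \hat C N(0)$.
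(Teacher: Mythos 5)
Your proposal follows essentially the same route as the paper: a relative-entropy basic estimate for $A_0$ exploiting $\div\tu=0$ and $|1-\chi|=O(\ep)$, an $\ep$-weighted energy hierarchy for $A_1,\dots,A_5$ in which density regularity is recovered from the Stokes/Lam\'e elliptic theory (Lemmas~\ref{Stokes lemma} and \ref{Lame lemma}) using $\int_\Omega\varphi\,dx=0$, a tangential/normal splitting forced by the non-slip boundary, and a final absorption by taking $\Re$, $\ep$ and $\Nhat$ small in that order. The only minor discrepancy is bookkeeping: the paper obtains $A_1$ by testing the undifferentiated equations with $\pt_t\psi,\pt_t\theta$ and reserves the time-differentiated equations for $A_3$, whereas you assign the time-differentiated estimate to $A_1$; this does not affect the validity of the overall scheme.
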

\begin{proof}
	 \Cref{priori} comes from \Cref{basic}, \Cref{lemmaH1}, \Cref{lemmaH2,lemmaH3} below.                           
\end{proof}

\subsection{Basic energy estimate}
This subsection is devoted to establishing the basic energy estimate for $A_0(t)$. 

We start with the following Poincar\'{e} type inequality. 
\begin{lemma}  \label{Poin}
	Suppose that the compatibility conditions \eqref{compatibilitycon1}--\eqref{compatibilitycon1} holds. Then, the following Poincar\'{e} type inequalities hold:  
	\begin{align}
		\|\psi(t)\|_{L^2}\leq \tC \| \nabla\psi(t)\|_{L^2},\,\,\,\,\, \|\theta(t)\|_{L^2}\leq \tC \| \nabla\theta(t)\|_{L^2},
		\,\,\,\,\,\,\, \forall \, t \in [0, T], \label{Poincare}
	\end{align}	
and 
 	\begin{align}
 		\|\varphi(t)\|_{L^2}
 	\leq \tC \| \nabla\varphi(t)\|_{L^2},
 	\,\,\,\,\,\,\, \forall \, t \in [0, T]. \label{Poincarerho}
 \end{align}	
 
\end{lemma}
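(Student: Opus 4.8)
The plan is to establish the three estimates separately: the bounds for $\psi$ and $\theta$ rely only on the homogeneous Dirichlet conditions at the horizontal walls, while the bound for $\varphi$ rests on the fact that $\varphi(t)$ has zero spatial average for every $t$.

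First, for $\psi$ and $\theta$ I would use the boundary conditions \eqref{eq4}. For any scalar function $f$ with $f|_{x_2=0}=0$ one has $f(x_1,x_2)=\int_0^{x_2}\pt_2 f(x_1,s)\,ds$, whence by the Cauchy--Schwarz inequality $|f(x_1,x_2)|^2\le x_2\int_0^1|\pt_2 f(x_1,s)|^2\,ds$; integrating over $\Omega$ gives $\|f\|_{L^2}^2\le \tfrac12\|\pt_2 f\|_{L^2}^2\le\tfrac12\|\nabla f\|_{L^2}^2$. Applying this to $\theta$ and to each component of $\psi$ yields \eqref{Poincare}, with a constant depending only on the width of $\Omega$, hence of the admissible type $\tC$. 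Alternatively, \eqref{Poincare} follows directly from Lemma \ref{Poincareineq2}(2) with $\Sigma=\{x_2=0\}$, the boundary integral being zero by \eqref{eq4}.

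Next, for $\varphi$ the key preliminary step is to check that $\int_\Omega\varphi(t)\,dx=0$ for every $t\in[0,T]$. Integrating the continuity equation in conservative form $\pt_t\rho+\div(\rho u)=0$ over $\Omega$, and using the impermeability condition $u^2|_{x_2=0,1}=0$ from \eqref{boundarycon2} together with the $x_1$-periodicity, one obtains $\tfrac{d}{dt}\int_\Omega\rho(t)\,dx=0$. Since $\int_\Omega\trho\,dx$ is a fixed constant and $\int_\Omega\varphi_0\,dx=0$ by the assumption in \eqref{compatibilitycon}, it follows that $\int_\Omega\varphi(t)\,dx=\int_\Omega\varphi_0\,dx=0$ for all $t$. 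With this zero-mean property, \eqref{Poincarerho} is precisely Lemma \ref{Poincareineq2}(1) with $q=2$, $n=2$ and $\Omega^b=\Omega$; the constant $c_1$ there depends only on $\Omega$, so it may be taken to be $\tC$.

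There is no genuine obstacle here; the only point requiring a little care is the bookkeeping of constants, namely verifying that every constant that enters is purely geometric (depending on $\Omega$ alone) and therefore legitimately independent of $\Re$, $\ep$ and $T$, as required for a constant of type $\tC$. This is immediate from the explicit one-dimensional estimate above and from the statement of Lemma \ref{Poincareineq2}.
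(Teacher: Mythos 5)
Your proposal is correct and follows essentially the same route as the paper: both parts \eqref{Poincare} and \eqref{Poincarerho} are reduced to Lemma \ref{Poincareineq2}, with the zero-mean property of $\varphi(t)$ obtained from conservation of mass together with $\int_\Omega\varphi_0\,dx=0$. The explicit one-dimensional integration you give for \eqref{Poincare} is a harmless elementary substitute for the citation of Lemma \ref{Poincareineq2} and changes nothing of substance.
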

\begin{proof}
	First, \eqref{Poincare} follows directly from the boundary condition \eqref{eq4} and \Cref{Poincareineq2}. 
	
	Next, it follows from \eqref{eq1} and \eqref{compatibilitycon} that the conservation of mass reads
	\begin{align}
		\int\rho(t)dx=	\int\rho_0 dx=\int\trho dx.   \notag
	\end{align}
	This yields that
	\begin{align}
		\int\varphi(t)dx=0.   \notag
	\end{align}
	Consequently, we obtain \eqref{Poincarerho} from \Cref{Poincareineq2}.	
	
	The proof is completed.
\end{proof}

Next, we show some elementary observations which will be used frequently.
\begin{lemma}\label{elementary}
	Suppose that \eqref{chi} holds. Then, there exists a positive constant $\vep_0$ depending only on $\Pr$, $\gamma$ and $\frac{\mup}{\mu}$, such that if $\ep<\vep_0$, then	
		\begin{align}
			\inf_{x\in\Omega}\trho(x)>\frac{3}{4},	\quad\inf_{x\in\Omega}\tfT(x)>\frac{3}{4},
			\quad \|\trho-1\|_{L^\infty\cap H^4}\leq \tC\ep,
			\quad \|\tfT-1\|_{L^\infty \cap H^4}\leq \tC\ep,      \label{ob0}
		\end{align}
	and
	\begin{equation}
	\begin{aligned}
		&\inf_{(x,t)\in\Omega\times[0,T]}\rho(x,t)>\frac{1}{2},	\quad\inf_{(x,t)\in\Omega\times[0,T]}\fT(x,t)>\frac{1}{2},
		\quad \|(\varphi,\theta)(t)\|^2_{L^\infty}\leq \tC\Nhat\ep^2,\\
		& \|\psi(t)\|^2_{L^\infty}\leq \tC\Nhat, \quad
		\|(\nabla\varphi,\nabla\theta)(t)\|^2_{L^\infty}\leq \tC\Nhat,
      \quad \|\nabla\psi(t)\|^2_{L^\infty}\leq \tC\Nhat\ep^{-2},\quad\forall t\in[0,T].  \label{infW}
	\end{aligned}
    \end{equation}
\end{lemma}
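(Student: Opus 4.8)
The statement splits into two groups of estimates: those for the Couette flow $\tW$ alone, which follow from the explicit formulas \eqref{Couette1} and the hypothesis \eqref{chi}, and those for the full solution $W=\tW+(\varphi,\psi,\theta)$, which are consequences of the definition of $\Nhat$ together with the Poincar\'e, Gagliardo--Nirenberg and Sobolev inequalities of \Cref{GN lemma} and \Cref{Poincareineq2}. The plan is to treat the two groups in turn.

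For \eqref{ob0}: from \eqref{Couette1}, $\tfT=\chi+(1-\chi)x_2-\frac{\ep^2\Pr}{2C_p}(x_2^2-x_2)$, so $\tfT-1=(\chi-1)(1-x_2)-\frac{\ep^2\Pr}{2C_p}(x_2^2-x_2)$. Using $|1-\chi|=O(\ep)$ from \eqref{chi}, each term is $O(\ep)$ uniformly on $\overline\Omega$, and since $\tfT-1$ is a fixed low-degree polynomial in $x_2$, all its $H^4(\Omega)$ and $L^\infty(\Omega)$ norms are bounded by $\tC\ep$ for a constant $\tC$ depending only on $\Pr$, $\gamma$ (through $C_p$), and $\frac{\mup}{\mu}$ (vacuously). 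This gives the last two inequalities in \eqref{ob0}, and for $\ep$ smaller than some $\vep_0=\vep_0(\Pr,\gamma,\frac{\mup}{\mu})$ it forces $\inf_\Omega\tfT>\frac34$; then $\trho=(\tfT)^{-1}$ gives $\inf_\Omega\trho>\frac34$ as well (shrinking $\vep_0$ if needed so $\tfT<\frac43$), and $\trho-1=(1-\tfT)/\tfT$ together with $\inf\tfT>\frac34$ and the algebra property of $H^4$ (valid since $n=2$, $H^4\hookrightarrow C^2$ an algebra) yields $\|\trho-1\|_{L^\infty\cap H^4}\le \tC\ep$.

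For \eqref{infW}: the interpolation/Sobolev bounds come from the second inequality in \Cref{GN lemma}, namely $\|f\|_{L^\infty}\le C\|f\|_{L^2}^{1/2}\|\nabla^2 f\|_{L^2}^{1/2}+C\|f\|_{L^2}$. Applying this with $f=\varphi$: using the Poincar\'e inequality \eqref{Poincarerho} (so $\|\varphi\|_{L^2}\le\tC\|\nabla\varphi\|_{L^2}$) one gets $\|\varphi\|_{L^\infty}^2\le \tC(\|\nabla\varphi\|_{L^2}\|\nabla^2\varphi\|_{L^2}+\|\nabla\varphi\|_{L^2}^2)\le\tC(\ep^2 A_2^{1/2}A_4^{1/2}+\ep^2 A_2)\le\tC\ep^2(\ep^2A_4+A_2)$, hence $\le\tC\Nhat\ep^2$ after recalling \eqref{N} and $\ep\le1$ from \eqref{focus}; the bound for $\theta$ is identical using \eqref{Poincare} and the $\ep^{-2}$-weighted $\theta$-terms in $A_0,\dots,A_4$. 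For $\psi$, the same application of \Cref{GN lemma} with \eqref{Poincare} gives $\|\psi\|_{L^\infty}^2\le\tC(\|\nabla\psi\|_{L^2}\|\nabla^2\psi\|_{L^2}+\|\nabla\psi\|_{L^2}^2)\le\tC(A_1^{1/2}(\ep^{-2}A_4)^{1/2}+A_1)$; since $N(t)$ contains $A_1$ unweighted and $\ep^2A_4$, this is $\le\tC\Nhat$. For the gradient bounds, apply \Cref{GN lemma} to $f=\nabla\varphi,\nabla\theta,\nabla\psi$ (each in $H^2$ by \eqref{local0}), obtaining $\|\nabla\varphi\|_{L^\infty}^2\le\tC(\|\nabla\varphi\|_{L^2}\|\nabla^3\varphi\|_{L^2}+\|\nabla\varphi\|_{L^2}^2)\le\tC\Nhat$ from the $\ep^{-2}\|\nabla\varphi\|_{L^2}^2$ in $A_2$ and $\ep^{-2}\|\nabla^3\varphi\|_{L^2}^2$ in $A_5$ (weighted by $\ep^4$ in $N$, so the cross term is $\le\tC(A_2\cdot\ep^4A_5)^{1/2}\cdot\ep^{-2}\cdot\ep^{-2}$... one must track the powers carefully), similarly $\|\nabla\theta\|_{L^\infty}^2\le\tC\Nhat$, while for $\psi$ the extra $\ep^{-2}$ appears because $\|\nabla^3\psi\|_{L^2}^2$ sits in $\ep^4A_5$ but $\|\nabla^2\psi\|_{L^2}^2$ sits in $\ep^2A_4$, giving $\|\nabla\psi\|_{L^\infty}^2\le\tC(\ep^{-2}A_4\cdot\ep^{-4}A_5)^{1/2}+\tC\ep^{-2}A_4\le\tC\Nhat\ep^{-2}$. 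Finally, the pointwise bounds on $\rho$ and $\fT$: write $\rho=\trho+\varphi$, so $\inf_\Omega\rho\ge\inf_\Omega\trho-\|\varphi\|_{L^\infty}\ge\frac34-(\tC\Nhat)^{1/2}\ep$, which exceeds $\frac12$ once $\ep<\vep'$ with $\vep'$ depending on the displayed constants (using $\Nhat\le1$); likewise $\inf_\Omega\fT>\frac12$.

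\textbf{Main obstacle.} The substantive bookkeeping is matching each $L^\infty$ bound to exactly the right combination of $A_i$'s and $\ep$-powers in the definition \eqref{N} of $N(t)$, so that the interpolation cross-terms land with nonnegative $\ep$-exponents and can be absorbed using $\ep\le1$; there is no conceptual difficulty beyond careful tracking of the weights, and the hypothesis $\Nhat\le1$ in \eqref{focus} is what lets one replace, e.g., $\Nhat^{1/2}$ by $1$ where convenient. The only genuinely new input beyond routine inequalities is the Poincar\'e inequality \eqref{Poincarerho} for $\varphi$, which was already established in \Cref{Poin} using the zero-average property $\int_\Omega\varphi(t)\,dx=0$ inherited from conservation of mass.
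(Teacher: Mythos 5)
Your overall strategy is the same as the paper's: explicit computation from \eqref{Couette1} and \eqref{chi} for the bounds on $\tW$ in \eqref{ob0}, and \Cref{GN lemma} combined with the definition \eqref{N} of $N(t)$ for the bounds in \eqref{infW}. The treatment of \eqref{ob0} and of the infima of $\rho$ and $\fT$ is fine. However, the route you take to the $L^\infty$ bounds on $\varphi$, $\theta$ and $\psi$ contains a step that fails: inserting the Poincar\'e inequality to replace $\|\varphi\|_{L^2}$ by $\|\nabla\varphi\|_{L^2}$ before interpolating loses a power of $\ep$, because in \eqref{N} the zeroth-order norms carry a strictly heavier $\ep$-weight than the first-order ones ($\ep^{-2}A_0$ versus $A_1$, $A_2$). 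Concretely, from $\|\nabla\varphi\|^2_{L^2}\le\ep^2 A_2\le\ep^2\Nhat$ and $\|\nabla^2\varphi\|^2_{L^2}\le\ep^2A_4\le\Nhat$ one only gets $\|\nabla\varphi\|_{L^2}\|\nabla^2\varphi\|_{L^2}\le\ep\Nhat$, i.e. $\|\varphi\|^2_{L^\infty}\le\tC\Nhat\ep$ rather than the stated $\tC\Nhat\ep^2$; the inequality $A_2^{1/2}A_4^{1/2}\le\ep^2A_4+A_2$ that you use to close the gap is false (Young only gives $A_2^{1/2}A_4^{1/2}=\ep^{-1}A_2^{1/2}(\ep^2A_4)^{1/2}\le\tfrac{1}{2\ep}\big(A_2+\ep^2A_4\big)$). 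The same loss occurs for $\psi$: $A_1^{1/2}A_4^{1/2}\le\ep^{-1}\Nhat$, so your chain yields $\|\psi\|^2_{L^\infty}\le\tC\Nhat\ep^{-1}$ rather than $\tC\Nhat$.

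The fix is to drop the Poincar\'e step entirely, as the paper does: apply \Cref{GN lemma} with the $L^2$ norms themselves, reading off $\|\varphi\|^2_{L^2}\le\ep^4\Nhat$, $\|\theta\|^2_{L^2}\le\ep^4\Nhat$ and $\|\psi\|^2_{L^2}\le\ep^2\Nhat$ from the term $\ep^{-2}A_0$ in \eqref{N}, together with $\|\nabla^2(\varphi,\theta)\|^2_{L^2}\le\Nhat$ and $\|\nabla^2\psi\|^2_{L^2}\le\ep^{-2}\Nhat$; this gives $\tC\Nhat\ep^2$ and $\tC\Nhat$ directly. Your gradient bounds reach the correct conclusions, but the displayed intermediate expression $(A_2\cdot\ep^4A_5)^{1/2}\cdot\ep^{-2}\cdot\ep^{-2}$ does not track the weights correctly either; the clean computation is $\|\nabla\varphi\|_{L^2}\|\nabla^3\varphi\|_{L^2}\le(\ep^2A_2)^{1/2}(\ep^2A_5)^{1/2}\le\ep^2\big(\Nhat\cdot\ep^{-4}\Nhat\big)^{1/2}=\Nhat$.
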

\begin{proof}
	Recalling of \eqref{Couette1}, we first obtain from \eqref{chi} that
	\begin{align}
		\|\tfT-1\|_{L^\infty}\leq |\chi-1|+\tC\ep^2\leq \tC\ep+\tC\ep^2\leq \tC\ep.  \notag
	\end{align}
    This yields
     \begin{align}
      \inf_{x\in\Omega}\tfT(x)>\frac{3}{4}, \label{ob1}
     \end{align}
 provided that $\ep$ is small enough. Based on an analogous argument, we derive \eqref{ob0}.
    
    Next, it follows from \eqref{N} that
    \begin{align}
    	\|\theta(t)\|^2_{L^2}\leq \Nhat\ep^{4},\quad 	\|\nabla^2\theta(t)\|^2_{L^2}\leq \Nhat,\quad\forall t\in[0,T],  \notag
    \end{align}
     which, together with \Cref{GN lemma}, leads to
\begin{align}
	\sup_{t\in [0,T]}\|\theta(t)\|^2_{L^\infty}
	\leq \tC\sup_{t\in [0,T]}(\|\theta\|_{L^2}\|\nabla^2\theta\|_{L^2}
	+\|\theta\|^2_{L^2})\leq  \tC \Nhat\ep^2 \leq \frac{1}{4},  \label{ob2}
\end{align}     
provided that $\ep$ is small enough. Thus, \eqref{ob1} and \eqref{ob2} imply
\begin{align}
\inf_{(x,t)\in\Omega\times[0,T]}\fT(x,t)\geq \inf_{x\in\Omega}\tfT(x)-\sup_{t\in [0,T]}\|\theta(t)\|_{L^\infty}>\frac{1}{2}.
\end{align}
Other inequalities in \eqref{infW} can be obtained similarly.

The proof is completed.
\end{proof}

Now, we are in a position to derive the basic energy estimate.
\begin{lemma}   \label{basic}
 Suppose that \eqref{chi} holds. Then, there exist positive constants $\Re_0$ and $\vep_1$ depending only on $\Pr$, $\gamma$ and $\frac{\mup}{\mu}$, such that if $\Re<\Re_0$ and $\ep<\vep_1$, then	
\begin{align}
	&A_0(t)\leq CA_0(0), \quad\quad \forall t\in[0,T]. \label{eneA0}
\end{align}
\end{lemma}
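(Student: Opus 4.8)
The plan is to derive a relative-entropy-type identity for the perturbation, exploiting crucially the fact that $\div\tu=0$ so that the convective terms generated by the background Couette flow do not produce uncontrollable high-order contributions. First I would introduce the relative entropy functional
$$
\mathcal{E}(t)=\int\Big(\frac{1}{2}\rho|\psi|^2+\ep^{-2}\big(P(\rho,\fT)-P(\trho,\tfT)-\partial_\rho P(\trho,\tfT)\varphi-\partial_\fT P(\trho,\tfT)\theta\big)+\tfrac{1}{\gamma-1}\rho\,\Phi(\fT/\tfT)\Big)dx,
$$
or an equivalent combination of $\|\psi\|_{L^2}^2$, $\ep^{-2}\|\varphi\|_{L^2}^2$ and $\ep^{-2}\|\theta\|_{L^2}^2$ obtained by expanding the state equation $P=\rho\fT$ and using the bounds $\inf\trho,\inf\tfT>3/4$ and $\inf\rho,\inf\fT>1/2$ from \Cref{elementary}; these show that $\mathcal{E}(t)$ is comparable to $\|\psi(t)\|_{L^2}^2+\ep^{-2}\|\varphi(t)\|_{L^2}^2+\ep^{-2}\|\theta(t)\|_{L^2}^2$ up to constants depending only on $\Pr,\gamma,\mup/\mu$. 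Then I would compute $\frac{d}{dt}\mathcal{E}(t)$ using equations \eqref{eq1}--\eqref{eq3}: the pressure/work terms cancel against the density and temperature equations in the usual relative-entropy fashion, the viscosity term $-\mu\Delta\psi-(\mu+\mup)\nabla\div\psi$ paired with $\psi$ gives (after integration by parts, using $\psi|_{\pt\Omega}=0$) the positive dissipation $\mu\|\nabla\psi\|_{L^2}^2+(\mu+\mup)\|\div\psi\|_{L^2}^2$, and the heat term $-\kappa\Delta\theta$ paired with $\ep^{-2}\theta$ gives $\kappa\ep^{-2}\|\nabla\theta\|_{L^2}^2$ (again using $\theta|_{\pt\Omega}=0$).

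The main point is to bound the remaining terms on the right-hand side. These fall into three groups: (i) the convective terms $u\cdot\nabla(\cdot)$ and the background-transport terms $\psi\cdot\nabla\trho$, $\psi\cdot\nabla\tu$, $\psi\cdot\nabla\tfT$; (ii) the quadratic pressure/work remainders coming from the Taylor expansion; and (iii) the viscous heating $\ep^2(2\mu|\Dpsi|^2+4\mu\Dtu:\Dpsi+\mup(\div\psi)^2)$ paired with $\ep^{-2}\theta$. For (i), writing $u=\tu+\psi$, the genuinely dangerous pieces are those linear in the perturbation with a coefficient from the background flow; here I use that $\|\nabla\trho\|_{L^\infty}, \|\nabla\tu\|_{L^\infty}, \|\nabla\tfT\|_{L^\infty}\leq \tC$ (in fact $\|\nabla\tfT\|$ and $\|\nabla\trho\|$ are $O(\ep)+O(1-\chi)=O(\ep)$ while $\|\nabla\tu\|_{L^\infty}=1$), and — this is the key structural observation advertised in the introduction — $\div\tu=0$ lets me integrate $\tu\cdot\nabla$ by parts without boundary contribution (since $\tu^2=0$ on $\pt\Omega$) and without a $\div\tu$ error term, so such terms become skew and drop out or are absorbed. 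The terms involving $\psi\cdot\nabla\tu$ paired with $\rho\psi$ are bounded by $\tC\|\psi\|_{L^2}^2$, which, by the Poincaré inequality \eqref{Poincare}, is $\leq\tC\|\nabla\psi\|_{L^2}^2$; the smallness of $\tC$ relative to $\mu$ is where the hypothesis $\Re<\Re_0$ (equivalently $\mu=1/\Re$ large — wait, $\mu$ small) enters, so instead this term is absorbed only after noting its coefficient is $O(1)$ and the dissipation coefficient is $\mu=1/\Re$; hence one must be more careful — the term $\psi\cdot\nabla\tu$ contributes $\int\rho\psi^2\partial_2\tu^1\,dx$ which, by Cauchy–Schwarz in the form $\psi^1\psi^2$, is actually controlled by $\tfrac{\mu}{2}\|\nabla\psi\|_{L^2}^2$ plus $C_\mu\|\psi\|_{L^2}^2$ only if $\mu$ is bounded below — so the correct route is to keep $\int\rho\psi^1\psi^2\,dx$ on the left as a lower-order term dominated by $\mathcal{E}$ via Poincaré, yielding a Grönwall inequality $\frac{d}{dt}\mathcal{E}+c\,(\|\nabla\psi\|^2+\ep^{-2}\|\nabla\theta\|^2)\leq C\mathcal{E}$ only if the bad term is genuinely small; this forces the smallness of $\Re$. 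For (ii) and (iii), every remainder is at least quadratic in $(\varphi,\theta,\psi)$ or carries a factor $\ep$ or $\ep^2$, so by \Cref{GN lemma}, \Cref{elementary} and $\Nhat\leq 1$ they are bounded by $\tC(\Nhat^{1/2}+\ep)$ times the dissipation plus $\mathcal{E}$, hence absorbed for $\ep<\vep_1$ and $\Nhat$ small.

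Assembling these estimates yields a differential inequality of the form
$$
\frac{d}{dt}\mathcal{E}(t)+c_0\big(\|\nabla\psi(t)\|_{L^2}^2+\ep^{-2}\|\nabla\theta(t)\|_{L^2}^2\big)\leq 0
$$
once $\Re<\Re_0$ and $\ep<\vep_1$ are small enough that all the "bad" coefficients are beaten by the dissipation — here I again invoke \eqref{Poincare} to convert $\|\nabla\psi\|_{L^2}^2+\ep^{-2}\|\nabla\theta\|_{L^2}^2$ into control of $\mathcal{E}$ itself if needed, and to control the $\|\psi\|_{H^1}^2+\ep^{-2}\|\theta\|_{H^1}^2$ integrand appearing in the definition of $A_0$. (I do not expect the dangerous convective term to be killed purely by the dissipation; rather, the mechanism is that $\int\rho\psi^1\psi^2\partial_2\tu^1$ has a sign-indefinite but small coefficient after using $\Re<\Re_0$, or alternatively one tolerates a term $+C\mathcal{E}$ on the right and closes via Grönwall together with the fact that the dissipation already dominates it when $\Re$ is small — this is the part I expect to require the most care.) Integrating in $t$ and using that $\mathcal{E}(t)$ is comparable to the $\sup$-part of $A_0(t)$ while $\int_0^t(\|\nabla\psi\|^2+\ep^{-2}\|\nabla\theta\|^2)ds$ together with \eqref{Poincare} controls the integral part of $A_0(t)$, we obtain $A_0(t)\leq C\,\mathcal{E}(0)\leq C A_0(0)$ for all $t\in[0,T]$, which is \eqref{eneA0}. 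The main obstacle, as indicated, is handling the background-shear convective term $\psi\cdot\nabla\tu$: it is the only term that is linear in the perturbation with an $O(1)$ coefficient and no spare power of $\ep$, and it is precisely this term whose control dictates the smallness of the Reynolds number $\Re<\Re_0$.
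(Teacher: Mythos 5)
Your proposal follows essentially the same route as the paper: a relative entropy comparable to $\|\psi\|_{L^2}^2+\ep^{-2}\|\varphi\|_{L^2}^2+\ep^{-2}\|\theta\|_{L^2}^2$ (the paper's $\eta$ in \eqref{equiv1} is exactly your functional multiplied by $\ep^2$), time differentiation using \eqref{eq1}--\eqref{eq3}, extraction of the viscous and heat dissipation, Poincar\'e's inequality \eqref{Poincare}, and absorption of the remaining terms under the smallness of $\Re$, $\ep$ and $\Nhat$. The outline and the final differential inequality are correct.

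The one place where you go astray is the treatment of the shear term $\int\rho(\psi\cdot\nabla\tu)\cdot\psi\,dx=\int\rho\,\psi^1\psi^2\,\pt_2\tu^1\,dx$, which you correctly single out as the crux but then handle with a confused parenthetical (``$\mu=1/\Re$ large --- wait, $\mu$ small'') and a fallback to Gr\"onwall. Since $\mu=1/\Re$ by \eqref{ReMAnew}, the hypothesis $\Re<\Re_0$ makes $\mu$ \emph{large}, not small: the dissipation coefficient is $c/\Re$, while the shear term is bounded by $\tC\|\psi\|_{L^2}^2\leq \tC c_P^2\|\nabla\psi\|_{L^2}^2$ with $\tC$ independent of $\Re$, so it is absorbed \emph{directly} by the dissipation once $\Re<\Re_0:=c/(2\tC c_P^2)$ --- this is precisely how the paper disposes of its term $I_2$ in \eqref{I0}, and no Gr\"onwall argument is needed. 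Note moreover that the Gr\"onwall alternative you sketch, $\frac{d}{dt}\mathcal{E}+c(\dots)\leq C\mathcal{E}$, would only yield $\mathcal{E}(t)\leq e^{Ct}\mathcal{E}(0)$ and therefore cannot produce the time-uniform bound $A_0(t)\leq CA_0(0)$ with $C$ independent of $T$ that the lemma asserts; the closed inequality must have no positive zero-order term on the right, which is exactly what the direct absorption achieves. With that step repaired, the rest of your argument (including the $O(\ep)$ bounds on $\nabla\trho$, $\nabla\tfT$ from \eqref{chi}, the skew-symmetry of the $\tu\cdot\nabla$ transport via $\div\tu=0$, and the $\ep^2$-weighted viscous heating) matches the paper's estimates of $I_1$ and $I_3$--$I_8$.
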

\begin{proof}
We introduce the relative entropy defined by 
\begin{align}
	\eta:=&\frac{\ep^2\rho}{\tfT}|\psi|^2
	+\frac{\rho}{\gamma-1}\big(\frac{\fT}{\tfT}-\ln(\frac{\fT}{\tfT})-1\big)
	+\rho\big(\frac{\tau}{\ttau}-\ln(\frac{\tau}{\ttau})-1\big)   \notag\\
	=&\frac{\ep^2\rho}{\tfT}|\psi|^2
	+\frac{\rho}{\gamma-1}\big(\frac{\theta}{\tfT}-\ln(1+\frac{\theta}{\tfT})\big)
	+\rho\big(-\frac{\varphi}{\rho}-\ln(1-\frac{\varphi}{\rho})\big),  \label{equiv1}
\end{align}
where
\begin{align}
	\tau:=\frac{1}{\rho},\quad \ttau:=\frac{1}{\trho}. \notag
\end{align}
For the function
\begin{align}
	f(z)=z-\ln(1+z),\quad\quad z\in(-1,\infty), \notag
\end{align}
it holds that 
\begin{equation}
	\begin{aligned}
		&f(0)=0,\quad\quad f^\prime(0)=0,\\
		&f^{\prime\prime}(z)=\frac{1}{(1+z)^2}>0,\quad\quad \forall z\in(-1,\infty), \notag
	\end{aligned}
\end{equation}
which yields that there exists a small constant $\delta_0>0$ such that 
\begin{align}
	f(z)\geq z^2, \quad\quad\forall z\in(-\delta_0,\,\delta_0). \notag
\end{align}
Therefore, we deduce form \eqref{infW} and \eqref{equiv1} that
\begin{align}
	\tC^{-1}\|(\varphi,\ep\psi,\theta)(t)\|^2_{L^2}
	\leq \int \eta(t)dx
	\leq \tC\|(\varphi,\ep\psi,\theta)(t)\|^2_{L^2},\quad \forall t\in[0,T], \label{equiv2}
\end{align}	
provided that $\ep$ is small enough.

Differentiating $\eta$ with respect to $t$ gives
\begin{align}
	\pt_t\eta=\left(\ln(\frac{\rho}{\trho})+\frac{\ep^2}{2}\frac{|\psi|^2}{\tfT}
	+\frac{1}{\gamma-1}(\frac{\fT}{\tfT}-\ln(\frac{\fT}{\tfT})-1)\right)\pt_t\varphi
	+\ep^2\frac{\rho}{\tfT}\psi\cdot\pt_t\psi+\rho(\frac{1}{\tfT}-\frac{1}{\fT})\pt_t\theta.   \label{pteta}
\end{align}
It follows from \eqref{eq1} and integration by parts that
\begin{align}
	&\int \left(\ln(\frac{\rho}{\trho})+\frac{\ep^2}{2}\frac{|\psi|^2}{\tfT}
	+\frac{1}{\gamma-1}(\frac{\fT}{\tfT}-\ln(\frac{\fT}{\tfT})-1)\right)\pt_t\varphi dx\notag\\
	=&\int -\left(\ln\rho+\ln\tfT+\frac{\ep^2}{2}\frac{|\psi|^2}{\tfT}
	+\frac{1}{\gamma-1}(\frac{\fT}{\tfT}-\ln\fT+\ln\tfT)\right)\div(\rho u) dx\notag\\
	=&\int u\cdot\nabla\rho dx+\int\rho u\cdot\nabla\left(\frac{\ep^2|\psi|^2}{2\tfT}+\frac{1}{\gamma-1}(\frac{\fT}{\tfT}-\ln\fT)\right)dx
	+\frac{\gamma}{\gamma-1}\int\frac{\rho}{\tfT}\psi\cdot\nabla\tfT dx. \label{eta1}
\end{align}
Similarly, \eqref{eq2}--\eqref{eq3}, together with integration by parts lead to
\begin{align}
	\int \ep^2\frac{\rho}{\tfT}\psi\cdot\pt_t\psi dx
	=&-\ep^2\int\big( \frac{\rho}{\tfT} u\cdot\nabla(\frac{|\psi|^2}{2}) +\frac{\rho}{\tfT}(\psi\cdot\nabla\tu)\cdot\psi\big)dx
	+\int\rho\fT(\frac{\div\psi}{\tfT}-\frac{\psi\cdot\nabla\tfT}{\tfT^2}) dx\notag\\
	&+\ep^2\int\big(\frac{\mu}{\tfT}\Delta\psi\cdot\psi+\frac{(\mu+\mup)}{\tfT}\nabla\div\psi\cdot\psi\big) dx, \label{eta2}
\end{align}
and
\begin{align}
	\int \rho(\frac{1}{\tfT}-\frac{1}{\fT})\pt_t\theta dx
	=&\frac{1}{\gamma-1}\int\rho u\cdot\nabla\big(\ln\fT-\frac{\fT}{\tfT}\big)dx
	+\frac{1}{\gamma-1}\int \frac{\rho\fT}{\tfT^2}u\cdot\nabla\tfT dx\notag\\
	+&\int\big(\rho\div\psi-\frac{\rho\fT}{\tfT}\div\psi\big)dx
 +\int\kappa(\frac{1}{\tfT}-\frac{1}{\fT})\Delta\theta dx  \notag\\
	&+\ep^2\int(\frac{1}{\tfT}-\frac{1}{\fT}) 
	\big(2\mu|\Dpsi|^2+4\mu\Dtu:\Dpsi+\mup(\div\psi)^2\big)dx.\label{eta3}
\end{align}

Substituting \eqref{eq1}--\eqref{eq3} into \eqref{pteta}, and integrating the resulting equation over $\Omega$, we obtain, by using \eqref{eta1}--\eqref{eta3} and some direct algebraic manipulations, that
\begin{align}
	\frac{d}{dt}\int\eta dx
    =&-\frac{\gamma}{\gamma-1}\int\frac{\rho\theta \psi\cdot\nabla\tfT}{\tfT^2} dx  
    		-\ep^2\int  \big(\frac{1}{2}\frac{\rho|\psi|^2\psi\cdot\nabla\tfT}{\tfT^2}
    	+\frac{\rho}{\tfT}(\psi\cdot\nabla\tu)\cdot\psi \big)dx\notag\\
    &-\ep^2\int\big(\frac{\mu}{\fT}|\nabla\psi|^2+\frac{\mu+\mup}{\fT}(\div\psi)^2\big) dx
    -\ep^2\int \frac{\mu(\psi\cdot\nabla\psi)\cdot\nabla\tfT}{\tfT^2}dx\notag\\
     &-\ep^2\int\frac{\mup(\nabla\tfT\cdot\psi)\div\psi}{\tfT^2}dx
    -\int\frac{\kappa}{\fT^2}|\nabla\theta|^2dx
    +\int \frac{\kappa\theta(\tfT+\fT)\nabla\tfT\cdot\nabla\theta}{\tfT^2\fT^2}dx\notag\\
    &+\int \frac{4\mu\ep^2\theta}{\tfT\fT}\Dtu:\Dpsi dx  \notag\\
    :=& \sum_{i=1}^{8}I_i,    \label{I0}
\end{align}
where we have used 
\begin{align}
	2\int |\Dpsi|^2dx=\int\big(|\nabla\psi|^2+(\div\psi)^2\big)dx. \notag
\end{align}
It follows from \eqref{ReMAnew}, \eqref{infW} and \Cref{Poin} that
\begin{align}
I_3+I_6\leq -\frac{c_1}{\Re}\ep^2\|\psi\|^2_{H^1}-\frac{c_2}{\Re}\|\theta\|^2_{H^1},   \notag
\end{align}
where $c_1$ and $c_2$ are positive constants independent of $\ep$, $\Re$ and $T$. Recalling the fact from \eqref{Couette1} and \eqref{chi} that
\begin{align}
	\|\nabla\tfT\|_{L^\infty}=\|(1-\chi)-\frac{\ep^2\Pr }{2C_p}(2x_2-1)\|_{L^\infty}\leq \tC\ep,  \notag
\end{align}
we use \Cref{Poin} and \eqref{ob0} to obtain that
\begin{align}
	I_1\leq \tC\|\nabla\tfT\|_{L^\infty}\|\psi\|_{H^1}\|\theta\|_{H^1}
	\leq \tC\|\nabla\tfT\|^2_{L^\infty}\|\psi\|^2_{H^1}+\tC\|\theta\|^2_{H^1}
	\leq \tC\ep^2\|\psi\|^2_{H^1}+\tC\|\theta\|^2_{H^1}.  \notag
\end{align}
Similarly, we have
\begin{align}
I_2 \leq \tC\|\psi\|^2_{H^1}, 
\quad 	I_4+I_5\leq  \frac{\tC}{\Re}\ep^3\|\psi\|^2_{H^1}, \quad I_7\leq \frac{\tC\ep}{\Re}\|\theta\|^2_{H^1},  \notag
\end{align}
and
\begin{align}
	I_8\leq \frac{\tC}{\Re}\ep^2\|\theta \|_{L^2}\|\nabla\psi\|_{L^2}
	\leq \frac{\tC}{\Re}(\ep^3\|\psi\|^2_{H^1}+\ep\|\theta\|^2_{H^1}), \notag
\end{align}
where we have used the Young inequality. 

Substituting the estimates of $I_1$ through $I_8$ into \eqref{I0}, and integrating the resulting inequality over the time interval $[0,t]$, we get
\begin{align}
	\int \eta(t)dx+C^{-1}\int_{0}^{t}\big(\ep^2\|\psi(s)\|^2_{H^1}+\|\theta(s)\|^2_{H^1}\big)ds
	\leq \int\eta(0)dx,  	\quad\forall t\in[0,T], \label{equiv3}
\end{align}
provided that both $\Re$ and $\ep$ are small enough.

Finally, \eqref{equiv2} and \eqref{equiv3} imply \eqref{eneA0}. The proof is completed.
\end{proof}

\subsection{Estimates for the first order derivatives.}
This subsection is devoted to establishing the \textit{a priori} $H^1$-type estimates. 

We first derive the estimates for $\sup_{t\in [0,T]}A_1(t)$ and $\int_{0}^{T}\|\pt_t \varphi(t)\|^2_{L^2}dt$.
\begin{lemma} \label{lemma1}
Suppose that \eqref{chi} holds, and suppose that $\Re<\Re_0$ and $\ep<\vep_1$ as in \Cref{basic}. Then, the following estimates hold:
	\begin{align}
		&\int_{0}^{t}\|\pt_t \varphi(s)\|^2_{L^2}ds
		\leq C\int_{0}^{t}\big(\|\psi(s)\|^2_{H^1}
		+\|\pt_{1}\varphi(s)\|^2_{L^2}\big)ds,
		\quad\forall t\in[0,T],    \label{ene1}	
\end{align}
	and
     \begin{align}
    	&A_1(t)\leq CA_1(0)+C\ep^{-2}A_0(0)+C\ep^{-2}\int_{0}^{t}\|\pt_1\varphi\|^2_{L^2}ds,
    \quad\forall t\in[0,T].\label{ene2} 
    \end{align}
\end{lemma}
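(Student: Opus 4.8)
The plan is to derive the two estimates by standard energy methods applied to the perturbation system \eqref{eq1}--\eqref{eq3}, exploiting the structure $\div\tu = 0$ and the Poincaré inequalities of \Cref{Poin} to close everything in terms of the quantities appearing in $A_0$, $A_1$ plus the single troublesome term $\int_0^t\|\pt_1\varphi\|_{L^2}^2\,ds$. For \eqref{ene1} I would read off $\pt_t\varphi$ directly from the continuity equation \eqref{eq1}: $\pt_t\varphi = -u\cdot\nabla\varphi - \rho\div\psi - \psi\cdot\nabla\trho$. Taking $L^2$ norms and using $u = \tu + \psi$ with $\tu = (x_2,0)^\top$, the convective term splits as $\tu\cdot\nabla\varphi = x_2\pt_1\varphi$ (which is controlled by $\|\pt_1\varphi\|_{L^2}$, the source of that term) plus $\psi\cdot\nabla\varphi$, which by \Cref{GN lemma}, \Cref{elementary} and the smallness $\Nhat\le 1$ is absorbed as $\tC\Nhat^{1/2}\|\nabla\varphi\|_{L^2}\le\tC\|\psi\|_{H^1}$-type contributions after using the $H^1$-bound on $\varphi$; the terms $\rho\div\psi$ and $\psi\cdot\nabla\trho$ are bounded by $C\|\psi\|_{H^1}$ using $\|\trho-1\|_{L^\infty\cap H^4}\le\tC\ep$ from \eqref{ob0}. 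Integrating in time gives \eqref{ene1}.

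For \eqref{ene2}, the main work is an $H^1$ energy estimate for $\psi$ and a corresponding one for $\theta$. I would multiply \eqref{eq2} by $\pt_t\psi$ and integrate over $\Omega$; the viscous terms produce $\frac{d}{dt}$ of $\frac{\mu}{2}\|\nabla\psi\|_{L^2}^2 + \frac{\mu+\mup}{2}\|\div\psi\|_{L^2}^2$ (using $\psi|_{\pt\Omega}=0$), and the inertial term gives $\int\rho|\pt_t\psi|^2\,dx\ge\tfrac12\|\pt_t\psi\|_{L^2}^2$ by \eqref{infW}. The pressure term $\ep^{-2}\int\nabla P(\rho,\fT)\cdot\pt_t\psi\,dx$ is the delicate one: writing $P(\rho,\fT) = P(\trho,\tfT) + (\text{linear in }\varphi,\theta) + (\text{quadratic})$ and noting $\nabla P(\trho,\tfT)$ enters the steady equation, one integrates by parts in $t$ after moving a time derivative onto $P$, producing a $\frac{d}{dt}$ term of size $\ep^{-2}\|(\varphi,\theta)\|_{L^2}\|\div\psi\|_{L^2}\lesssim \ep^{-2}A_0$ plus $\ep^{-2}\int\pt_t P\,\div\psi$, and $\pt_t P$ is handled using \eqref{eq1},\eqref{eq3} — crucially $\pt_t\varphi$ here again generates the $x_2\pt_1\varphi$ term, hence the $C\ep^{-2}\int_0^t\|\pt_1\varphi\|_{L^2}^2\,ds$ in \eqref{ene2}, while the remaining pieces are absorbed into $\ep^2\|\nabla\psi\|_{H^1}$ or the dissipation. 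The convective and $\psi\cdot\nabla\tu$ terms are controlled by $\tC\|\psi\|_{H^1}^2 + \tC\|\pt_t\psi\|_{L^2}^2$ with small prefactor via \Cref{elementary}, \Cref{Poin}; the $\ep^2$-order stress terms on the right of \eqref{eq3}, after multiplying \eqref{eq3} by $\ep^{-2}\pt_t\theta$, contribute $\ep^2$-bounded quantities. Summing the $\psi$- and $\ep^{-2}$-weighted $\theta$-estimates, integrating in $t$, and absorbing the small terms using $\Re<\Re_0$, $\ep<\vep_1$, $\Nhat$ small yields \eqref{ene2}.

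The main obstacle is the pressure term and, more generally, bookkeeping the powers of $\ep$: the factor $\ep^{-2}$ in front of $\nabla P$ means any estimate that is merely $O(1)$ in $\varphi,\theta$ is fatal, so one must use the $\ep$-weighted norms in $N(t)$ throughout and carefully integrate by parts in time to trade $\ep^{-2}\nabla P\cdot\pt_t\psi$ for a $\frac{d}{dt}$ term of acceptable size plus controllable remainders. The second, related subtlety is that the horizontal convection $x_2\pt_1\varphi$ inside $\pt_t\varphi$ cannot be absorbed by dissipation (there is no dissipation for $\varphi$), which is exactly why both conclusions carry the term $\int_0^t\|\pt_1\varphi\|_{L^2}^2\,ds$ — this quantity will later be estimated separately (via the Stokes estimates of \Cref{Stokes lemma} and the zero-average condition $\eqref{compatibilitycon}_1$) in the subsequent lemmas feeding into \Cref{priori}.
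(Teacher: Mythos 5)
Your proposal follows essentially the same route as the paper: \eqref{ene1} is read off directly from the continuity equation using $\tu^2=0$ (so that $\tu\cdot\nabla\varphi=x_2\pt_{1}\varphi$), and \eqref{ene2} is obtained by testing \eqref{eq2} with $\pt_t\psi$ and \eqref{eq3} with $\pt_t\theta$, moving the singular pressure term onto $\pt_t\div\psi$ and integrating by parts in time exactly as in \eqref{pt4}, then adding the $\ep^{-2}$-weighted temperature estimate. The only slip is the bound for the nonlinear convection $\psi\cdot\nabla\varphi$ in \eqref{ene1}: it should be estimated as $\|\nabla\varphi\|_{L^\infty}\|\psi\|_{L^2}\leq C\|\psi\|_{H^1}$ via \eqref{infW}, not as $\tC\Nhat^{1/2}\|\nabla\varphi\|_{L^2}$, since $\|\nabla\varphi\|_{L^2}$ (in particular $\|\pt_{2}\varphi\|_{L^2}$) is not controlled by the right-hand side of \eqref{ene1}.
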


\begin{proof}
	It follows from \eqref{eq1}, \eqref{infW} and \Cref{Poin} that
	\begin{align}
   	\|\pt_t\varphi\|_{L^2}\leq& \|\tu\cdot\nabla\varphi\|_{L^2}
   	+(\|\nabla\varphi\|_{L^\infty}+\|\nabla\trho\|_{L^\infty})\|\psi\|_{L^2}+\|\rho\|_{L^\infty}\|\nabla\psi\|_{L^2}\notag\\
    \leq&\|\pt_{1}\varphi\|_{L^2}+ C\|\psi\|_{H^1},    \label{pt1}
	\end{align}
    where the fact that $\tu^2=0$ has been used. Integrating \eqref{pt1} over the time interval $[0,t]$ gives \eqref{ene1}.
	
	Next, multiplying \eqref{eq3} by $\pt_t \theta$, we obtain, by using the method of integration by parts, that
	\begin{align}
	&\frac{d}{dt}(\frac{\kappa}{2}\|\nabla\theta\|^2_{L^2})+\int\frac{\rho}{\gamma-1} (\pt_t\theta)^2dx\notag\\
	\leq&\delta\|\pt_t\theta\|^2_{L^2}
	+C(1+\frac{1}{\delta})\big(\|\psi\|^2_{H^1}+\|\theta\|^2_{H^1}
	+\ep^2(1+\|\nabla\psi\|^2_{L^\infty})\|\nabla\psi\|^2_{L^2}\big)\notag\\
	\leq &\delta\|\pt_t\theta\|^2_{L^2}+C(1+\frac{1}{\delta})\big(\|\psi\|^2_{H^1}+\|\theta\|^2_{H^1}\big),
	\quad\quad \forall\,\delta>0, \label{pt2}
	\end{align}
    where we have used the Young inequality in the first inequality , and we have used \eqref{infW} in the second inequality. Integrating \eqref{pt2} over the time interval $[0,t]$ and choosing $\delta$ suitably small, we get
    \begin{align}
    &\|\nabla \theta(t)\|^2_{L^2}+\int_{0}^{t}\|\pt_t \theta(s)\|^2_{L^2}ds \notag\\
    \leq& C\|\nabla \theta_0\|^2_{L^2}
    +C\int_{0}^{t}\big(\|\psi(s)\|^2_{H^1}+\|\theta(s)\|^2_{H^1}\big)ds,\quad\forall t\in[0,T].   \label{tempene}
   	\end{align}
    Similarly, multiplying $\pt_t \psi$ on \eqref{eq2}, we obtain, by using the method of integration by parts, that
    \begin{align}
    	&\frac{d}{dt}\big(\frac{\mu}{2}\|\nabla\psi\|^2_{L^2}+\frac{\mu+\mup}{2}\|\div\psi\|^2_{L^2}\big)
    	+\int\rho (\pt_t\psi)^2dx\notag\\
    	\leq & \delta\|\pt_t\psi\|^2_{L^2}+C(1+\frac{1}{\delta})\big(\|\psi\|^2_{H^1}+\ep^{-2}\int\nabla P(\rho,\theta)\pt_t\psi dx\big), \quad \forall\,\delta>0.     \label{pt3}
    \end{align}
    where we have used \eqref{infW} and the Young inequality.
   Note that
      \begin{align}
      	 &\ep^{-2}\int\nabla P(\rho,\theta)\pt_t\psi dx\notag\\
      	 =&\ep^{-2}\int\nabla \big(P(\rho,\theta)-P(\trho,\tthe)\big)\pt_t\psi dx
      	 =-\ep^{-2}\int (\rho\theta-\trho\tthe)\pt_t\div\psi dx\notag\\
       =&-\ep^{-2}\pt_t\big(\int(\rho\theta-\trho\tthe)\div\psi dx\big)
       +\ep^{-2}\int\pt_t(\rho\theta)\div\psi dx\notag\\
       =&-\ep^{-2}\pt_t\big(\int(\theta\varphi+\trho\theta)\div\psi dx\big)
       +\ep^{-2}\int(\varphi\pt_t\theta+\theta\pt_t\varphi+\trho\pt_t\theta)\div\psi dx\notag\\
       \leq&-\ep^{-2}\pt_t\big(\int(\theta\varphi+\trho\theta)\div\psi dx\big)
       +C\ep^{-2}\big(\|\pt_t\varphi\|^2_{L^2}
       +\|\pt_t\theta\|^2_{L^2}+\|\psi\|^2_{H^1}\big).      \label{pt4}
      \end{align}
   The Young inequality gives 
   \begin{align}
   	\ep^{-2}\int(\theta\varphi+\trho\theta)\div\psi dx
   	\leq \delta\|\nabla\psi\|^2_{L^2}
   	+C(1+\frac{1}{\delta})\ep^{-4}\big(\|\varphi\|^2_{L^2}+\|\theta\|^2_{L^2}\big),
   	\quad\forall \delta>0.         \label{pt5}
   \end{align}
 and
\begin{align}
     \ep^{-2}\int(\theta_0\varphi_0+\trho\theta_0)\div\psi_0 dx
	\leq C\|\nabla\psi_0\|^2_{L^2}
	+C\ep^{-4}\big(\|\varphi_0\|^2_{L^2}+\|\theta_0\|^2_{L^2}\big),
	\,\,\,\,\,\forall \delta>0.         \label{pt6}
\end{align}
  Therefore, based on \eqref{pt4}--\eqref{pt6}, integrating \eqref{pt3} over the time interval $[0,t]$, and choosing $\delta$ small enough, we have
  \begin{align}
  	&\|\nabla \psi(t)\|^2_{L^2}+\int_{0}^{t}\|\pt_t \psi(s)\|^2_{L^2}ds \notag\\
  	\leq &C\ep^{-4}\big(\|(\varphi,\theta)(t)\|^2_{L^2}
  	+\|(\varphi_0,\theta_0)\|^2_{L^2}\big)+C\|\nabla \psi_0\|^2_{L^2}\notag\\
  	&+C\ep^{-2}\int_{0}^{t}\big(\|\pt_t\varphi\|^2_{L^2}
  	+\|\pt_t\theta\|^2_{L^2}+\|\psi\|^2_{H^1}\big)ds,
  	\quad\forall t\in[0,T].\label{tempene3} 
  \end{align}
Finally, adding \eqref{tempene} multiplied by $2C\ep^{-2}$ to \eqref{tempene3} derives
  \begin{align}
	A_1(t)\leq& CA_1(0)+C\ep^{-2}A_0(t)+C\ep^{-2}\int_{0}^{t}\|\pt_t\varphi\|^2_{L^2}ds\notag\\
	\leq& CA_1(0)+C\ep^{-2}A_0(0)+C\ep^{-2}\int_{0}^{t}\|\pt_1\varphi\|^2_{L^2}ds,
	\quad\forall t\in[0,T],
\end{align}
where \eqref{eneA0} and \eqref{ene1} have been used in the second inequality.

The proof is completed.
\end{proof}
    
Next, the equations \eqref{eq1} and \eqref{eq2} can be rewritten as
 \begin{align}
 	&\pt_t\varphi+u\cdot\nabla\varphi+\div\psi=f_1,   \label{eqf1}
 	 \end{align}
  and
   \begin{align}
 	&\rho\big(\pt_t\psi+u\cdot\nabla\psi+\psi\cdot\nabla\tu\big)
 	+\ep^{-2}\big(\nabla\varphi+\nabla\theta\big)
 	-\mu\Delta\psi-(\mu+\mup)\nabla\div\psi=\ep^{-2}\nabla f_2,\label{eqf2}
 \end{align}
 where 
 \begin{align}
 	f_1=&-(\rho-1)\div\psi-\psi\cdot\nabla\trho, \quad\quad
 	f_2=-\big((\trho-1)\theta+(\tfT-1)\varphi+\varphi\theta\big). \notag 
 \end{align}
We derive the estimates for $\sup_{t\in [0,T]}\ep^{-2}\|\nabla\varphi(t)\|^2_{L^2}$
and $\int_{0}^{T}\|\nabla\div \psi(t)\|^2_{L^2}dt$ as follows.  

\begin{lemma}  \label{lemma2}
Suppose that \eqref{chi} holds, and suppose that $\Re<\Re_0$ and $\ep<\vep_1$ as in \Cref{basic}. Then, the following estimate holds:
	\begin{align}
		 &\ep^{-2}\|\nabla\varphi(t)\|^2_{L^2}+\|\pt_{1}\psi(t)\|^2_{L^2}+\int_{0}^{t}\|\nabla\div \psi(s)\|^2_{L^2}ds\notag\\
		\leq& C\big(\ep^{-2}\|\varphi_0\|^2_{H^1}+\|\pt_{1}\psi_0\|^2_{L^2}
		+ A_1(0)+\ep^{-2}A_0(0)
	+\ep^{-2}\int_{0}^{t}\|\nabla\varphi(s)\|^2_{L^2}ds\big),\quad \forall t\in[0,T]. \label{ene3}
	\end{align}
	
\end{lemma}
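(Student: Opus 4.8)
The plan is to prove \eqref{ene3} by combining two energy estimates for the momentum equation \eqref{eqf2}. The first is a \emph{tangential} estimate in the periodic variable, producing $\|\pt_{1}\psi(t)\|_{L^2}^2$ and the dissipation $\int_0^t\|\pt_{1}\div\psi\|_{L^2}^2$; the second is a \emph{Matsumura--Nishida type} estimate obtained by testing \eqref{eqf2} against $\nabla\varphi$, producing $\ep^{-2}\|\nabla\varphi(t)\|_{L^2}^2$. In both, the key device is to use the continuity equation \eqref{eqf1} to rewrite the pressure pairings of the form $\ep^{-2}\int\nabla\varphi\cdot(\cdots)\div\psi$ as time derivatives of $\|\pt_{1}\varphi\|_{L^2}^2$ or $\|\nabla\varphi\|_{L^2}^2$; this is exactly what keeps the factor $\ep^{-2}$, rather than $\ep^{-4}$, in front of $\int_0^t\|\nabla\varphi\|_{L^2}^2$ on the right-hand side.

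For the tangential estimate: since $\trho,\tu,\tfT$ depend only on $x_2$ and $\pt_{1}\psi|_{\pt\Omega}=0$, applying $\pt_{1}$ to \eqref{eqf2} and testing with $\pt_{1}\psi$ produces a differential identity with principal part
\[
\frac{1}{2}\frac{d}{dt}\int\rho|\pt_{1}\psi|^2\,dx+\mu\|\nabla\pt_{1}\psi\|_{L^2}^2+(\mu+\mup)\|\pt_{1}\div\psi\|_{L^2}^2+\ep^{-2}\int(\pt_{1}\nabla\varphi+\pt_{1}\nabla\theta)\cdot\pt_{1}\psi\,dx,
\]
the remaining terms being convective or lower order. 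The $\varphi$-pressure term is $-\ep^{-2}\int\pt_{1}\varphi\,\pt_{1}\div\psi\,dx$ after integration by parts; substituting $\pt_{1}\div\psi=-\pt_{t}\pt_{1}\varphi-\pt_{1}(u\cdot\nabla\varphi)+\pt_{1}f_{1}$ from $\pt_{1}$ of \eqref{eqf1} extracts $\tfrac{\ep^{-2}}{2}\tfrac{d}{dt}\|\pt_{1}\varphi\|_{L^2}^2$ into the energy, while the remainders are quadratically small by \eqref{infW} or absorbed into $(\mu+\mup)\|\pt_{1}\div\psi\|_{L^2}^2$; the $\theta$-pressure term is $\le\delta\|\pt_{1}\div\psi\|_{L^2}^2+C_{\delta}\ep^{-4}\|\nabla\theta\|_{L^2}^2$, and $\ep^{-4}\int_0^t\|\nabla\theta\|_{L^2}^2\le C\ep^{-2}A_0(0)$ by \eqref{eneA0}. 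The convective terms use $\div\tu=0$: the a priori dangerous piece $\int\rho\,x_2\,\pt_{1}^2\psi\cdot\pt_{1}\psi\,dx$ integrates by parts in the periodic variable $x_1$ to $-\tfrac{1}{2}\int x_2\,\pt_{1}\varphi\,|\pt_{1}\psi|^2\,dx$, an $\Nhat$-small remainder, and the term with $\psi\cdot\nabla\tu=(\psi^2,0)^{\top}$ is bounded by $C(\|\psi\|_{H^1}^2+\|\pt_{1}\psi\|_{L^2}^2)$. Integrating in $t$, using $\rho>\tfrac{1}{2}$, bounding the commutator $\int\pt_{1}\rho\,\pt_{t}\psi\cdot\pt_{1}\psi\,dx$ by terms controlled via \eqref{ene2} and $\int_0^t\|\nabla\psi\|_{L^2}^2$ via \eqref{eneA0}, one obtains $\|\pt_{1}\psi(t)\|_{L^2}^2+\ep^{-2}\|\pt_{1}\varphi(t)\|_{L^2}^2+\int_0^t\big(\|\nabla\pt_{1}\psi\|_{L^2}^2+\|\pt_{1}\div\psi\|_{L^2}^2\big)$ bounded by the right side of \eqref{ene3}.

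For the $\nabla\varphi$ estimate, take the $L^2$-inner product of \eqref{eqf2} with $\nabla\varphi$: the pressure term gives $\ep^{-2}\|\nabla\varphi\|_{L^2}^2$; write $\int\rho\pt_{t}\psi\cdot\nabla\varphi\,dx=\tfrac{d}{dt}\int\rho\psi\cdot\nabla\varphi\,dx-\int\pt_{t}\rho\,\psi\cdot\nabla\varphi\,dx+\int\div(\rho\psi)\,\pt_{t}\varphi\,dx$ (using $\psi|_{\pt\Omega}=0$) and eliminate $\pt_{t}\varphi$ by \eqref{eqf1}; for the two viscous terms use $\Delta\psi^i=\pt_i\div\psi+(\nabla^{\perp}\omega)^i$ with $\omega=\pt_{1}\psi^2-\pt_{2}\psi^1$ together with integration by parts, so that $-\mu\int\Delta\psi\cdot\nabla\varphi\,dx-(\mu+\mup)\int\nabla\div\psi\cdot\nabla\varphi\,dx$ reduces, modulo boundary integrals over $\{x_2=0,1\}$ and lower-order terms, to $-(2\mu+\mup)\int\nabla\div\psi\cdot\nabla\varphi\,dx$, whereupon substituting $\div\psi=f_{1}-\pt_{t}\varphi-u\cdot\nabla\varphi$ from \eqref{eqf1} extracts $\tfrac{2\mu+\mup}{2}\tfrac{d}{dt}\|\nabla\varphi\|_{L^2}^2$. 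The endpoint-in-time term $\int\rho\psi(t)\cdot\nabla\varphi(t)\,dx\le\delta\ep^{-2}\|\nabla\varphi(t)\|_{L^2}^2+C_{\delta}\|\psi(t)\|_{L^2}^2$ is absorbed, and all convective and $\theta$-contributions reduce to $\|\psi\|_{H^1}^2$, $\ep^{-2}\|\nabla\theta\|_{L^2}^2$ and $\ep^{-2}\|\nabla\varphi\|_{L^2}^2$ by Young's inequality and the Poincar\'e inequalities \eqref{Poincare}, \eqref{Poincarerho} (recall $\int\varphi\,dx=0$). Weighting this differential inequality by $\ep^{-2}$ before integrating in $t$ turns the density energy into $\ep^{-2}\|\nabla\varphi(t)\|_{L^2}^2$, at the price of an extra $\ep^{-2}$ on the right, which is harmless since $\ep^{-2}\int_0^t\|\psi\|_{H^1}^2$ and $\ep^{-4}\int_0^t\|\nabla\theta\|_{L^2}^2$ are $\le C\ep^{-2}A_0(0)$ by \eqref{eneA0}. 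Adding the two estimates, recovering the missing $\pt_{2}$-part of $\nabla\div\psi$ from the $x_2$-component of \eqref{eqf2} and the already-controlled $\int_0^t\|\nabla\pt_{1}\psi\|_{L^2}^2$, replacing $\|\pt_{t}\varphi\|_{L^2}^2$ by $\|\psi\|_{H^1}^2+\|\pt_{1}\varphi\|_{L^2}^2$ via \eqref{ene1} (and using $\|\pt_{1}\varphi\|_{L^2}^2\le\|\nabla\varphi\|_{L^2}^2$), and taking $\Re,\ep$ small so that the $\tC\Re$- and $\tC\sqrt{\Nhat}$-coefficients are swallowed by the dissipation yields \eqref{ene3}. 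I expect the hardest part to be the boundary integrals over $\{x_2=0,1\}$ generated by the viscous term $-\mu\int\Delta\psi\cdot\nabla\varphi\,dx$: the non-slip condition gives no boundary control on $\varphi$, so these must be tamed using the trace of \eqref{eqf1}, namely $\rho\,\div\psi|_{\pt\Omega}=-(\pt_{t}\varphi+u^{1}\pt_{1}\varphi)|_{\pt\Omega}$, which makes them tangential in character — precisely the obstruction highlighted in the introduction.
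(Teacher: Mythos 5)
Your tangential step is essentially the paper's: differentiate \eqref{eqf1}--\eqref{eqf2} in $x_1$, test with $\ep^{-2}\pt_{1}\varphi$ and $\pt_{1}\psi$, let the pressure pairing combine with the continuity equation to produce $\tfrac{d}{dt}\,\ep^{-2}\|\pt_{1}\varphi\|^2_{L^2}$, and control the convective terms using $\div\tu=0$ and periodicity; that half is sound. The gap is in your second step. Testing \eqref{eqf2} against $\nabla\varphi$ forces an integration by parts of the viscous term, and the decomposition $\Delta\psi=\nabla\div\psi-\nabla^{\perp}\omega$ with $\omega=\pt_{1}\psi^2-\pt_{2}\psi^1$ leaves the boundary integral $\mu\int_{\mathcal{S}^1}\big[\omega\,\pt_{1}\varphi\big]_{x_2=0}^{x_2=1}\,dx_1$, where $\omega|_{\pt\Omega}=-\pt_{2}\psi^1$ since $\psi$ vanishes on the walls. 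This pairs the normal derivative of the \emph{tangential} velocity component --- exactly what the non-slip condition fails to control --- with the trace of $\pt_{1}\varphi$, for which there is no boundary information whatsoever. Your proposed fix via the trace of \eqref{eqf1} only expresses $\div\psi|_{\pt\Omega}=\pt_{2}\psi^2|_{\pt\Omega}$ in terms of $\pt_{t}\varphi$ and $\pt_{1}\varphi$; it says nothing about $\pt_{2}\psi^1|_{\pt\Omega}$. Bounding the boundary integral by trace inequalities costs $\|\psi\|_{H^2}$ and $\|\nabla^2\varphi\|_{L^2}$, which at the $H^1$ stage are only available with an $\ep^{2}$ weight (through $A_4$), so the term cannot be closed.

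The paper's proof is built to avoid precisely this: it never tests the viscous term against $\nabla\varphi$. It differentiates the continuity equation in $x_2$ (equation \eqref{eqnormal1}) and rewrites the \emph{second component} of the momentum equation through the identity \eqref{normal1}, isolating $(2\mu+\mup)\pt^2_{2}\psi^2$ in \eqref{eqnormal2}. Multiplying \eqref{eqnormal1} by $(2\mu+\mup)\ep^{-2}H_1$ and \eqref{eqnormal2} by $\ep^{-2}\rho H_1$, with $H_1=\tfT\pt_{2}\varphi+\varphi\pt_{2}\tfT$ the principal part of $\pt_{2}P$, makes the $\pt^2_{2}\psi^2$ contributions cancel pointwise, so no integration by parts of the viscous term is performed and no boundary term is generated; simultaneously one gains the energy $\ep^{-2}\|\pt_{2}\varphi(t)\|^2_{L^2}$ and the dissipation $\ep^{-4}\int_0^t\|H_1\|^2_{L^2}$. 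The leftover cross terms $\mu\pt_{1}(\pt_{1}\psi^2-\pt_{2}\psi^1)+(\mu+\mup)\pt_{2}\pt_{1}\psi^1$ are controlled by $\|\nabla\pt_{1}\psi\|_{L^2}$ --- exactly the dissipation your tangential step supplies --- and $\pt_{2}\div\psi$ is then recovered from \eqref{eqnormal3}. You should replace the $\nabla\varphi$ test by this component-wise cancellation.
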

\begin{proof}
	Noting that $\pt_{1}\trho=0$ and $\pt_{1}\tu=0$, we first differentiating both \eqref{eq1} and \eqref{eq2} in $x_1$ to get
	\begin{align}
	\pt_t\pt_{1}\varphi+u\cdot\nabla\pt_{1}\varphi+\div(\pt_{1}\psi)
	=\pt_{1}f_1-\pt_{1}\psi\cdot\nabla\varphi, \label{pertur6}
	\end{align}
and	
	\begin{align}
	&\rho(\pt_t\pt_{1}\psi+u\cdot\nabla \pt_{1}\psi)
	+\ep^{-2}\nabla \big(\pt_{1}\varphi+\pt_{1}\theta)-\mu\Delta \pt_{1}\psi-(\mu+\mup)\nabla\pt_{1}\div \psi\notag\\
	=&\nabla\pt_1f_2+R_1+R_2,\label{pertur7}
	\end{align}
where
\begin{align}
\quad R_1=-\pt_{1}\varphi(\pt_t\psi+u\cdot\nabla\psi+\psi\cdot\nabla\tu),\quad R_2=-\rho\pt_{1}\psi\cdot\nabla(\tu+\psi). \notag
\end{align}
It follows from \eqref{infW} and \eqref{N} that
\begin{align}
	R_1=O(1)|\pt_1\varphi|(|\pt_t\psi|+|\nabla\psi|+|\psi|),
	\quad R_2=O(1)|\nabla\psi|(1+|\nabla\psi|).  \label{Landou1}
\end{align}
Throughout this paper, the Landau notation $O(1)$ is used to indicate a function whose absolute value remains uniformly bounded by a positive constant $C$ independent of $\ep$ and $T$.

Similarly, we have 
\begin{align}
	&\pt_{1}f_1=O(1)(|\trho-1|+|\varphi|)|\pt_{1}\div\psi|+O(1)|\pt_1\varphi||\div\psi|+O(1)|\nabla\trho||\pt_{1}\psi|,
	\label{Landou2}
\end{align}
and 
\begin{align}
		\pt_{1}f_2=O(1)(|\trho-1|+|\tfT-1|+|\varphi|+|\theta|)(|\pt_1\varphi|+|\pt_1\theta|).\label{Landou3}
\end{align}
    
    Then, based on \eqref{ob0} and \eqref{infW}, adding \eqref{pertur6} multiplied by $\ep^{-2}\pt_{1}\varphi$ to \eqref{pertur7} multiplied by $\pt_{1}\psi$, we obtain, by applying the method of integration by parts, and using \eqref{Landou1} and \eqref{Landou2}--\eqref{Landou3} that
    \begin{align}
    	&\frac{d}{dt}\big(\frac{1}{2}\int_{\Omega}\ep^{-2}|\pt_{1}\varphi|^2+\rho|\pt_{1}\psi|^2dx\big)
    	+\mu\|\nabla\pt_{1}\psi\|^2_{L^2}+(\mu+\mup)\|\pt_{1}\div\psi\|^2_{L^2}    \notag\\
    	\leq&  \frac{1}{2} \ep^{-2}\|\pt_{1}\varphi\|_{L^\infty}\|\div\psi\|_{L^2}\|\pt_{1}\varphi\|_{L^2}
    	+\ep^{-2}\|\pt_1f_1\|_{L^2}\|\pt_1\varphi\|_{L^2}
    	+\ep^{-2}\|\nabla\varphi\|_{L^\infty}\|\pt_1\psi\|_{L^2}\|\pt_1\varphi\|_{L^2} \notag\\
    	&+\ep^{-2}\big(\|\pt_1\theta\|_{L^2}+\|\pt_1f_2\|_{L^2}\big)\|\div\pt_1\psi\|_{L^2}
    	+\big(\|R_2\|_{L^2}+\|R_3\|_{L^2}\big)\|\pt_1\psi\|_{L^2}
        \notag\\
    	\leq& \frac{\mu}{4}\|\nabla\pt_{1}\psi\|^2_{L^2}+C(1+\frac{1}{\delta})\ep^{-2}\|\pt_1\varphi\|^2_{L^2}
    	+\delta\ep^{-2}\|\pt_1f_1\|^2_{L^2}	\notag\\
    	&+C\big(\ep^{-4}\|\pt_1\theta\|^2_{L^2}+\ep^{-4}\|\pt_1f_2\|^2_{L^2}
    	+\|R_2\|^2_{L^2}+\|R_3\|^2_{L^2}+\ep^{-2}\|\nabla\psi\|^2_{L^2}\big)
    	\notag\\
    	\leq&  (\frac{\mu}{4}+C\delta)\|\nabla\pt_{1}\psi\|^2_{L^2}
    	+C(1+\frac{1}{\delta})\ep^{-2}\|\pt_1\varphi\|^2_{L^2}\notag\\
    		&+C\big(\ep^{-4}\|\theta\|^2_{H^1}
    	+\|\pt_t\psi\|^2_{L^2}+\ep^{-2}\|\psi\|^2_{H^1}\big),    \quad \quad \forall \delta>0,   \label{div1}
    \end{align}
   where we have used \Cref{Poin} and the Young inequality.
   
Integrating \eqref{div1} over the time interval $[0,t]$ and choosing $\delta$ small enough such that $C\delta\leq \frac{\mu}{4}$, we have
 \begin{align}
	&\ep^{-2}\|\pt_{1}\varphi(t)\|^2_{L^2}+\|\pt_{1}\psi(t)\|^2_{L^2}
	+\int_{0}^{t}\big(\|\nabla\pt_{1}\psi(s)\|^2_{L^2}+\|\pt_{1}\div\psi(s)\|^2_{L^2}\big)ds
     \notag\\
	\leq& C\big(\ep^{-2}\|\pt_{1}\varphi_0\|^2_{L^2}+\|\pt_{1}\psi_0\|^2_{L^2}\big)
	+C\ep^{-2}\int_{0}^{t} \|\pt_{1}\varphi(s)\|^2_{L^2}ds
	\notag\\
	&+C\int_{0}^{t}\big(\ep^{-4}\|\theta(s)\|^2_{H^1}+\ep^{-2}\|\psi(s)\|^2_{H^1}
	+\|\pt_t\psi(s)\|^2_{L^2}\big)ds.    \label{div2}
\end{align}
   
   	Next, noting that $	\pt_{2}\tu\cdot\nabla\varphi=\pt_{2}\tu^1\pt_{1}\varphi=\pt_{1}\varphi$, we obtain by differentiating \eqref{eq1} in $x_2$ that
   \begin{align}
   	&\pt_t\pt_{2}\varphi+u\cdot\nabla\pt_{2}\varphi+\pt_{1}\varphi
   	+\rho\pt^2_{2}\psi^2 = R_3,   \label{eqnormal1}
   \end{align}
   where 
      \begin{align}
      R_3=&-\pt_2\psi\cdot\nabla\rho-\psi\cdot\nabla\pt_2\trho-\pt_2\rho\div\psi-\rho\pt_{2}\pt_{1}\psi^1\notag\\
      =&O(1)\big(|\nabla \psi|(|\nabla \trho|+|\nabla \varphi|)+|\psi||\nabla^2 \trho|+|\rho||\nabla\pt_{1}\psi|\big).   \notag
      \end{align}
We note the fact from direct calculation that
\begin{align}
       \mu\Delta \psi^2+(\mu+\mup)\pt_{2}\div \psi
       =(2\mu+\mup)\pt^2_{2}\psi^2+\mu\pt_{1}(\pt_{1}\psi^2-\pt_{2}\psi^1)
         +(\mu+\mup)\pt_{2}\pt_{1}\psi^1,\label{normal1}
\end{align}
and
\begin{align}
	\pt_{2} P(\rho,\fT)=\pt_{2}\big(P(\rho,\fT)-P(\trho,\tfT)\big)              
	=H_1+H_2,  \label{normal2}
\end{align}
where
\begin{align}
	H_1:=\tfT\pt_{2}\varphi+\varphi\pt_{2}\tfT,\quad
	H_2:=\theta\pt_{2}\varphi+\varphi\pt_{2}\theta+\theta\pt_{2}\trho+\trho\pt_{2}\theta.    \label{H1}
\end{align}
Thus, we derive from \eqref{eq2}, \eqref{normal1} and \eqref{normal2} that
   \begin{align}
       &-(2\mu+\mup)\pt^2_{2}\psi^2
       +\ep^{-2}H_1=R_4,\label{eqnormal2}
  \end{align}
where
 \begin{align}
	R_4=&\mu\pt_{1}(\pt_{1}\psi^2-\pt_{2}\psi^1)+(\mu+\mup)\pt_{2}\pt_{1}\psi^1
	-\rho\big(\pt_t\psi+u\cdot\nabla\psi+\psi\cdot\nabla\tu\big)-\ep^{-2}H_2   \notag\\
	=&O(1)(|\nabla\pt_{1}\psi|+|\pt_t\psi|+|\nabla\psi|+|\psi|+\ep^{-2}(|\theta|+|\nabla\theta|)). \notag
\end{align}

    Adding \eqref{eqnormal1} multiplied by $(2\mu+\mup)\ep^{-2}H_1$ to \eqref{eqnormal2} multiplied by $\ep^{-2}\rho H_1$, we obtain, by using the method of integration by parts, that
   \begin{align}
   	&(2\mu+\mup)\ep^{-2}\frac{d}{dt}\int\big(\frac{1}{2}\tthe|\pt_{2}\varphi|^2
   	+\varphi\pt_{2}\tthe\pt_{2}\varphi\big) dx
   +\int_{\Omega}\ep^{-4}\rho H_1^2dx  \notag\\
   \leq &(2\mu+\mup)\ep^{-2}\int \big(|\div(\tthe\psi)||\pt_{2}\varphi|^2
   +|\pt_{2}\tthe||\pt_{2}\varphi||\pt_{t}\varphi|+|H_1||\pt_{1}\varphi|\big)dx\notag\\
   &+C\ep^{-2}(\| R_3\|_{L^2}+\| R_4\|_{L^2})\| H_1\|_{L^2}
   \notag\\
   \leq & \frac{1}{2}\big(\inf_{(x,t)\in \Omega\times[0,T]}\rho(x,t)\big)
   \int\ep^{-4}H_1^2dx+C\big(\|\pt_t\psi\|^2_{L^2}+\|\nabla\psi\|^2_{L^2}
   +\ep^{-4}\|\nabla\theta\|^2_{L^2}\notag\\
   &+\|\nabla\pt_{1}\psi\|^2_{L^2}
   +\ep^{-2}\|\pt_{2}\varphi\|^2_{L^2}
   +\ep^{-2}\|\pt_t\varphi\|^2_{L^2}
   +\|\pt_{1}\varphi\|^2_{L^2}\big), \label{div3}
   \end{align}
   where we have used \Cref{Poin} and the Young inequality.
   
Integrating \eqref{div3} over the time interval $[0,t]$, we have
\begin{align}
	&\ep^{-2}\|\pt_{2}\varphi(t)\|^2_{L^2}
	+\int_{0}^{t}\ep^{-4}\|H_1(s)\|^2_{L^2}ds\notag\\
	\leq &C\ep^{-2}\big(\|\varphi_0\|^2_{H^1}+\|\varphi(t)\|^2_{L^2}\big)
	+C\int_{0}^{t}\big(\|\pt_t\psi(s)\|^2_{L^2}+\|\psi(s)\|^2_{H^1}
	+\ep^{-4}\|\theta(s)\|^2_{H^1}\notag\\
	&+\|\nabla\pt_{1}\psi(s)\|^2_{L^2}+\ep^{-2}\|\pt_{2}\varphi(s)\|^2_{L^2}
	+\ep^{-2}\|\pt_t\varphi(s)\|^2_{L^2}
	+\|\pt_{1}\varphi(s)\|^2_{L^2}\big)ds, \quad\forall t\in[0,T],  \label{div4}
\end{align}
where we have used \eqref{infW} and the fact from the Young inequality that
\begin{align}
	\ep^{-2}\int_{\Omega}\tthe|\pt_{2}\varphi|^2+\varphi\pt_{2}\tthe\pt_{2}\varphi dx
	\geq& \ep^{-2}\int_{\Omega}\frac{1}{2}\tthe|\pt_{2}\varphi|^2dx
	-C\ep^{-2}\|\frac{\pt_{2}\tthe}{\tthe}\|_{L^\infty}\|\varphi\|^2_{L^2}\notag\\
	\geq& C\ep^{-2}\|\pt_{2}\varphi\|^2_{L^2}-C\ep^{-2}\|\varphi\|^2_{L^2}.\notag
\end{align}
It follows from \eqref{eqnormal2} that
\begin{align}
	(2\mu+\mup)\pt_{2}\div\psi
	=\ep^{-2}H_1+O(1)\big(|\pt_t\psi|+|\nabla\psi|+|\psi|
	+\ep^{-2}(|\theta|+|\nabla\theta|)+|\nabla\pt_{1}\psi|\big).\label{eqnormal3}
\end{align}
This, together with \eqref{div4}, gives
\begin{align}
	&\ep^{-2}\|\pt_{2}\varphi(t)\|^2_{L^2}
	+\int_{0}^{t}\|\pt_{2}\div\psi(s)\|^2_{L^2}ds\notag\\
	\leq& \int_{0}^{t}\ep^{-4}\|H_1(s)\|^2_{L^2}ds
	+C\int_{0}^{t}\big(\|\pt_t\psi(s)\|^2_{L^2}+\|\nabla\psi(s)\|^2_{L^2}
	+\|\nabla\theta(s)\|^2_{L^2}+\|\nabla\pt_{1}\psi(s)\|^2_{L^2}\big)ds\notag\\
	\leq  &C\ep^{-2}\|\varphi_0\|^2_{H^1}+C_1\int_{0}^{t}\|\nabla\pt_{1}\psi(s)\|^2_{L^2}ds
	+C\int_{0}^{t}\big(\|\pt_t\psi(s)\|^2_{L^2}+\|\psi(s)\|^2_{H^1}
	+\ep^{-4}\|\theta(s)\|^2_{H^1}\notag\\
	&+\ep^{-2}\|\pt_{2}\varphi(s)\|^2_{L^2}
	+\ep^{-2}\|\pt_t\varphi(s)\|^2_{L^2}
	+\|\pt_{1}\varphi(s)\|^2_{L^2}\big)ds,
	\quad\quad\forall t\in[0,T], \label{div5}
\end{align}
where $C_1$ is a positive constant independent of $\ep$ and $T$.

Finally, multiplying \eqref{div2} by $2C_1$ and then adding the resulting inequality to \eqref{div5}, we have
\begin{align}
	&\ep^{-2}\|\nabla\varphi(t)\|^2_{L^2}+\|\pt_{1}\psi(t)\|^2_{L^2}
	+\int_{0}^{t}\big(\|\nabla\pt_{1}\psi(s)\|^2_{L^2}+\|\nabla\div \psi(s)\|^2_{L^2}\big)ds\notag\\
	\leq& C\big(\ep^{-2}\|\varphi_0\|^2_{H^1}+\|\pt_{1}\psi_0\|^2_{L^2}\big)
	+C\int_{0}^{t}\big(\ep^{-2}\|\pt_{1}\varphi(s)\|^2_{L^2}
	+\ep^{-4}\|\theta(s)\|^2_{H^1}+\ep^{-2}\|\psi(s)\|^2_{H^1}\notag\\
	&+\|\pt_t\psi(s)\|^2_{L^2}
	+\ep^{-2}\|\pt_{2}\varphi(s)\|^2_{L^2}
	+\ep^{-2}\|\pt_t\varphi(s)\|^2_{L^2}\big)ds,  	\quad\quad  \forall\, t\in[0,T], \notag
\end{align}
which, together with \eqref{ene1} and \eqref{ene2}, implies \eqref{ene3}.

The proof is completed.
\end{proof}

To derive the estimates for $\int_{0}^{T}\|\nabla^2\psi(t)\|^2_{L^2}dt$ and $\int_{0}^{T}\ep^{-2}\|\varphi(t)\|^2_{H^1}dt$, we rewrite \eqref{eq2} in the form of stationary nonhomogeneous Stokes equations:
\begin{equation}
	\left.\begin{cases}
		\div \psi=g, \quad\quad\quad\quad\quad\text{in}\,\,\Omega,\\
		-\mu\Delta \psi+\ep^{-2}\nabla P(\rho,\fT)=F,
		\quad\quad\quad\text{in}\,\,\Omega,\\
		\psi|_{x_2=0,1}=0,
	\end{cases}\label{Stokes2}
	\right.
\end{equation}
where 
\begin{align}
	g=\div\psi,\,\,\, F=-\rho\big(\pt_t\psi+u\cdot\nabla \psi+\psi\cdot\nabla\tu\big)+(\mu+\mup)\nabla\div\psi. \label{gF}
\end{align}
We have the following lemma.
\begin{lemma}  \label{lemma3}
	Suppose that \eqref{chi} holds, and suppose that $\Re<\Re_0$ and $\ep<\vep_1$ as in \Cref{basic}. Then, there exists a positive constant $\vep_2$ depending only on $\Re$, $\Pr$, $\frac{\mup}{\mu}$ and $\gamma$, such that if $\ep\leq \vep_2$, then
	\begin{align}
	\int_{0}^{t}\|\psi(s)\|^2_{H^2}ds+\ep^{-4}\int_{0}^{t}\|\varphi(s)\|^2_{H^1}ds
	\leq  C\big(A_1(0)+\ep^{-2}A_0(0)\big), \quad \forall t\in [0,T].
		\label{ene4}
	\end{align}
\end{lemma}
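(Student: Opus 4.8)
The plan is to exploit the reformulation \eqref{Stokes2} of the momentum equation \eqref{eq2} as a stationary Stokes system, apply the elliptic estimate \eqref{Stokes1} (with $k=1$) for the Stokes system \eqref{Stokes}, and then feed in the bounds already obtained in \Cref{basic,lemma1,lemma2}.

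\emph{Step 1 (effective pressure).} First I would identify the pressure in \eqref{Stokes2}. Since $\trho\tfT\equiv1$ by \eqref{Couette1}, one has $\ep^{-2}\nabla P(\rho,\fT)=\ep^{-2}\nabla\big(P(\rho,\fT)-P(\trho,\tfT)\big)=\ep^{-2}\nabla(\tfT\varphi+\trho\theta+\varphi\theta)$, so that $\psi$ solves \eqref{Stokes2} with pressure $p=\ep^{-2}(\tfT\varphi+\trho\theta+\varphi\theta)$ (up to an additive constant), while $g=\div\psi$ has zero mean by \eqref{eq4}. Hence \eqref{Stokes1} with $k=1$ gives, pointwise in $t$,
\[
\|\psi\|_{H^2}+\ep^{-2}\|\nabla(\tfT\varphi+\trho\theta+\varphi\theta)\|_{L^2}\leq C\big(\|F\|_{L^2}+\|g\|_{H^1}\big),
\]
with $F,g$ given by \eqref{gF} and $C$ allowed to depend on $\Re$. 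From the identity $\ep^{-2}\tfT\nabla\varphi=\ep^{-2}\nabla(\tfT\varphi+\trho\theta+\varphi\theta)-\ep^{-2}\big(\varphi\nabla\tfT+\nabla(\trho\theta)+\nabla(\varphi\theta)\big)$ and $\tfT>\tfrac{3}{4}$ from \eqref{ob0}, I would peel off $\ep^{-2}\|\nabla\varphi\|_{L^2}$: the term $\ep^{-2}\|\varphi\nabla\tfT\|_{L^2}\leq\tC\ep\cdot\ep^{-2}\|\nabla\varphi\|_{L^2}$ (using $\|\nabla\tfT\|_{L^\infty}\leq\tC\ep$ and \eqref{Poincarerho}) and the nonlinear terms $\ep^{-2}\|\theta\nabla\varphi\|_{L^2},\ep^{-2}\|\varphi\nabla\theta\|_{L^2}$ (bounded via the $L^\infty$ estimates in \eqref{infW}, each of which supplies a factor $\ep$, together with \eqref{Poincarerho}) will be absorbed into the left-hand side once $\ep\leq\vep_2$ is small, while $\ep^{-2}\|\nabla(\trho\theta)\|_{L^2}\leq C\ep^{-2}\|\theta\|_{H^1}$. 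This would yield the pointwise bound $\|\psi\|_{H^2}+\ep^{-2}\|\nabla\varphi\|_{L^2}\leq C(\|F\|_{L^2}+\|g\|_{H^1}+\ep^{-2}\|\theta\|_{H^1})$.

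\emph{Step 2 (closing).} Next I would bound the right-hand side. By \eqref{gF} and \eqref{infW}, $\|F\|_{L^2}\leq C(\|\pt_t\psi\|_{L^2}+\|\psi\|_{H^1}+\|\nabla\div\psi\|_{L^2})$ and $\|g\|_{H^1}=\|\div\psi\|_{H^1}\leq C(\|\nabla\psi\|_{L^2}+\|\nabla\div\psi\|_{L^2})$; it is crucial \emph{not} to bound $\|\div\psi\|_{H^1}$ naively by $\|\psi\|_{H^2}$ (which would be circular), but to keep $\|\nabla\div\psi\|_{L^2}$, which is already controlled by \eqref{ene3}. Squaring, integrating over $[0,t]$, and invoking \eqref{Poincarerho} for the $L^2$-part of $\varphi$, one is left to bound $\int_0^t(\|\pt_t\psi\|_{L^2}^2+\|\psi\|_{H^1}^2+\|\nabla\div\psi\|_{L^2}^2+\ep^{-2}\|\theta\|_{H^1}^2)\,ds$; here $\int_0^t(\|\psi\|_{H^1}^2+\ep^{-2}\|\theta\|_{H^1}^2)$ is dominated by $A_0(t)\leq CA_0(0)$ via \eqref{eneA0}, $\int_0^t\|\pt_t\psi\|_{L^2}^2\leq A_1(t)$ by \eqref{ene2}, and $\int_0^t\|\nabla\div\psi\|_{L^2}^2$ by \eqref{ene3}. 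Both \eqref{ene2} and \eqref{ene3} carry the extra term $C\ep^{-2}\int_0^t\|\nabla\varphi\|_{L^2}^2\,ds=C\ep^2\cdot\ep^{-4}\int_0^t\|\varphi\|_{H^1}^2\,ds$, which is absorbed into the left of \eqref{ene4} for $\ep\leq\vep_2$; the surviving terms are then $\leq C(A_1(0)+\ep^{-2}A_0(0))$, which gives \eqref{ene4}.

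\emph{Main obstacle.} The hard part will be managing this web of absorptions simultaneously: the effective pressure produces $\ep^{-2}\|\nabla\varphi\|_{L^2}$ together with the awkward term $\varphi\nabla\tfT$ forced by the non-constant background temperature, while the Stokes data $F$ and $g$ reintroduce $\|\nabla\div\psi\|_{L^2}$, which through \eqref{ene3} feeds $\ep^{-2}\|\nabla\varphi\|_{L^2}$ back into the inequality; the whole thing closes only because every trade of a spatial derivative of $\varphi$, or of a power of $\theta$ or $\tfT-1$, gains a factor $\ep$, and this is precisely what dictates the smallness threshold $\vep_2$.
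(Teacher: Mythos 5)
Your proposal is correct and follows essentially the same route as the paper: apply \Cref{Stokes lemma} to the reformulation \eqref{Stokes2}, extract $\ep^{-2}\|\nabla\varphi\|_{L^2}$ from the pressure gradient by peeling off the $\theta$-contributions and the terms carrying $\nabla\tfT$ or the nonlinearities (each of which gains a factor of $\ep$ via \eqref{ob0}, \eqref{infW} and \Cref{Poin}), control $F$ and $g$ through $\|\pt_t\psi\|_{L^2}$, $\|\psi\|_{H^1}$ and $\|\nabla\div\psi\|_{L^2}$, then close with \eqref{eneA0}, \eqref{ene1}--\eqref{ene3} and absorb the residual $C\ep^{-2}\int_0^t\|\nabla\varphi\|^2_{L^2}\,ds$ for $\ep\leq\vep_2$. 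The only cosmetic difference is that the paper treats $\pt_1 P$ and $\pt_2 P$ separately (via the decomposition $\pt_2P=H_1+H_2$) whereas you handle the full gradient at once; this does not change the argument.
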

\begin{proof}
We first obtain from \Cref{Stokes lemma} that
\begin{align}
	\mu\|\psi\|^2_{H^2}+\ep^{-4}\|\nabla P(\rho,\fT)\|^2_{L^2}
	\leq& C(\|F\|^2_{L^2}+\|g\|^2_{L^2})\notag\\
	\leq& C\big(\|\nabla\div\psi\|^2_{L^2}+\|\psi\|^2_{H^1}+\|\pt_t\psi\|^2_{L^2}\big), \label{Stokes3}
\end{align} 
where we have used \eqref{N} and the \Cref{Poin}.

Next, we observe that 
\begin{align}
	\pt_{1}P(\rho,\fT)=\fT\pt_{1}\rho+\rho\pt_{1}\fT
	=\fT\pt_{1}\varphi+\rho\pt_{1}\theta,     \notag
\end{align}
which, together with \eqref{infW}, gives that
\begin{align}
	&\ep^{-4}\int_{0}^{t}\|\pt_{1}\varphi(s)\|^2_{L^2}ds 
	\leq C\ep^{-4}\int_{0}^{t}\big(\|\pt_{1} P(\rho,\fT)(s)\|^2_{L^2}  
	+\|\pt_{1} \theta(s)\|^2_{L^2}\big)ds. \label{ptx1rho} 
\end{align}	
Similarly, it follows from \eqref{normal2}, \eqref{H1} and \eqref{infW} that
\begin{align}
	\ep^{-4}\int_{0}^{t}\|H_1(s)\|^2_{L^2}ds 
	\leq& 2\ep^{-4}\int_{0}^{t}\big(\|\pt_{2} P(\rho,\fT)(s)\|^2_{L^2}  
	+\|H_2(s)\|^2_{L^2}\big)ds \notag\\
	\leq& 2\ep^{-4}\int_{0}^{t}\|\pt_{2} P(\rho,\fT)(s)\|^2_{L^2} ds 
	+C\ep^{-4}\int_{0}^{t}\|\theta(s)\|^2_{H^1}ds,  \label{combine1} 
\end{align}
and
\begin{align}
	\ep^{-4}\int_{0}^{t}\|\pt_2\varphi(s)\|^2_{L^2}ds
	\leq& C\ep^{-4}\int_{0}^{t}\|\tfT\pt_2\varphi(s)\|^2_{L^2}ds\notag\\
	\leq& C\ep^{-4}\int_{0}^{t}\|H_1(s)\|^2_{L^2}ds
	+C\ep^{-4}\int_{0}^{t}\|\nabla\tfT\|^2_{L^\infty}\|\varphi(s)\|^2_{L^2}ds\notag\\
	\leq& C\ep^{-4}\int_{0}^{t}\|H_1(s)\|^2_{L^2}ds+C\ep^{-2}\int_{0}^{t}\|\varphi(s)\|^2_{L^2}ds.\label{combine2} 
\end{align}
Combining \eqref{combine1} and \eqref{combine2} derives that
\begin{align}
	\ep^{-4}\int_{0}^{t}\|\pt_2\varphi(s)\|^2_{L^2}ds
	\leq&  C\ep^{-4}\int_{0}^{t}\big(\|\nabla P(\rho,\fT)(s)\|^2_{L^2}+\|\theta(s)\|^2_{H^1}\big)ds\notag\\
	&+C\ep^{-2}\int_{0}^{t}\|\varphi(s)\|^2_{L^2}ds. \label{ptx2rho}
\end{align}

Finally, it follows from  \eqref{Stokes3}, \eqref{ptx1rho}, \eqref{ptx2rho} and \Cref{Poin} that
\begin{align}
	&\int_{0}^{t}\|\psi(s)\|^2_{H^2}ds+\ep^{-4}\int_{0}^{t}\|\nabla\varphi(s)\|^2_{L^2}ds\notag\\
    \leq& \int_{0}^{t}\|\psi(s)\|^2_{H^2}ds+ C\ep^{-4}\int_{0}^{t}\big(\|\nabla P(\rho,\fT)(s)\|^2_{L^2}+\|\theta(s)\|^2_{H^1}\big)ds
    +C\ep^{-2}\int_{0}^{t}\|\varphi(s)\|^2_{L^2}ds\notag\\
	\leq&  C\int_{0}^{t}\big(\|\nabla\div\psi\|^2_{L^2}+\|\psi\|^2_{H^1}+\|\pt_t\psi\|^2_{L^2} 
	+\ep^{-4}\|\theta(s)\|^2_{H^1}\big)ds+C\ep^{-2}\int_{0}^{t}\|\nabla\varphi(s)\|^2_{L^2}ds,\notag
\end{align}
which, together with \eqref{ene1}, \eqref{ene2}, \eqref{ene3} and \eqref{eneA0}, leads to 
\begin{align}
		&\int_{0}^{t}\|\psi(s)\|^2_{H^2}ds+\ep^{-4}\int_{0}^{t}\|\nabla\varphi(s)\|^2_{L^2}ds\notag\\
		\leq&  CA_1(0)+C\ep^{-2}A_0(0)+C\ep^{-2}\int_{0}^{t}\|\nabla\varphi(s)\|^2_{L^2}ds. \notag
\end{align}
Therefore, with the help of \Cref{Poin}, there exists a positive constant $\vep_2$ depending only on $\Re$, $\Pr$, $\frac{\mup}{\mu}$ and $\gamma$, such that if $\ep\leq \vep_2$, then \eqref{ene4} holds.

The proof is completed.
\end{proof}

To obtain the estimate for $\int_{0}^{T}\ep^{-2}\| \nabla^2\theta\|^2_{L^2}dt$, we note that $\theta$ satisfies the elliptic equation
\begin{equation}
	\left.\begin{cases}
		\begin{aligned}
	-\kappa\Delta \theta=&-\rho\big(\pt_t\theta+ u\cdot\nabla\theta+\psi\cdot\nabla\tfT\big)
	+P(\rho,\fT) \div\psi  \\
	&+ \ep^{2}\big(2\mu|\Dpsi|^2+\mup(\div\psi)^2+4\mu \Dtu:\Dpsi\big)
	\quad\quad\text{in}\,\,\Omega, 
	\end{aligned}\\
		\theta|_{x_2=0,1}=0.
	\end{cases}\label{ellptictheta}
	\right.
\end{equation}
Then, we have the following lemma.
\begin{lemma} \label{lemma4}
	Suppose that \eqref{chi} holds, and suppose that $\Re<\Re_0$ and $\ep\leq\vep_2$ as in Lemma \ref{lemma3}. Then, we have
\begin{align}
	\ep^{-2}\int_{0}^{t}\|\nabla^2 \theta(s)\|^2_{L^2}ds
	\leq C\big(A_1(0)+\ep^{-2}A_0(0)\big),\quad\quad\quad \forall t\in[0,T].  	\label{ene5}
\end{align}
\end{lemma}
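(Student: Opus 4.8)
The plan is to read the elliptic identity \eqref{ellptictheta} as a Poisson problem for $\theta$ with homogeneous Dirichlet condition on $\{x_2=0,1\}$ and periodicity in $x_1$, apply the classical $H^2$ elliptic estimate, bound the $L^2$-norm of the right-hand side by quantities already controlled by $A_0$ and $A_1$, and then integrate in time against the weight $\ep^{-2}$.

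First I would record, from standard $H^2$ elliptic regularity for the Laplacian with homogeneous Dirichlet data applied to \eqref{ellptictheta},
\[
\|\nabla^2\theta\|_{L^2}\le C\big\|\rho(\pt_t\theta+u\cdot\nabla\theta+\psi\cdot\nabla\tfT)-P(\rho,\fT)\div\psi-\ep^2(2\mu|\Dpsi|^2+\mup(\div\psi)^2+4\mu\,\Dtu:\Dpsi)\big\|_{L^2},
\]
where $C$ absorbs $\kappa^{-1}$ and hence may depend on $\Re$. Then I would estimate the right-hand side term by term using \Cref{elementary} — that is, the uniform bounds \eqref{ob0} and \eqref{infW} — together with $\|\tu\|_{L^\infty}+\|\nabla\tu\|_{L^\infty}\le C$ coming from \eqref{Couette1}: the terms $\rho\pt_t\theta$, $\rho u\cdot\nabla\theta=\rho(\tu+\psi)\cdot\nabla\theta$ and $P(\rho,\fT)\div\psi=\rho\fT\div\psi$ are bounded by $C(\|\pt_t\theta\|_{L^2}+\|\nabla\theta\|_{L^2}+\|\nabla\psi\|_{L^2})$, using $\|\psi\|_{L^\infty}^2\le\tC\Nhat\le\tC$; the term $\rho\psi\cdot\nabla\tfT$ contributes only $C\ep\|\psi\|_{L^2}$ since $\|\nabla\tfT\|_{L^\infty}\le\tC\ep$ by \eqref{Couette1} and \eqref{chi}; and the quadratic dissipation terms are handled pointwise through $\ep^2(|\Dpsi|^2+(\div\psi)^2)\le C\ep^2\|\nabla\psi\|_{L^\infty}|\nabla\psi|\le C\ep|\nabla\psi|$, where I use $\|\nabla\psi\|_{L^\infty}^2\le\tC\Nhat\ep^{-2}$ from \eqref{infW}, while $\ep^2|\Dtu:\Dpsi|\le C\ep^2|\nabla\psi|$. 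This yields the pointwise-in-time bound
\[
\|\nabla^2\theta\|_{L^2}\le C\big(\|\pt_t\theta\|_{L^2}+\|\nabla\theta\|_{L^2}+\|\nabla\psi\|_{L^2}+\ep\|\psi\|_{L^2}\big).
\]

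Next I would square this inequality, multiply by $\ep^{-2}$ and integrate over $[0,t]$ to get
\[
\ep^{-2}\int_0^t\|\nabla^2\theta(s)\|_{L^2}^2\,ds\le C\int_0^t\big(\ep^{-2}\|\pt_t\theta\|_{L^2}^2+\ep^{-2}\|\nabla\theta\|_{L^2}^2+\ep^{-2}\|\nabla\psi\|_{L^2}^2+\|\psi\|_{L^2}^2\big)\,ds.
\]
By the definitions collected in \eqref{N}, the first integral on the right is dominated by $A_1(t)$, the second and fourth by $A_0(t)$, and the third by $\ep^{-2}A_0(t)$. Invoking \eqref{eneA0} for $A_0$, and combining \eqref{ene2} with \eqref{ene4} (using $\|\pt_1\varphi\|_{L^2}^2\le\|\varphi\|_{H^1}^2$ and $\ep\le1$) to obtain $A_1(t)\le C(A_1(0)+\ep^{-2}A_0(0))$, I would conclude \eqref{ene5}.

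I expect the only genuinely delicate point to be the quadratic dissipation terms $\ep^2|\Dpsi|^2$ and $\ep^2(\div\psi)^2$: it is essential to bound them via the $L^\infty$-estimate $\|\nabla\psi\|_{L^\infty}^2\le\tC\Nhat\ep^{-2}$ (so that one power of $\ep$ is consumed and a surplus power of $\ep$ survives), rather than by Gagliardo–Nirenberg, which would reintroduce a term $\|\nabla^2\psi\|_{L^2}$ on the right-hand side that is not yet available at this stage of the estimates. Everything else is routine bookkeeping of $\ep$-weights against the functionals $A_0$ and $A_1$.
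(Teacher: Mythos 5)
Your proposal is correct and follows essentially the same route as the paper: the $H^2$ elliptic estimate applied to \eqref{ellptictheta}, term-by-term bounds of the right-hand side using \eqref{ob0}--\eqref{infW} (in particular $\ep^2\|\nabla\psi\|_{L^\infty}^2\le\tC\Nhat$ to absorb the quadratic dissipation terms), and the final bookkeeping that controls $A_1(t)$ by $A_1(0)+\ep^{-2}A_0(0)$ via \eqref{ene2} combined with \eqref{ene4}. No gaps.
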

\begin{proof}
	Based on the classical elliptic theory \cite{ADN1959,ADN1964}, we obtain from \eqref{ellptictheta}, \eqref{infW} and \eqref{ene2} that
	\begin{align}
		&\ep^{-2}\int_{0}^{t}\|\nabla^2 \theta(s)\|^2_{L^2}ds\notag\\
		\leq& C\ep^{-2}\int_{0}^{t}\big(\|\pt_t \theta(s)\|^2_{L^2}+\|\nabla\theta(s)\|^2_{L^2}
		+\|\nabla\tfT\|^2_{L^\infty}\|\psi(s)\|^2_{L^2}
		+\ep^2(1+\|\nabla\psi(s)\|^2_{L^\infty})\|\nabla \psi(s)\|^2_{L^2}\big)ds\notag\\
		\leq& C\ep^{-2}\int_{0}^{t}\big(\|\pt_t \theta(s)\|^2_{L^2}+\|\theta(s)\|^2_{H^1}
		+\|\psi(s)\|^2_{H^1}\big)ds\notag\\
		\leq& CA_1(0)+C\ep^{-2}A_0(0),  
		\quad\quad\quad \forall t\in[0,T].  \notag
	\end{align}

The proof is completed.
\end{proof}

In summary of Lemma \ref{lemma1}--\ref{lemma4}, we have the following lemma.
\begin{lemma}\label{lemmaH1}
		Suppose that \eqref{chi} holds, and suppose that $\Re<\Re_0$ and $\ep\leq\vep_2$ as in Lemma \ref{lemma3}. Then, we have
	\begin{align}
    &A_1(t)+A_2(t) \leq C\big(A_1(0)+A_2(0)+\ep^{-2}A_0(0)\big), 
    \quad\quad \forall t\in[0,T].  \label{eneA12}
	\end{align}
\end{lemma}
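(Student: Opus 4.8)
The plan is to assemble \Cref{lemmaH1} by simply collecting and chaining together the four estimates established in \Cref{lemma1}, \Cref{lemma2}, \Cref{lemma3} and \Cref{lemma4}, since each of them already controls one of the constituent pieces of $A_1(t)$ and $A_2(t)$. Recall from the definitions \eqref{A3} that $A_1(t)$ consists of the time-sup of $\|\nabla\psi\|_{L^2}^2+\ep^{-2}\|\nabla\theta\|_{L^2}^2$ together with the time-integral of $\|\pt_t\psi\|_{L^2}^2+\ep^{-2}\|\pt_t\theta\|_{L^2}^2$, while $A_2(t)$ collects the time-sup of $\ep^{-2}\|\nabla\varphi\|_{L^2}^2$ and the time-integrals of $\ep^{-4}\|\varphi\|_{H^1}^2$, $\ep^{-2}\|\pt_t\varphi\|_{L^2}^2$, $\|\psi\|_{H^2}^2$ and $\ep^{-2}\|\theta\|_{H^2}^2$. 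So the claim is really just a bookkeeping statement that all of these are bounded by $A_1(0)+A_2(0)+\ep^{-2}A_0(0)$.

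The key steps, in order, would be as follows. First, I would invoke \eqref{ene1} and \eqref{ene2} from \Cref{lemma1} to control $A_1(t)$ and the integral $\int_0^t\|\pt_t\varphi\|_{L^2}^2\,ds$ in terms of $A_1(0)$, $\ep^{-2}A_0(0)$ and the residual term $\ep^{-2}\int_0^t\|\pt_1\varphi\|_{L^2}^2\,ds$. Second, I would bring in \eqref{ene3} from \Cref{lemma2}, which upgrades to control of $\ep^{-2}\sup_{[0,t]}\|\nabla\varphi\|_{L^2}^2$ at the cost of the same residual $\ep^{-2}\int_0^t\|\nabla\varphi\|_{L^2}^2\,ds$ on the right-hand side. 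Third — and this is where the residual term finally gets absorbed — I would use \eqref{ene4} from \Cref{lemma3}, which (via the Stokes elliptic estimate of \Cref{Stokes lemma} and the Poincar\'e inequality \eqref{Poincarerho}) controls $\int_0^t\|\psi\|_{H^2}^2\,ds+\ep^{-4}\int_0^t\|\varphi\|_{H^1}^2\,ds$ purely by $A_1(0)+\ep^{-2}A_0(0)$, with no leftover gradient-of-$\varphi$ term; in particular $\ep^{-4}\int_0^t\|\varphi\|_{H^1}^2\,ds$ dominates (up to constants and powers of $\ep\le1$) the residual $\ep^{-2}\int_0^t\|\nabla\varphi\|_{L^2}^2\,ds$, so it can be reabsorbed into the left side of the chain from steps one and two — this is presumably why $\vep_2$ (small in terms of $\Re$) is required. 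Fourth, I would append \eqref{ene5} from \Cref{lemma4} for the remaining term $\ep^{-2}\int_0^t\|\nabla^2\theta\|_{L^2}^2\,ds$ in $A_2(t)$. Adding the resulting inequalities and noting that each piece of $A_1(t)+A_2(t)$ has now been bounded, I obtain \eqref{eneA12}.

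The only genuine subtlety — the ``main obstacle'' such as it is — is the circular-looking appearance of the residual term $\ep^{-2}\int_0^t\|\nabla\varphi(s)\|_{L^2}^2\,ds$ on the right-hand sides of \eqref{ene2} and \eqref{ene3}: one must check that \eqref{ene4} genuinely breaks the circle by controlling a \emph{higher-weighted} norm $\ep^{-4}\int_0^t\|\varphi\|_{H^1}^2\,ds$ of the same quantity without feeding any $\varphi$-gradient term back in, so that after multiplying \eqref{ene4} by a suitable small constant and using $\ep\le 1$ the term can be moved to the left. Everything else is linear combination of already-proved inequalities together with the trivial observation $A_2(0)\ge 0$, which lets one write the right-hand side in the symmetric form $A_1(0)+A_2(0)+\ep^{-2}A_0(0)$. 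I would close by remarking that all constants depend only on $\Re,\Pr,\tfrac{\mup}{\mu},\gamma$, consistent with the generic constant $C$ fixed at the start of the section.
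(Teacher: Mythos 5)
Your proposal is correct and matches the paper's proof, which is exactly this assembly: \eqref{eneA12} is obtained by combining \eqref{ene1}, \eqref{ene2}, \eqref{ene3}, \eqref{ene4} and \eqref{ene5}, with the residual terms $\ep^{-2}\int_0^t\|\pt_1\varphi\|^2_{L^2}\,ds$ and $\ep^{-2}\int_0^t\|\nabla\varphi\|^2_{L^2}\,ds$ dominated (since $\ep\le 1$) by $\ep^{-4}\int_0^t\|\varphi\|^2_{H^1}\,ds$, which \eqref{ene4} already bounds purely by initial data. The only minor imprecision is that no further absorption is needed at this stage — the absorption was carried out inside \Cref{lemma3} via the choice of $\vep_2$, as you yourself note — so here one simply substitutes.
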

\begin{proof}
	\eqref{eneA12} follows from \eqref{ene1}, \eqref{ene3}, \eqref{ene4} and \eqref{ene5}. The proof is completed.
\end{proof}

\subsection{Estimates for high order derivative.}
This subsection is devoted to the high order \textit{a priori} estimates, which can be derived by using the similar argument as in the previous subsection or \cite{MN1983}. Comparing to \cite{MN1983} and the argument in the previous subsection, the major difference are the terms that come from the products of lower order derivatives of $\tW$ and $w$ in \eqref{eq1}--\eqref{eq3}. Such terms can be dealt with by using the fact that $\div\tu=0$ and the \textit{a priori} estimates for lower order derivatives obtained in \eqref{eneA0} and \eqref{eneA12}. Therefore, we merely sketch the proof.

\begin{lemma} \label{lemmaH2temp}
	Suppose that \eqref{chi} holds, and suppose that $\Re<\Re_0$ and $\ep\leq\vep_2$ as in Lemma \ref{lemma3}. Then, 
		\begin{align}
		\ep^{2}A_3(t)\leq C\big(\ep^{2}A_3(0)+A_1(0)+A_2(0)+\ep^{-2}A_0(0)\big),
		\quad  \forall t\in[0,T],\label{eneA3}
	\end{align}
and 
    	\begin{align}
    	\ep^2\|\nabla^2\psi(t)\|^2_{L^2}+\|\nabla^2\theta(t)\|^2_{L^2}
    	\leq C\big(\ep^{2}A_3(0)+A_1(0)+A_2(0)+\ep^{-2}A_0(0)\big),\quad  \forall t\in[0,T]. \label{eneA4temp}
    \end{align}
\end{lemma}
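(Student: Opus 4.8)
The plan is to differentiate the perturbation system \eqref{eq1}--\eqref{eq3} once in $t$ and run the same weighted energy argument that produced \eqref{ene2} and the estimates in \Cref{lemmaH1}, tracking carefully the commutator terms generated by $\pt_t$ hitting the coefficients $\rho$, $u$ and the Couette background $\tW$. First I would apply $\pt_t$ to \eqref{eq1}, \eqref{eq2}, \eqref{eq3}, obtaining evolution equations for $\pt_t\varphi$, $\pt_t\psi$, $\pt_t\theta$ of exactly the same structure as the originals, with right-hand sides that are quadratic combinations of $(\varphi,\psi,\theta,\pt_t\varphi,\pt_t\psi,\pt_t\theta)$ and their first spatial derivatives, multiplied by coefficients bounded by \eqref{ob0}--\eqref{infW}. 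Crucially, because $\div\tu=0$, $\tu^2=0$ and $\pt_1\tW=0$, the potentially dangerous convective terms $\pt_t(u\cdot\nabla)\cdot$ and $(\psi\cdot\nabla\tu)$ only produce terms controllable by $\|\pt_t\psi\|_{H^1}$, $\|\psi\|_{H^1}$ and the already-closed lower-order quantities $A_0, A_1, A_2$; this is the same mechanism flagged in the remark opening this subsection.

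Next I would carry out the three energy steps in order, mirroring \Cref{lemma1}. (i) Multiply the $\pt_t$-differentiated temperature equation by $\pt_t^2\theta$ and the $\pt_t$-differentiated momentum equation by $\pt_t^2\psi$, integrate by parts, and use Young's inequality to absorb $\delta\|\pt_t^2\psi\|_{L^2}^2+\delta\|\pt_t^2\theta\|_{L^2}^2$; the pressure term $\ep^{-2}\nabla\pt_t P(\rho,\fT)$ is handled exactly as in \eqref{pt4}--\eqref{pt6}, writing $\pt_t\nabla\pt_t\psi$ as a total time derivative plus $\ep^{-2}\pt_t^2(\rho\fT)\div\pt_t\psi$, which costs $\ep^{-2}(\|\pt_t^2\varphi\|_{L^2}^2+\|\pt_t^2\theta\|_{L^2}^2+\cdots)$. (ii) Bound $\pt_t^2\varphi$ directly from the $\pt_t$-differentiated continuity equation as in \eqref{pt1}, giving $\int_0^t\|\pt_t^2\varphi\|_{L^2}^2\le C\int_0^t(\|\pt_t\psi\|_{H^1}^2+\|\pt_1\pt_t\varphi\|_{L^2}^2)$, and then recover $\ep^{-2}\|\nabla\pt_t\varphi\|_{L^2}^2$ plus $\int_0^t\|\nabla\div\pt_t\psi\|_{L^2}^2$ by repeating the $x_1$- and $x_2$-differentiation scheme of \Cref{lemma2} applied to the $\pt_t$-system (the $H_1,H_2$ splitting \eqref{normal2}--\eqref{H1} goes through with $\varphi\mapsto\pt_t\varphi$, $\theta\mapsto\pt_t\theta$). (iii) Feed these into \eqref{ene2} written at the $\pt_t$-level. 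Summing the three steps, choosing $\delta$ small, and invoking \Cref{basic} and \Cref{lemmaH1} to dispose of all $A_0,A_1,A_2$ terms on the right, one arrives at \eqref{eneA3}.

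For \eqref{eneA4temp} I would then use the elliptic/Stokes structure already exploited in \Cref{lemma3,lemma4}: the Stokes system \eqref{Stokes2}--\eqref{gF} gives $\mu\|\psi\|_{H^2}^2\le C(\|\pt_t\psi\|_{L^2}^2+\|\nabla\div\psi\|_{L^2}^2+\|\psi\|_{H^1}^2)$ pointwise in $t$, and the elliptic equation \eqref{ellptictheta} gives $\|\nabla^2\theta\|_{L^2}^2\le C(\|\pt_t\theta\|_{L^2}^2+\|\theta\|_{H^1}^2+\|\psi\|_{H^1}^2+\ep^2\cdots)$; multiplying the first by $\ep^2$, adding, and bounding the right side by $\ep^2A_3(t)+A_1(t)+A_2(t)$ yields \eqref{eneA4temp} once \eqref{eneA3} is in hand.

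I expect the main obstacle to be step (i)–(ii) bookkeeping of the pressure and convective commutators at the $\pt_t$-level while keeping every $\ep$-weight correct: the term $\ep^{-2}\pt_t^2(\rho\fT)\div\pt_t\psi$ must be split so that $\pt_t^2\varphi$ and $\pt_t^2\theta$ appear only with the $\ep^{-2}$ weight that $A_3$'s running integral can afford, and the boundary integrals from integrating $\pt_t^2\psi\cdot\Delta\pt_t\psi$ by parts must be shown to vanish using $\pt_t\psi|_{x_2=0,1}=0$ together with the compatibility condition \eqref{compatibilitycon1}. Once the correct weights are assigned this is routine, so I would only sketch it, as the lemma statement's surrounding text invites.
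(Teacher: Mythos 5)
There is a genuine gap in your treatment of \eqref{eneA3}, stemming from the choice of multiplier. The quantities in $A_3$ are $\sup_t\|\pt_t\psi\|_{L^2}^2$, $\sup_t\ep^{-2}\|(\pt_t\varphi,\pt_t\theta)\|_{L^2}^2$ and the time integrals of $\|\nabla\pt_t\psi\|_{L^2}^2$, $\ep^{-2}\|\nabla\pt_t\theta\|_{L^2}^2$; these come from an $L^2$-level energy estimate for the time-differentiated system, i.e.\ from the multipliers $(\pt_t\varphi,\ \ep^2\pt_t\psi,\ \pt_t\theta)$, which is what the paper uses. Your step (i) multiplies by $(\pt_t^2\psi,\pt_t^2\theta)$, which instead produces $\sup_t\|\nabla\pt_t\psi\|_{L^2}^2$ and $\int_0^t\|\pt_t^2\psi\|_{L^2}^2$ --- the $A_5$ quantities of \Cref{lemmaH3}, not $A_3$. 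Moreover, handling the pressure ``exactly as in \eqref{pt4}--\eqref{pt6}'' at the $\pt_t$ level leaves $\ep^{-4}\|(\pt_t\varphi,\pt_t\theta)\|_{L^2}^2$ in the sup-norm part, i.e.\ a contribution of size $\ep^{-2}A_3$, which cannot be absorbed with the weights of $N(t)$. The paper avoids this entirely by an algebraic cancellation that your proposal misses: after integration by parts the three pressure-coupling terms combine as
\begin{align}
\int\big(\rho\,\pt_{t}\div\psi\,\pt_t\varphi+ \nabla\pt_t P(\rho,\fT)\cdot\pt_t\psi
+P(\rho,\fT)\,\pt_{t}\div\psi\,\pt_{t}\theta\big)dx
=\int\big((\rho-\fT)\pt_{t}\div\psi\,\pt_t\varphi+\rho(\fT-1)\pt_{t}\div\psi\,\pt_{t}\theta\big)dx, \notag
\end{align}
and the surviving coefficients $\rho-\fT$ and $\fT-1$ are $O(\ep)$ by \eqref{ob0} and \eqref{infW}, so the $\ep^{-2}$ singularity is neutralized and the $\div\pt_t\psi$ factor is absorbed into the viscous dissipation. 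Your steps (ii)--(iii), which aim at $\nabla\pt_t\varphi$ and $\nabla\div\pt_t\psi$, are not needed for $A_3$ at all.

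For \eqref{eneA4temp} your route is also flawed as written: the pointwise-in-$t$ Stokes bound you quote carries $\|\nabla\div\psi(t)\|_{L^2}^2$ on the right-hand side, which at this stage is controlled only in the time-integrated sense (via \eqref{ene3} and $A_2$), not pointwise in $t$ --- indeed $\sup_t\|\nabla^2\psi(t)\|_{L^2}^2$ is precisely what \eqref{eneA4temp} is trying to establish, so the argument is circular. The paper instead rewrites \eqref{eq2} as the Lam\'{e} system \eqref{elliptic} and applies \Cref{Lame lemma}, so that the full second-order operator $-\mu\Delta\psi-(\mu+\mup)\nabla\div\psi$ sits on the left and the right-hand side involves only $\ep^{-2}\nabla P$, $\pt_t\psi$ and first-order quantities, all of which are controlled pointwise in $t$ by the sup-norm parts of $A_1$, $A_2$ and $A_3$. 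The corresponding elliptic estimate for $\theta$ via \eqref{ellptictheta} in your proposal is fine.
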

\begin{proof}

    Taking the derivative with respect to $t$ on \eqref{eq1}--\eqref{eq3} and then multiplying the resulting equations by $\pt_{t}\varphi$, $\ep^2\pt_{t}\psi$ and $\pt_{t}\theta$, respectively, we obtain, by using the method of integration by parts, that
    \begin{align}
    	&\int\frac{1}{2}\big(|\pt_t\varphi|^2+\ep^2\rho|\pt_t\psi|^2+\rho|\pt_t\theta|^2\big)(t)dx\notag\\
    	&+\ep^2\int_{0}^{t}\big(\mu\|\nabla\pt_t\psi(s)\|^2_{L^2}+(\mu+\mup)\|\div\pt_t\psi(s)\|^2_{L^2}\big)ds
    	+\int_{0}^{t}\kappa\|\nabla\pt_t\theta(s)\|^2_{L^2}ds\notag\\
    	\leq&\int\frac{1}{2}\big(|\pt_t\varphi|^2+\ep^2\rho|\pt_t\psi|^2+\rho|\pt_t\theta|^2\big)(0)dx+ \frac{1}{2}\ep^2\int_{0}^{t}\mu\|\nabla\pt_{t}\psi(s)\|^2_{L^2}ds\notag\\
    	&+ \frac{1}{2}\int_{0}^{t}\kappa\|\nabla\pt_{t}\theta(s)\|^2_{L^2}ds+C(A_0(t)+A_1(t)+A_2(t)),   \label{T0}
    \end{align}
    where we have used \eqref{ob0}, \eqref{infW} and the facts that
    \begin{align}
    	\int \pt_t\varphi (u\cdot\nabla\pt_t\varphi) dx
    	=&-\int \frac{1}{2}\div\psi |\pt_t\varphi|^2 dx
    	\leq C\|\nabla\psi\|_{L^\infty}\|\pt_{t}\varphi\|^2_{L^2}
    	\leq C\ep^{-2}\|\pt_{t}\varphi\|^2_{L^2},    \notag\\
    	\ep^2\int (\pt_{t}(\rho\psi)\cdot\nabla\psi)\cdot\pt_{t}\psi dx
    	\leq& C\ep^2(\|\pt_{t}\varphi \|_{L^2}+\|\pt_{t}\psi \|_{L^2})\|\nabla\psi \|_{L^\infty}
    	\|\pt_{t}\psi \|_{L^2}\notag\\
    	\leq& C\ep(\|\pt_{t}\varphi \|^2_{L^2}+\|\pt_{t}\psi \|^2_{L^2}),\notag
    \end{align} 
and
    \begin{align}
    	&\int\big(\rho\pt_{t}\div\psi\pt_t\varphi+ \nabla\pt_t P(\rho,\fT)\cdot\pt_t\psi
    	+P(\rho,\fT)\pt_{t}\div\psi\pt_{t}\theta\big)dx  \notag\\
    	=&\int\big((\rho-\fT)\pt_{t}\div\psi\pt_t\varphi+\rho(\fT-1)\pt_{t}\div\psi\pt_{t}\theta\big)dx \notag\\
    	\leq &(\|\rho-1\|_{L^\infty}+\|\fT-1\|_{L^\infty})\|\pt_{t}\varphi\|_{L^2}\|\pt_{t}\div\psi\|_{L^2} 
    	+\|\rho\|_{L^\infty}\|\fT-1\|_{L^\infty}\|\pt_{t}\theta\|_{L^2}\|\pt_{t}\div\psi\|_{L^2} \notag\\
    	\leq& C\ep\|\pt_{t}\varphi\|_{L^2}\|\pt_{t}\div\psi\|_{L^2}
    	+C\ep\|\pt_{t}\theta\|_{L^2}\|\pt_{t}\div\psi\|_{L^2}\notag\\
    	\leq& \frac{1}{4}\ep^2\mu\|\nabla\pt_{t}\psi\|^2_{L^2}
    	+C\big(\|\pt_{t}\varphi\|^2_{L^2}+\|\pt_{t}\theta\|^2_{L^2}\big).\notag
    \end{align} 
    Thus, \eqref{T0}, together with \eqref{eneA0} and \eqref{eneA12}, gives \eqref{eneA3}.
 
    Next, we rewrite \eqref{eq2} as the Lam\'{e} system
    \begin{equation}
    	\left.\begin{cases}
    		-\mu\Delta \psi-(\mu+\mup)\nabla\div\psi
    		=-\ep^{-2}\nabla P(\rho,\fT)-\rho\big(\pt_t\psi+u\cdot\nabla \psi+\psi\cdot\nabla\tu\big),
    		\quad\text{in}\,\,\Omega,\\
    		\psi|_{x_2=0,1}=0,
    	\end{cases}\label{elliptic}
    	\right.
    \end{equation}
    Applying \Cref{Lame lemma} to the boundary value problem \eqref{elliptic}, we obtain, by using \eqref{infW} and \eqref{eneA3}, that
    \begin{align}
    	\ep^2\|\nabla^2\psi(t)\|^2_{L^2}
    	\leq& C\big(\ep^{-2}\|\varphi(t)\|^2_{H^1}+\ep^{-2}\|\theta(t)\|^2_{H^1}
    	 +\ep^2\|\psi(t)\|^2_{H^1}+ \ep^2\|\pt_{t}\psi(t)\|^2_{L^2}\big)\notag\\
    	 \leq& C\big(\ep^2A_3(0)+A_1(0)+A_2(0)+\ep^{-2}A_0(0)\big),\quad\quad  \forall t\in[0,T].
    \end{align}  
    Similarly, applying the elliptic theory \cite{ADN1959,ADN1964} to the boundary value problem  \eqref{ellptictheta}, we have 
      \begin{align}
    	\|\nabla^2\theta(t)\|^2_{L^2}
    	\leq& C\big(\|\psi(t)\|^2_{H^1}+\|\pt_{t}\theta(t)\|^2_{L^2}
    	+\|\nabla\theta(t)\|^2_{L^2}\big)\notag\\
    	\leq& C\big(\ep^2A_3(0)+A_1(0)+A_2(0)+\ep^{-2}A_0(0)\big), \quad\quad  \forall t\in[0,T].
    \end{align} 
The proof is completed.
\end{proof}

Next, we derive the $\ep$-weighted $H^2$-type estimates on $(\varphi,\psi,\theta)$ as follows.
\begin{lemma}\label{lemmaH2}
	Suppose that \eqref{chi} holds, and suppose that $\Re<\Re_0$ and $\ep\leq\vep_2$ as in Lemma \ref{lemma3}. Then, there exist a positive constant $N_1$ depending only on $\Re$, $\Pr$, $\frac{\mup}{\mu}$ and $\gamma$, such that if $\Nhat\leq N_1$, then we have
\begin{align}
	\ep^{2}A_4(t)\leq C\big(\ep^2A_3(0)+\ep^2A_4(0)+A_1(0)+A_2(0)+\ep^{-2}A_0(0)\big), \quad\quad \forall t\in[0,T]. \label{eneA4}
\end{align}
\end{lemma}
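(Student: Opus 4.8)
The plan is to establish the $\ep$-weighted $H^2$-estimates contained in $A_4(t)$ by differentiating the perturbation system \eqref{eq1}--\eqref{eq3} twice in the horizontal variable $x_1$, integrating the resulting equations against the natural multipliers, and then recovering the remaining second-order spatial derivatives through the elliptic structure (the Lam\'{e} system \eqref{elliptic} for $\psi$ and the elliptic equation \eqref{ellptictheta} for $\theta$, plus the transport equation for $\varphi$), much as in \Cref{lemma2,lemma3,lemma4}. The key point is that $\pt_1\trho=\pt_1\tfT=\pt_1\tu=0$, so differentiating in $x_1$ does not produce any derivatives falling on the Couette profile; the only new contributions are commutator-type products $\pt_1^2 w \cdot \nabla(\tu,\trho,\tfT)$ and genuinely nonlinear terms $\pt_1^2 w \cdot \nabla w$, $\pt_1 w \cdot \nabla \pt_1 w$, and these are controlled by the a priori bound $\Nhat\leq 1$ (via \eqref{infW}) together with the already-established lower-order estimates \eqref{eneA0}, \eqref{eneA12}, \eqref{eneA3}.

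\textbf{Step 1.} Differentiate \eqref{eq1}--\eqref{eq3} twice in $x_1$. Add the $\varphi$-equation multiplied by $\ep^{-2}\pt_1^2\varphi$, the $\psi$-equation multiplied by $\pt_1^2\psi$, and (after a $\gamma$-weighting) the $\theta$-equation multiplied by $\ep^{-2}\pt_1^2\theta$; integrate over $\Omega$. As in the basic energy estimate \eqref{I0}, the $\ep^{-2}$-weighted pressure terms cancel against the density and temperature contributions up to terms involving $\nabla\tfT=O(\ep)$, so that after integration in time one obtains
\begin{align}
	&\sup_{s\in[0,t]}\big(\ep^{-2}\|\pt_1^2\varphi(s)\|^2_{L^2}+\|\pt_1^2\psi(s)\|^2_{L^2}+\ep^{-2}\|\pt_1^2\theta(s)\|^2_{L^2}\big)
	+\int_0^t\big(\|\nabla\pt_1^2\psi\|^2_{L^2}+\ep^{-2}\|\nabla\pt_1^2\theta\|^2_{L^2}\big)ds \notag\\
	\leq& C\, N(0) + C\int_0^t\big(\ep^{-2}\|\pt_1^2\varphi\|^2_{L^2}+(\text{lower-order terms already bounded})\big)ds + (\text{small}\times\text{top order}).  \notag
\end{align}
The density term $\int_0^t\ep^{-2}\|\pt_1^2\varphi\|^2_{L^2}ds$ is then absorbed using the same trick as in \Cref{lemma2}: differentiate \eqref{eq1} once more in $x_1$ and once in $x_2$ to get a transport-type equation for $\pt_1\pt_2\varphi$ coupled with $\pt_2^2\psi^2$, combine with the $x_2$-differentiated momentum equation \eqref{eqnormal2}, and close using the Young inequality; this yields $\int_0^t\ep^{-4}\|\pt_1\varphi\|^2_{H^1}ds$ after invoking \eqref{ene1}--\eqref{ene5}.

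\textbf{Step 2.} Recover the normal derivatives. Apply \Cref{Lame lemma} to \eqref{elliptic} with the source differentiated appropriately, or more simply bound $\ep^2\|\psi\|^2_{H^3}$ in terms of $\ep^{-2}\|\nabla^2 P(\rho,\fT)\|^2_{L^2}$, $\ep^2\|\pt_t\psi\|^2_{H^1}$ and lower-order quantities, using that $\|\nabla\pt_1\psi\|_{L^2}$ and $\|\pt_t\psi\|_{L^2}$ are already controlled by \eqref{eneA3} and Step 1; the pressure Hessian in turn gives $\ep^{-4}\|\varphi\|^2_{H^2}$ modulo $\ep^{-4}\|\theta\|^2_{H^2}$, which is handled by the elliptic estimate for \eqref{ellptictheta}. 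Similarly $\ep^{-2}\|\theta\|^2_{H^3}$ follows from elliptic regularity for \eqref{ellptictheta} applied with one extra derivative, the right-hand side being controlled by $\|\pt_t\theta\|_{H^1}$, $\|\nabla\theta\|_{H^1}$, $\|\psi\|_{H^2}$ and the quadratic dissipation terms $\ep^2\|\Dpsi\|^2_{H^1}$, all absorbed by the smallness of $\ep$ and $\Nhat$. Collecting Steps 1--2 and multiplying by $\ep^2$ yields \eqref{eneA4}.

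\textbf{The main obstacle} is the bookkeeping of the nonlinear and convective terms at this order: one must verify that every product in which a derivative of $(\trho,\tu,\tfT)$ appears is either annihilated by $\pt_1$-differentiation (using $\pt_1\tW=0$ and $\div\tu=0$) or carries an explicit factor $\ep$ from $\|\nabla\tfT\|_{L^\infty}\leq\tC\ep$, and that the remaining cubic terms such as $\pt_1^2\psi\cdot\nabla\psi\,\pt_1^2\psi$ or $\ep^2(\nabla^2\psi)^2\nabla^2\psi$ are bounded, via \Cref{GN lemma} and \eqref{infW}, by $\tC\sqrt{\Nhat}$ times the dissipation and can be absorbed once $\Nhat\leq N_1$ is small. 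The coupling between the top-order $\varphi$-estimate and the $\psi$-dissipation (the former being only of transport type and hence needing the $x_2$-differentiated momentum equation to close) is the delicate point, exactly as in \Cref{lemma2}, and must be organized so that the constant $C_1$ in front of $\int_0^t\|\nabla\pt_1^2\psi\|^2_{L^2}ds$ is beaten by the viscous term after multiplying the $\varphi$-inequality by a suitable large constant.
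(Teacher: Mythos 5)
Your proposal follows essentially the same route as the paper: tangential $\pt_1^2$-energy estimates exploiting $\pt_1\tW=0$, recovery of the normal derivatives of $\varphi$ by combining the $x_2$-differentiated mass equation with the normal momentum equation \eqref{eqnormal2}, elliptic (Stokes/Lam\'{e}) estimates to recover the full second and third derivatives of $\psi$ and $\varphi$, elliptic regularity for \eqref{ellptictheta} to handle $\theta$, and absorption of the cubic terms by the smallness of $\Nhat$. The only cosmetic difference is that you include the temperature equation (tested against $\ep^{-2}\pt_1^2\theta$) in the tangential energy estimate, whereas the paper works only with \eqref{eqf1}--\eqref{eqf2} there and obtains all the $\theta$-control from the elliptic equation and the time-derivative estimates of $A_3$; both variants close.
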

\begin{proof}
	Similar to the proof of \Cref{lemma2}, we first apply the operator $\pt^2_{1}$ on both \eqref{eqf1} and \eqref{eqf2} to get
	\begin{align}
		&\pt_t\pt^2_{1}\varphi+u\cdot\nabla\pt^2_{1}\varphi+\div(\pt^2_{1}\psi)
		=\pt^2_{1}f_1+T_1,   \label{eq1t}
			\end{align}
		and
		\begin{align}
	&\rho\big(\pt_t\pt^2_{1}\psi+u\cdot\nabla\pt^2_{1}\psi+\pt^2_{1}\psi\cdot\nabla\tu\big)
	+\ep^{-2}\nabla\big(\pt^2_{1}\varphi+\pt^2_{1}\theta\big)
	-\mu\Delta\pt^2_{1}\psi-(\mu+\mup)\nabla\div\pt^2_{1}\psi\notag\\
	=&\ep^{-2}\nabla\pt^2_{1}f_2+T_2+T_3,\label{eq2t}
	\end{align}
	where
	\begin{align}
		&T_1=-\pt^2_{1}\psi\cdot\nabla\varphi-2\pt_{1}\psi\cdot\nabla\pt_{1}\varphi, \quad
		T_2=-\pt^2_{1}\varphi(\pt_t\psi+u\cdot\nabla\psi+\psi\cdot\nabla\tu), \notag\\
		&T_3=-2\pt_{1}\varphi(\pt_t\pt_{1}\psi+u\cdot\nabla\pt_{1}\psi+\pt_{1}\psi\cdot\nabla\psi). \notag
	\end{align}
	 Adding \eqref{eq1t} multiplied by $\ep^{-2}\pt^2_{1}\varphi$ to \eqref{eq2t} multiplied by $\pt^2_{1}\psi$, and then applying the method of integration by parts to the resulting equation, we obtain, by using \eqref{N}, \eqref{eneA0}, \eqref{eneA12}, \eqref{eneA3} and the Young inequality, that
    \begin{align}
    	&\|(\ep^{-1}\pt^2_{1}\varphi,\pt^2_{1}\psi)(t)\|^2_{L^2}
    	+\int_{0}^{t}\big(\|\nabla\pt^2_{1}\psi(s)\|^2_{L^2}
    	+\|\pt^2_{1}\div\psi(s)\|^2_{L^2}\big)ds\notag\\
    	\leq&C\|(\ep^{-1}\pt^2_{1}\varphi,\pt^2_{1}\psi)(0)\|^2_{L^2}
    	+C\big(\ep^{2}A_3(t)+A_1(t)+A_2(t)+\ep^{-2}A_0(t)\big)\notag\\
    	\leq&C\|(\ep^{-1}\pt^2_{1}\varphi,\pt^2_{1}\psi)(0)\|^2_{L^2}
    	+C\big(\ep^{2}A_3(0)+A_1(0)+A_2(0)+\ep^{-2}A_0(0)\big) ,    \label{enetan}
    \end{align}
    provided that $\widehat{N}$ is small enough (but independent of $\ep$ and $T$). In deriving \eqref{enetan}, we have used the fact that
	\begin{align}
		\int_{0}^{t}\int\pt^2_{1}\varphi\pt_t\psi\cdot\pt^2_{1}\psi dxds
		\leq& \int\|\pt^2_{1}\varphi(s)\|_{L^2}\|\pt_t\psi(s)\|_{L^4}\|\pt^2_{1}\psi(s)\|_{L^4}ds\notag\\
		\leq&  \int_{0}^{t}\|\pt^2_{1}\varphi(s)\|^2_{L^2}\|\nabla\pt^2_{1}\psi(s)\|^2_{L^2}ds
		+C\int_{0}^{t}\|\nabla\pt_t\psi(s)\|^2_{L^2}ds,\notag\\
		\leq& \big(\sup_{s\in [0,t]}\|\pt^2_{1}\varphi(s)\|^2_{L^2} \big)\int_{0}^{t}\|\nabla\pt^2_{1}\psi(s)\|^2_{L^2}ds
		+C\int_{0}^{t}\|\nabla\pt_t\psi(s)\|^2_{L^2}ds\notag\\
		\leq& N(t)\int_{0}^{t}\|\nabla\pt^2_{1}\psi(s)\|^2_{L^2}ds
		+C\int_{0}^{t}\|\nabla\pt_t\psi(s)\|^2_{L^2}ds,
	\end{align} 
	and other terms are treated similarly.
	
		Second, applying the operator $\pt_{1}$ on \eqref{eqnormal1} and \eqref{eqnormal2}, and then multiplying the resulting equations by $(2\mu+\mup)\pt_{1}\pt_{2}\varphi$ and $\rho^{-1}\pt_{1}\pt_{2}\varphi$, respectively, we obtain, by using \eqref{N}, \eqref{eneA0}, \eqref{eneA12}, \eqref{eneA3}, \eqref{enetan} and the Young inequality, that
	\begin{align}
	&\|\pt_{1}\pt_{2}\varphi(t)\|^2_{L^2}+\ep^{-2}\int_{0}^{t}\|\pt_{1}\pt_{2}\varphi(s)\|^2_{L^2}ds\notag\\
	\leq&C\|\pt_{1}\pt_{2}\varphi(0)\|^2_{L^2}+ C\ep^{2}\int_{0}^{t}\big(\|\nabla\pt^2_{1}\psi(s)\|^2_{L^2}+\|\psi(s)\|^2_{H^2}+\|\pt_{t}\psi(s)\|^2_{H^1}\big)ds\notag\\
	&+C\ep^{-2}\int_{0}^{t}\big(\|\varphi(s)\|^2_{H^1}+\|\theta(s)\|^2_{H^2}\big)ds\notag\\
	\leq&C\|\nabla\pt_{1}\varphi(0)\|^2_{L^2}+C\ep^2\|\pt^2_{1}\psi(0)\|^2_{L^2}
	+C\big(\ep^2A_3(0)+A_1(0)+A_2(0)+\ep^{-2}A_0(0)\big),   \label{enenor1}
	\end{align}
    provided that $\widehat{N}$ is small enough. Consequently, we derive from \eqref{N} and \eqref{eqnormal3} that 
    \begin{align}
    	(2\mu+\mup)\pt_{1}\pt_{2}\div\psi
    	=&O(\ep^{-2})(|\pt_{1}\pt_{2}\varphi|+|\pt_{1}\pt_{2}\theta|+|\nabla\varphi|+|\nabla\theta|)\notag\\
    	&+O(1)(|\pt_{1}\pt_t\psi|+|\nabla^2\psi|+|\nabla\psi|+|\psi|+|\nabla\pt^2_{1}\psi|),\notag
    \end{align}
	which, together with \eqref{eneA12}, \eqref{enetan} and \eqref{enenor1}, leads to
	\begin{align}
		\int_0^t\|\nabla\pt_{1}\div\psi(s)\|^2_{L^2}ds
		\leq& C\big(\|\nabla\pt_{1}\varphi(0)\|^2_{L^2}
		+\ep^2\|\pt^2_{1}\psi(0)\|^2_{L^2}\big)\notag\\
		&+C\big(\ep^2A_3(0)+A_1(0)+A_2(0)+\ep^{-2}A_0(0)\big).    \label{divtan}
	\end{align}
	
	Third, we consider the following elliptic system obtained from \eqref{Stokes2}:
	\begin{equation}
		\left.\begin{cases}
			\div (\pt_{1}\psi)=\pt_{1}g, \quad\quad\quad\quad\quad\text{in}\,\,\Omega,\\
			-\mu\Delta 	(\pt_{1}\psi)+\ep^{-2}\nabla (\pt_{1}P(\rho,\fT))=\pt_{1}F,
			\quad\quad\quad\text{in}\,\,\Omega,\\
			\pt_{1}\psi|_{x_2=0,1}=0,
		\end{cases}\label{Stokestan}
		\right.
	\end{equation}
    where the definition of $g$ and $F$ can be found in \eqref{gF}. 
    
    It follows from \Cref{Stokes lemma} that
    \begin{align}
    	&\int_{0}^{t}\big(\|\pt_{1}\psi(s)\|^2_{H^2}+\ep^{-4}\|\nabla\pt_{1}P(\rho,\fT)\|^2_{L^2}\big)ds\notag\\
    	\leq &C\int_{0}^{t}\big(\|\nabla\div\pt_{1}\psi(s)\|^2_{L^2}+\|\nabla\pt_{t}\psi(s)\|^2_{L^2}
    	+\|\psi(s)\|^2_{H^2}\big)ds,
    \end{align}
    which, together with \eqref{divtan}, \eqref{eneA3} and \eqref{eneA12}, gives that
     \begin{align}
    	&\int_{0}^{t}\big(\ep^{2}\|\pt_{1}\psi(s)\|^2_{H^2}
    	+\ep^{-2}\|\nabla\pt_{1}\varphi\|^2_{L^2}\big)ds\notag\\
    	\leq &C\ep^{2}\int_{0}^{t}\big(\|\nabla\div\pt_{1}\psi(s)\|^2_{L^2}
    	+\|\nabla\pt_{t}\psi(s)\|^2_{L^2}
    	+\|\psi(s)\|^2_{H^2}\big)ds+C\ep^{-2}\int_{0}^{t}\|\theta\|^2_{H^2}ds. \label{enetan2}
    \end{align}

    Next, applying the operator $\pt_{2}$ on both \eqref{eqnormal1} and \eqref{eqnormal2}, and then multiplying the resulting equations by $(2\mu+\mup)\pt_{2}\pt_{2}\varphi$ and $\rho^{-1}\pt_{2}\pt_{2}\varphi$, respectively, we obtain, by using \eqref{N}, \eqref{eneA0}, \eqref{eneA12}, \eqref{eneA3}, \eqref{enetan}, \eqref{enetan2} and the Young inequality, that
    	\begin{align}
    	&\|\pt^2_{2}\varphi(t)\|^2_{L^2}+\ep^{-2}\int_{0}^{t}\|\pt^2_{2}\varphi(s)\|^2_{L^2}ds\notag\\
    	\leq&C\|\pt^2_{2}\varphi(0)\|^2_{L^2}+ C\ep^{2}\int_{0}^{t}\big(\|\nabla^2\pt_{1}\psi(s)\|^2_{L^2}+\|\psi(s)\|^2_{H^2}+\|\pt_{t}\psi(s)\|^2_{H^1}\big)ds\notag\\
    	&+C\ep^{-2}\int_{0}^{t}\big(\|\varphi(s)\|^2_{H^1}+\|\theta(s)\|^2_{H^2}\big)ds\notag\\
    	\leq&C\|\nabla^2\varphi(0)\|^2_{L^2}+C\ep^2\|\pt^2_{1}\psi(0)\|^2_{L^2}
    	+C\big(\ep^2A_3(0)+A_1(0)+A_2(0)+\ep^{-2}A_0(0)\big),   \label{enenor2}
    \end{align}
     provided that $\widehat{N}$ is small enough. Consequently, we derive from \eqref{N} and \eqref{eqnormal3} that 
     \begin{align}
     	(2\mu+\mup)\pt^2_{2}\div\psi
     	=&O(\ep^{-2})(|\pt^2_{2}\varphi|+|\pt^2_{2}\theta|+|\nabla\varphi|+|\nabla\theta|)\notag\\
     	&+O(1)(|\pt_{2}\pt_t\psi|+|\nabla^2\psi|+|\nabla\psi|+|\psi|
     	+|\nabla^2\pt_{1}\psi|),\notag
     \end{align}
     which, together with \eqref{eneA12}, \eqref{enetan}, \eqref{enenor1} and \eqref{divtan}, leads to
     \begin{align}
     	\int_0^t\|\nabla^2\div\psi(s)\|^2_{L^2}ds
  	\leq& C\big(\|\nabla^2\varphi(0)\|^2_{L^2}+\|\pt^2_{1}\psi(0)\|^2_{L^2}\big)\notag\\
    	&+C\big(\ep^2A_3(0)+A_1(0)+A_2(0)+\ep^{-2}A_0(0)\big).    \label{divnor}
     \end{align}
 Finally, applying \Cref{Stokes lemma} to \eqref{Stokes2}, we obtain, by using \Cref{Poin} and \eqref{eneA0}, \eqref{eneA12}, \eqref{eneA3} and \eqref{divnor}, that
 \begin{align}
 	\int_0^t\big(\ep^{2}\|\psi(s)\|^2_{H^3}+\ep^{-2}\|\varphi(s)\|^2_{H^3}\big)ds
 	\leq& C\big(\|\nabla^2\varphi(0)\|^2_{L^2}+\|\pt^2_{1}\psi(0)\|^2_{L^2}\big)\notag\\
 	&+C\big(\ep^2A_3(0)+A_1(0)+A_2(0)+\ep^{-2}A_0(0)\big). \label{enenor3}
 \end{align}
 Moreover, using the analogous argument as in \Cref{lemma4}, we have
  \begin{align}
 	\int_0^t\ep^{-2}\|\theta(s)\|^2_{H^3}ds
 	\leq C\big(\ep^2A_3(0)+A_1(0)+A_2(0)+\ep^{-2}A_0(0)\big). \label{enetheta}
 \end{align}

   Thus, from \eqref{eneA4temp}, \eqref{enetan}, \eqref{enetan2}, \eqref{enenor2}, \eqref{enenor3} and \eqref{enetheta}, we conclude \eqref{eneA4}.

 The proof is completed.
\end{proof}	
	
	Then, we state $\ep$-weighted $H^3$-type estimates on $(\varphi,\psi,\theta)$ as follows.
	\begin{lemma} \label{lemmaH3}
		Suppose that \eqref{chi} holds, and suppose that $\Re<\Re_0$ and $\ep\leq\vep_2$ as in Lemma \ref{lemma3}. Then, there exists a positive constant $N_2$ depending only on $\Re$, $\Pr$, $\frac{\mup}{\mu}$ and $\gamma$, such that if $\Nhat\leq N_2$, then we have
		\begin{align}
			\ep^{4}A_5(t)\leq C\big(\ep^{4}A_5(0)+\ep^2A_3(0)+\ep^2A_4(0)+A_1(0)+A_2(0)+\ep^{-2}A_0(0)\big), \,\,\forall t\in[0,T]. \label{eneA5}
		\end{align}
	\end{lemma}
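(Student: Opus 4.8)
The plan is to carry the bootstrap of \Cref{lemmaH2temp,lemmaH2} one differentiation order higher, using the lower-order bounds \eqref{eneA0}, \eqref{eneA12}, \eqref{eneA3} and \eqref{eneA4} together with the smallness hypothesis $\Nhat\le N_2$ to absorb every genuinely nonlinear top-order term. First I would treat the time-derivative part of $A_5$: differentiate \eqref{eq2}--\eqref{eq3} once in $t$ and test the resulting equations against $\pt_t^2\psi$ and $\pt_t^2\theta$, respectively. Exactly as in \eqref{pt2}--\eqref{pt3}, integration by parts produces, after time integration, control of the sup-norms $\|\nabla\pt_t\psi\|_{L^2}$ and $\ep^{-1}\|\nabla\pt_t\theta\|_{L^2}$ together with the time integrals $\int_0^t(\|\pt_t^2\psi\|^2_{L^2}+\ep^{-2}\|\pt_t^2\theta\|^2_{L^2})ds$; the pressure contribution is disposed of by the time-differentiation/integration-by-parts device of \eqref{pt4}--\eqref{pt6}, and the lower-order and convective terms are bounded by $\ep^{2}A_3+\ep^{2}A_4+A_1+A_2+\ep^{-2}A_0$ plus data.

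Next I would handle the spatial third derivatives. For the tangential ones, apply $\pt_1^3$ to \eqref{eqf1}--\eqref{eqf2}; since $\pt_1\trho=\pt_1\tu=0$ and $\div\tu=0$ the commutators are of lower order, and testing against $\ep^{-2}\pt_1^3\varphi$ and $\pt_1^3\psi$ gives, as in \eqref{enetan}, control of $\sup_{[0,t]}\|(\ep^{-1}\pt_1^3\varphi,\pt_1^3\psi)\|^2_{L^2}$ and of $\int_0^t(\|\nabla\pt_1^3\psi\|^2_{L^2}+\|\pt_1^3\div\psi\|^2_{L^2})ds$, the top-order cubic terms such as $\int\pt_1^3\varphi\,\pt_t\psi\cdot\pt_1^3\psi\,dx$ being absorbed exactly by the Gagliardo--Nirenberg/smallness mechanism displayed inside the proof of \eqref{enetan}. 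For the normal derivatives, apply $\pt_1^2$, $\pt_1\pt_2$ and $\pt_2^2$ to the structural relations \eqref{eqnormal1}--\eqref{eqnormal2} for $\pt_2^2\psi^2$ and $\pt_2\varphi$, multiply by the corresponding $(2\mu+\mup)$- and $\rho^{-1}$-weighted third derivatives of $\varphi$, and integrate by parts as in \eqref{enenor1}--\eqref{enenor2}, and then use \eqref{eqnormal3} to convert the resulting control into bounds for $\nabla^2\pt_2\div\psi$ in $L^2_t$. Finally, feeding Steps above into \Cref{Stokes lemma} and \Cref{Lame lemma} applied to the $\pt_1^2$-differentiated Stokes and Lam\'{e} systems \eqref{Stokes2}, \eqref{elliptic} produces the sup-bounds $\ep^{-2}\|\nabla^3\varphi\|^2_{L^2}$ and $\|\nabla^3\psi\|^2_{L^2}$, while the elliptic theory \cite{ADN1959,ADN1964} applied to \eqref{ellptictheta}, whose right-hand side is now controlled in $H^1$ by $A_4$ and the first step, yields $\ep^{-2}\|\nabla^3\theta\|^2_{L^2}$. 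Collecting these with the appropriate $\ep$-weights gives \eqref{eneA5}.

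The main obstacle is the absorption of the top-order nonlinearities in the tangential and normal estimates: these are quadratic in third derivatives (or in one third derivative and one $\pt_t$-derivative), hence are not dominated by the linear dissipation, and can be controlled only by pulling out of the time integral a $\sup_{[0,t]}$-norm bounded by $N(t)$ and then invoking $\Nhat\le N_2$ -- this is precisely why the a priori hypothesis on $\Nhat$ is needed. A secondary but essential point is that $\pt_t^2\psi$ and $\pt_t^2\theta$ are not reachable by any elliptic estimate, so they may appear only as dissipation terms in time-differentiated energy identities; this dictates that the first step precede the elliptic closure, since, for instance, the elliptic bound for $\|\nabla^3\psi\|_{L^2}$ uses the already-established bound for $\|\nabla\pt_t\psi\|_{L^2}$.
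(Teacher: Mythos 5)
Your proposal is correct and follows essentially the same route as the paper, which proves \Cref{lemmaH3} by repeating the argument of \Cref{lemmaH2temp,lemmaH2} one derivative order higher (the paper omits the details for exactly this reason). Your plan — time-differentiated energy identities tested against $\pt_t^2\psi$, $\pt_t^2\theta$ for the $\nabla\pt_t$-sup and $\pt_t^2$-dissipation parts of $A_5$, tangential $\pt_1^3$ estimates, the normal-derivative bootstrap via \eqref{eqnormal1}--\eqref{eqnormal3}, closure by the Stokes/Lam\'{e} and elliptic estimates, and absorption of top-order nonlinearities via $\Nhat\le N_2$ — is precisely the intended argument.
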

   \begin{proof}
   	This lemma can be proved by using a similar argument as in the proof of \Cref{lemmaH2temp,lemmaH2}. The details are omitted here.
   \end{proof}

\section{Proof of the main theorems}
In this section we prove \Cref{main1,main2}. 

\subsection{Proof of \Cref{main1}}
To prove \Cref{main1}, we will follow standard arguments for problems with small data as in \cite{MN1980,MN1983}. Thus, we only give a sketch of proof as follows.

{\bf \hspace{-1em}Proof of \Cref{main1}.}
Let $\Re^\prime$, $\vep^\prime$, $N^\prime$ and $\hat{C}$ be the same as in \Cref{priori}. By \Cref{local}, there exist a time $T_*>0$ and a unique strong solution $(\varphi, \psi,\theta)$ to the initial-boundary value problem \eqref{eq1}--\eqref{eq5} on $(0,T_*)\times\Omega$ such that \eqref{local0} holds. 

Let $N(t)$ be defined by \eqref{N}, and suppose that 
\begin{align}
	N(0)\leq \max\{\frac{1}{4},\frac{1}{8}\hat{C}^{-1}\}N^\prime.  \label{prove2}
\end{align}
Then, due to \eqref{local0} there exists a time $t_1\in (0,T_*]$ such that
\begin{align}
	\sup_{t\in(0,t_1)}N(t)\leq 2N(0) \leq \frac{1}{2}N^\prime. \label{prove1}
\end{align}
Thus, it follows from \eqref{prove1} and \Cref{priori} that 
\begin{align}
	\sup_{t\in(0,t_1)}N(t)\leq \hat{C}N(0),
\end{align}
which, together with \eqref{prove2}, leads to 
\begin{align}
	\sup_{t\in(0,t_1)}N(t)\leq \frac{1}{4}N^\prime.
\end{align}

Next, we can solve the problem \eqref{eq1}--\eqref{eq5} in $t\geq t_1$ with initial data $(\varphi(t_1),\psi(t_1),\theta(t_1))$ again, and by uniqueness we can extend the solution $(\varphi,\psi,\theta)$ to $[0,2t_1]$. Therefore, we can continue the above argument and the same process for $0\leq t\leq nt_1$, $n=2,3,4,\cdots$ and finally obtain a global unique strong solution $(\varphi,\psi,\theta)$ satisfying \eqref{priori} for any $t>0$. Let $(\rho,u,\fT)=(\trho+\varphi,\tu+\psi,\tfT+\theta)$. It can be seen that $(\rho,u,\fT)$ is indeed a global unique strong solution to the original problem \eqref{2DCNS0}--\eqref{initialcon1} such that \eqref{regularity} and \eqref{uniform} hold. 

Finally, the large time behavior \eqref{largetime} can be shown by using the Sobolev embedding theorem and the fact from \eqref{priori} that $(\varphi,\psi,\theta)\in H^1\big([0,\infty);H^2(\Omega)\big)$.

The proof of is completed.                                   \,\,\,$\hspace{25em}\Box$

\subsection{Proof of \Cref{main2}}
Now, we are ready to prove \Cref{main2}.

{\bf \hspace{-1em}Proof of \Cref{main2}.}
The condition \eqref{same} implies that \eqref{chi} holds. Therefore, \Cref{main1} guarantees the global existence of strong solution $(\rho^\ep,u^\ep,\fT^\ep)$ to \eqref{2DCNS0}--\eqref{initialcon1}, which satisfies \eqref{regularity} and \eqref{uniform}, i.e., it holds that
\begin{equation}
	\begin{aligned}
		\|\rho^\ep-\trho\|_{L^2}=O(\ep^2),
		\quad \|u^\ep-\tu\|_{L^2}=O(\ep),
		\quad \|\fT^\ep-\tfT\|_{L^2}=O(\ep^2).
	\end{aligned}  \label{converge1}
\end{equation}
Moreover, it is observed from \eqref{Couette1} and \eqref{same} that 
\begin{equation}
	\begin{aligned}
		\|\trho-1\|_{L^2}=O(\ep^2),
		\quad \|\tfT-1\|_{L^2}=O(\ep^2).\label{converge2}
	\end{aligned} 
\end{equation}
Combining \eqref{converge1} and \eqref{converge2} gives \eqref{converge0}.

The proof is completed.
\,\,\,$\hspace{25em}\Box$\\


\noindent {\bf Acknowledgments:}
The authors are supported by the NSFC (Grants 12071044 and 12131007).


\end{document}